\documentclass{amsart}
\usepackage{graphicx} 
\usepackage{todonotes} 

\usepackage{graphicx} 
\usepackage{amsmath}
\usepackage{amstext}
\usepackage{mathtools}
\usepackage{amssymb}
\usepackage{textcomp}
\usepackage{listings}
\usepackage{graphicx}
\usepackage{physics}
\usepackage{relsize}
\usepackage{exscale}
\usepackage{amsthm}
\usepackage{amsthm,amsmath}
\usepackage{amsmath,mathdots}
\usepackage{bbm}
\usepackage{mathrsfs}
\usepackage{tikz-cd}
\usepackage{colonequals}
\usepackage{centernot}
\usepackage{thmtools, thm-restate}
\usepackage{refcount}
\newcommand{\s}[1]{\textup{\textsf{#1}}}

\renewcommand{\L}{\langle L,<\rangle}
\newcommand{\Rr}{\langle \R,<\rangle}

\newcommand{\Lzero}{\langle L_0,<_0\rangle}
\newcommand{\Lone}{\langle L_1,<_1\rangle}
\newcommand{\Ln}{\langle L_n, <_n\rangle}

\newcommand{\R}{\mathbb{R}}

\newcommand{\Z}{\mathbb{Z}}

\newcommand{\im}{\mathbin{\hbox{\tt\char'42}}}
\newcommand{\cf}{\text{cf}}

\newcommand{\dom}{\text{dom}}

\newcommand{\twoalphalex}{\langle {}^\alpha 2,<_{\operatorname{lex}}\rangle}
\newcommand{\twoalpha}{{}^\alpha 2}
\newcommand{\lessalpha}{{}^{<\alpha}2}
\newcommand{\twoomegalex}{\langle {}^\omega 2,<_{\operatorname{lex}}\rangle}

\newcommand{\twokappalex}{\langle {}^\kappa 2, <_{\operatorname{lex}}\rangle}
\newcommand{\twokappa}{{}^\kappa 2}
\newcommand{\lex}{<_\text{lex}}
\newcommand{\ccint}{\mathbf{cc}^\zeta}
\newcommand{\ccomega}{\mathbf{cc}^\omega}
\newcommand{\ccomegastar}{\mathbf{cc}^{\omega^*}}
\newcommand{\twoomegaonelex}{\langle {}^{\omega_1} 2,<_{\operatorname{lex}}\rangle}
\newcommand{\otp}{\textnormal{otp}}

\renewcommand{\dom}{\textnormal{dom\,}}
\newcommand{\D}{\mathscr D}

\newcounter{thmcount}

\newtheorem{prop}[thmcount]{Proposition}
\newtheorem{defn}[thmcount]{Definition}
\newtheorem{lemma}[thmcount]{Lemma}

\newtheorem{obs}[thmcount]{Observation}

\theoremstyle{definition}

\newcounter{claimcount}
\numberwithin{claimcount}{thmcount}
\newtheorem{claim}[claimcount]{Claim}
\newcommand*{\claimproofname}{Proof of claim}
\newenvironment{claimproof}[1][\claimproofname]{\begin{proof}[#1]}{\end{proof}}
\newtheorem{subfact}[claimcount]{Fact}
\newcounter{questioncount}
\newtheorem{q}[questioncount]{Question}
\newtheorem{problem}[questioncount]{Problem}

\counterwithin{thmcount2}{section}

\newtheorem{thm*}{Theorem}

\usepackage{cite}
\usepackage[shortlabels]{enumitem}
\usepackage[new]{old-arrows}
\usepackage{slashed}

\newcounter{scratch}

\definecolor{thilo}{cmyk}{0.1,0,0.8,0}

\makeatletter
\@namedef{subjclassname@2020}{
\textup{2020} Mathematics Subject Classification}
\makeatother

\title[IEPRs on Higher Analogues of the Real Line]{Infinite-Exponent Partition Relations \\on Higher Analogues of the Real Line}
\author{Lyra A.\ Gardiner}
\address{\textnormal{Lyra A.\ Gardiner}\\Department of Pure Mathematics and Mathematical Statistics \& Trinity College, University of Cambridge}
\email{lag44@cam.ac.uk}

\author{Jonathan Schilhan}
\address{\textnormal{Jonathan Schilhan}, University of Vienna,
Institute of Mathematics,
Kurt Gödel Research Center,
Kolingasse 14-16,
1090 Vienna,
Austria}
\email{jonathan.schilhan@univie.ac.at}

\author{Thilo Weinert}
\address{\textnormal{Thilo Weinert}}
\email{thilo.weinert@univie.ac.at}

\date{}

\subjclass[2020]{Primary 03E02; Secondary 06A05, 03E25, 05D10}
\keywords{Linear orders, partition relations, Ramsey theory, Axiom of
Choice}

\begin{document}
\maketitle


We present a number of results concerning infinite-exponent partition relations (IEPRs) on linear orders of the form $\twoalphalex$ for $\alpha$ an ordinal, generalising the setting of the real line (i.e.\ the case $\alpha = \omega$), working throughout in \s{ZF}. As a particular consequence of our results, we obtain a full classification of the relation $\twoalphalex \rightarrow (\tau)^\tau$ for $\tau$ countable.
\section{Introduction}\label{introduction:section}
\subsection{Background and main results}\label{background:subsection}
This paper is a sequel to the first author's \cite{ieprsonr}, in which infinite-exponent partition relations on arbitrary linear orders were introduced and a number of results were proved for the setting of the real number line $\Rr$. In this paper we expand our scope to ``higher analogues of the reals", by which we mean linear orders of the form $\twoalphalex$ for $\alpha$ an arbitrary infinite ordinal, proving a number of results in this setting by combining ideas from that paper with ideas from the third author's \cite{crtolo}. The majority of the results in this paper were obtained during a visit by the first and second authors to the third author in Udine in March 2024, and a subsequent visit by the first author to the second author in Vienna in September 2025.

Our base theory throughout this paper is \s{ZF} without the Axiom of Choice, as IEPRs on linear orders are either false or trivial in a certain sense under \s{AC}; see \cite[\S 1.3]{ieprsonr} for a discussion of this. Our main results are the following; they are all stated in terms of the \emph{arrow notation} for partition relations, defined in \S\ref{terminologynotation:subsection}.
\begin{restatable}{thm}{tauplustau}\label{tauplustau:thm}
    Let $\tau \ne 0$ be an order type with $\tau + \tau \le \tau$. Then for any ordinal $\alpha$, \[\twoalphalex\centernot \rightarrow (\tau)^\tau.\]
\end{restatable}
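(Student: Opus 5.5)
The plan is to define, without any use of choice, one colouring $c\colon[{}^\alpha 2]^\tau\to 2$ that has no homogeneous set of type $\tau$. It should read off a bit of information from the tree structure of ${}^\alpha 2$. For distinct $x,y\in{}^\alpha 2$ let $\Delta(x,y)$ be the least $\xi<\alpha$ with $x(\xi)\neq y(\xi)$. Given $X\subseteq{}^\alpha 2$ of order type $\tau$, note that $X$ has at least two elements, since $\tau\neq0$ and $\tau+\tau\le\tau$ rule out $\tau=1$. Put
\[\beta(X)=\min\{\Delta(x,y):x\neq y\in X\}.\]
This minimum exists because ordinals are well-ordered, so no choice is needed. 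All elements of $X$ agree below $\beta(X)$. Hence $X$ splits at level $\beta(X)$ as $X=X_0\cup X_1$, where $X_i=\{x\in X:x(\beta(X))=i\}$. Both parts are nonempty, and $X_0<_{\operatorname{lex}}X_1$ is a convex cut of $X$. I would colour by the nature of this cut: $c(X)=0$ iff $X_0$ has a largest element. If that choice proves awkward, a fallback is $c(X)=0$ iff $\tau$ embeds into $X_0$. Either way the colouring is defined outright from $X$ and the order, so the argument works in \s{ZF}.

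Next I record the monotonicity of $\beta$. If $Y\subseteq X$ and $Y$ meets both $X_0$ and $X_1$, then $\beta(Y)=\beta(X)$, $Y_0=Y\cap X_0$ and $Y_1=Y\cap X_1$. So the cut of $Y$ is simply the restriction of the cut of $X$, and the colour of such a $Y$ depends only on how $Y$ sits relative to the fixed cut $(X_0,X_1)$.

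Now suppose towards a contradiction that $H$ has type $\tau$ and is homogeneous. Since $\tau+\tau\le\tau$, iterating gives $\tau\cdot n\le\tau$ for every $n<\omega$. So I can pass to some $Z\subseteq H$ of type $\tau\cdot n$ with $n$ chosen large (say $4$), written as $Z=A^1<\dots<A^n$ with each $A^i$ of type $\tau$. Let $\gamma=\beta(Z)$ and let $(Z_0,Z_1)$ be its cut. The aim is to find $X,X'\subseteq Z$, each of type $\tau$, meeting both sides of the cut, such that $X\cap Z_0$ has a maximum and $X'\cap Z_0$ does not. To build these I would take unions of whole blocks $A^i$ lying strictly on either side of the cut, adjusted by a single point or a final segment near the cut. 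The leftover type is then absorbed using $\tau+\tau\le\tau$, together with the observation that every $\tau\cdot k$ contains a copy of $\tau$ as a suborder. With both $X$ and $X'$ in hand, $c(X)\neq c(X')$, contradicting homogeneity.

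The main obstacle is exactly this last step. The cut $(Z_0,Z_1)$ may fall inside a block, and $\tau$ is an arbitrary order type. So I need to guarantee that I can produce type-$\tau$ subsets with and without a last element on the left side, without changing the overall order type. Appending one extra point, for instance, can turn $\tau$ into $\tau+1\not\cong\tau$. My first attempt will be to pick $X$ and $X'$ of the form $B\cup B'$, with $B$ a block (or block-union) entirely in $Z_0$ and $B'$ one entirely in $Z_1$. Such a set has type $\tau\cdot 2$, which contains $\tau$, but I must choose the copy of $\tau$ inside it carefully so that it still meets both sides. If controlling maxima proves intractable for general $\tau$, I would switch to the fallback colouring ``$\tau$ embeds into $X_0$''. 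For that colouring the left piece can be taken to be a whole block, which gives colour $0$, or, by arranging the cut to lie just after a small initial piece, a piece not containing $\tau$, which gives colour $1$. I would then redo the case analysis on where the cut falls among $A^1,\dots,A^n$.
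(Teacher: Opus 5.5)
Your primary colouring ($c(X)=0$ iff $X_0$ has a largest element) is a good choice, but the proposal never shows it has no homogeneous set. The step you flag as the main obstacle is the entire proof, and the route you sketch through it does not work.

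Nothing in the choice of $Z\subseteq H$ of type $\tau\cdot n$ controls where the top split of $Z$ falls. That split is dictated by the tree, not by the blocks $A^i$, and it can be completely lopsided. For example, $\tau=\eta+1$ satisfies $\tau+\tau\le\tau$. Suppose the top split of $H$ isolates $\max H$; then the same holds for every $Z\subseteq H$ containing $\max H$. No block lies in $Z_1$. Every $Y\in[Z]^\tau$ meeting both sides contains $\max Z$ and has $Y_0=Y\setminus\{\max Y\}\cong\eta$, so it gets colour $1$. A set of colour $0$ must therefore be sought inside $Z_0$, whose own top split is again uncontrolled, and taking $n$ larger changes nothing.

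The missing idea is to choose the splitting node adaptively. Every $A\in[{}^\alpha 2]^\tau$ has a node $s$ such that both $A^0_s=\{x\in A:s^\frown\langle 0\rangle\sqsubseteq x\}$ and $A^1_s$ embed $\tau$ (a $\tau$-splitting node). For arbitrary $\tau$ this needs a genuine argument: if you follow whichever side still embeds $\tau$, you must show this never dies out at a limit level. The paper gets it from the density of the condensation $A/{\sim_\tau}$ and the minimal meet $\Delta(x,y)$ over non-extremal classes.

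Once you have such an $s$, your worry about appending a point disappears if you cut $\tau$ itself rather than gluing blocks. Write $\tau=\tau_0+\tau_1$ with both parts nonempty, and put a copy of $\tau_0$ into $A^0_s$ and a copy of $\tau_1$ into $A^1_s$ (possible since $\tau_i\le\tau$). The union has type exactly $\tau$, its least splitting level is $h(s)$, and its left part is $\cong\tau_0$.
\begin{itemize}
\item Cutting just after a non-maximal point makes $\tau_0$ have a largest element.
\item For the other colour, take copies $D_0<D_1<\dots<E$ of $\tau$ inside $\tau$. These exist since $\tau\cdot\omega+\tau\le\tau$, which you get by iterating an embedding of $\tau+\tau$. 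Letting $\tau_0$ be the downward closure of $\bigcup_n D_n$ makes it have no largest element.
\end{itemize}
So both colours occur among the type-$\tau$ subsets of every $A$, and your colouring does work. It needs only a single $\tau$-splitting node, instead of the paper's tree $f_A$ and its well-foundedness argument comparing $h(f_A(\langle0\rangle))$ with $h(f_A(\langle1\rangle))$.

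Your fallback colouring, on the other hand, is simply wrong. For $\tau=\eta$, $X_0$ is a nonempty initial segment of $X\cong\eta$ and therefore always embeds $\eta$. So ``$\tau$ embeds into $X_0$'' is constantly $0$ on $[{}^\alpha 2]^\eta$. You cannot arrange where the cut falls, only which subset you take.
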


\begin{restatable}{thm}{omegaomegastar}\label{omegaomegastar:thm}
    Let $\alpha$ be an ordinal and $\tau$ a scattered, well-orderable order type. If $\omega\omega^* \le \tau$ or $\omega^*\omega \le \tau$ then
    \[\twoalphalex \centernot \rightarrow (\tau)^\tau.\]
\end{restatable}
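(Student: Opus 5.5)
The plan is to build a colouring $c\colon [{}^\alpha 2]^\tau \to 2$ from the tree structure of ${}^\alpha 2$, using the splitting function $\Delta(x,y)=\min\{\xi : x(\xi)\neq y(\xi)\}$ and the fact that $\twoalphalex$ is complete, so every monotone $\omega$- or $\omega^*$-sequence has a supremum or infimum. I treat the case $\omega\omega^*\le\tau$; the case $\omega^*\omega\le\tau$ should follow by the symmetric argument, applying the order-reversing map $x\mapsto 1-x$, which is an anti-automorphism of $\twoalphalex$.

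First I would reduce to colouring copies of $\omega\omega^*$. Fix a well-order of $\tau$ and an embedding $e\colon\omega\omega^*\to\tau$. A set $X$ of type $\tau$ is well-orderable, since it is in bijection with $\tau$. The difficulty is that the isomorphism $X\cong\tau$ need not be unique, because scattered types can have automorphisms. I would handle this in one of two ways. One is to choose the colouring so that it depends only on data that is invariant under automorphisms of $\tau$. The other is to use $\Delta$ to pick out canonically a particular $\omega\omega^*$-subconfiguration of $X$. I expect this canonicity issue to be one of the two places needing real care.

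Given a copy $Y\subseteq X$ of $\omega\omega^*$ with blocks $B_0<B_1<\cdots$ of type $\omega^*$, let $\ell_n=\inf B_n$. Then $\ell_0<\ell_1<\cdots$ and $B_n\subseteq(\ell_n,\ell_{n+1})$. Put $d_n=\Delta(\ell_n,\ell_{n+1})$ and $e_n=\sup_k\Delta(b_{n,k},\ell_n)$, where $b_{n,k}$ enumerates $B_n$ in decreasing order. The ultrametric property of $\Delta$ gives $d_n\ne d_{n+1}$. It also gives $\Delta(\ell_n,\ell_m)=\min_{n\le i<m}d_i$, so deleting blocks replaces the $d$-sequence by running minima. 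A colour of the form ``$d_0>d_1$'' defeats homogeneity in one direction: shifting the copy forward in a homogeneous set would produce an infinite descending sequence of ordinals. It does not defeat the colour ``$d_0<d_1$'', since deleting blocks preserves it.

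So the colour must mix the $d_n$ with the internal data $e_n$ of the $\omega^*$-blocks, and this is where $\omega^*$ is genuinely used. By thinning a block $B_n$ to a tail one can change nothing about $\ell_n$, but one can move $B_n$'s maximum. By passing from $B_n$ to a sub-$\omega^*$-sequence converging to a different point one can instead force the blocks to interleave differently with respect to $\Delta$. I would try a colour such as ``$e_0<d_0$'' or a parity-type comparison between $e_0$, $d_0$ and $e_1$. I would then show that from any homogeneous $H$ of type $\tau$ one can extract two subcopies, one inside each $B_n$ and one straddling the $\ell$'s, that receive different colours.

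The main obstacle is finding the right colour. It must be definable without choice and invariant under the non-unique identification of $X$ with $\tau$. It must also be flippable by such a surgery inside every homogeneous set, which requires a case analysis on how $e_n$ compares with $d_{n-1}$ and $d_n$ in ${}^\alpha 2$.
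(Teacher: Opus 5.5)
\textbf{There is a genuine gap.} Your proposal stops short of a proof at exactly the two points you flag yourself, and these two points are the whole content of the theorem.

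First, no colour is ever fixed or verified. The one concrete candidate, ``$e_0<d_0$'', is constant. Since $\ell_0<b_{0,k}\le\ell_1$, the ultrametric inequality gives $\Delta(b_{0,k},\ell_0)\ge\Delta(\ell_0,\ell_1)=d_0$ for every $k$ (your $\Delta$ is the paper's $\delta$). Hence $e_0\ge d_0$ always. For the ``parity-type'' comparisons you give no surgery that flips them inside an arbitrary candidate homogeneous set.

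Second, the passage from $\tau$ to $\omega\omega^*$ is not just a canonicity issue. There is no monotonicity in the exponent, and an embedded copy of $\omega\omega^*$ need not be convex in a copy $X$ of $\tau$. Shrinking it to a subcopy can change the order type of $X$, or change which configuration your canonical choice selects. You would need something like a coherent selector (Definition~\ref{coherentselector:defn}) with dense domain, and you do not construct one. A minor point: with the paper's colexicographic convention, $\omega$-many blocks of type $\omega^*$ is $\omega^*\omega$; this is harmless by symmetry.

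The paper avoids both problems by using a colour defined intrinsically on sets: $C(A)=1$ iff some $s\in\lessalpha$ has $A\cap[s]$ equal to a two-element set lying inside an infinite condensation class of $A$.

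Every $\mathscr F(A')$ has colour $0$, so everything outside a convex piece of $\tau$ can be neutralised by canonising (Claim~\ref{inhomogoninterval:claim}). Together with Laver's finite decomposition into indecomposables, Jullien's dichotomy, induction on Hausdorff rank and Lemma~\ref{finitesumchar:lemma}, this reduces the problem to $\tau$ strictly indecomposable to the right with cofinally many intervals of type $\omega^*$.

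A canonised $A$ is then flipped to colour $1$ by an absorption move:
\begin{enumerate}
\item Take consecutive $x_0<x_1$ in an $\omega^*$-interval with $\delta(x_0,x_1)$ minimal over all such intervals.
\item Take consecutive $y_{-1}<y_0<y_1$ in a later $\omega^*$-interval with $\delta(y_0,y_1)$ minimal there.
\item Set $A'=(A\cap(\leftarrow,x_1))\cup\{y_{-1},y_0\}\cup A_1'$, where $A_1'$ is a copy of $A\cap[x_1,\rightarrow)$ placed above $y_0$ using right indecomposability.
\end{enumerate}
Since $A\cap(\leftarrow,x_1)$ ends in an $\omega^*$-tail and $\omega^*+2=\omega^*$, $A'$ still has type $\tau$. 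Minimality of $\delta(x_0,x_1)$ makes $y_{-1},y_0$ the only points of $A'$ extending $\Delta(y_{-1},y_0)$, and they lie in the infinite condensation class of $x_0$.

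This intrinsic colour and this absorption move are the missing ideas. Comparisons of split levels like the ones you sketch do not supply them.
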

\begin{restatable}{thm}{similartor}\label{similartor:thm}
    For $\alpha$ a countable ordinal, $\tau$ a countably infinite order type, if $\tau$ is not of the form $\omega + k$ or $k + \omega^*$ for $k$ a natural number then
    \[\twoalphalex \centernot\rightarrow (\tau)^\tau,\]
    and moreover, if $\tau$ is of this form, then
    \[\twoalphalex \rightarrow (\tau)^\tau \iff \omega \rightarrow (\omega)^\omega.\]
\end{restatable}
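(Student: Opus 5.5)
We reduce to the two earlier theorems together with a case analysis on countable order types, plus a direct comparison with $\omega$ for the exceptional types. Recall that $\twoalphalex \rightarrow (\tau)^\tau$ means: for every colouring of the $\tau$-type subsets of $\twoalpha$ with two colours there is a homogeneous subset of type $\tau$. This relation is downward-inherited in the source order: if $\twoalphalex$ embeds into $L$, then failure for $L$ follows from failure for $\twoalphalex$ by restricting colourings. It also has a ``hereditary in $\tau$'' character, which we use only through Theorems~\ref{tauplustau:thm} and~\ref{omegaomegastar:thm}.

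\textbf{Case split.} First suppose $\tau$ is countably infinite and not scattered. Then $\eta \le \tau$, and $\tau$ contains a copy of $\eta$. We would like to invoke Theorem~\ref{tauplustau:thm} for $\eta$ itself ($\eta+\eta\le\eta$), but we need $\tau$ itself.

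Here we use the Hausdorff decomposition. A non-scattered countable $\tau$ is a sum $\sum_{q\in\eta}\tau_q$ of scattered types after condensing. Our plan is to produce a countable type $\sigma$ with $\sigma+\sigma\le\sigma$ and to argue that a homogeneous $\tau$-set yields what Theorem~\ref{tauplustau:thm}'s colouring forbids. This is cleaner if we inspect that theorem's proof: the standard colouring there compares the set with a ``shifted'' copy of itself. We therefore expect the theorem's proof to adapt to any $\tau$ such that $\tau$ embeds into a proper initial or final part of itself in a nontrivial way. So for non-scattered $\tau$ we first try to find an embedding $\tau\le\tau'$ where $\tau'+\tau'\le\tau'$ and $\tau'\le\tau$, using $\eta\cdot\tau\le\tau$. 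In fact every countable non-scattered $\tau$ satisfies $\tau\le\eta$ and $\eta\le\tau$. Next we argue that a failure witnessed for $\eta$ (from Theorem~\ref{tauplustau:thm}) transfers to $\tau$. This is done by colouring a $\tau$-subset according to the colour of its canonically chosen $\eta$-subset. The canonical choice is possible without choice because subsets of $\twoalpha$ with $\alpha$ countable have a definable countable enumeration-free structure (we use the lexicographic order to pick, e.g., the subset of points whose condensation class is non-singleton). This canonical selection is the step I expect to be the main obstacle, and I would first try the Hausdorff condensation, which is choice-free.

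For scattered countable $\tau$, the type is well-orderable because it is countable. If $\omega\omega^*\le\tau$ or $\omega^*\omega\le\tau$, Theorem~\ref{omegaomegastar:thm} applies. Otherwise, by a Hausdorff rank argument, $\tau$ is a finite sum of types from $\{1,\omega,\omega^*\}$ with no $\omega$ followed later by $\omega^*$ beyond finite gaps. So $\tau = k + \omega^* + \ldots$ or a similar sum, and the remaining types are $m+\omega^*+\omega+n$, $\omega+k$, $k+\omega^*$, or $\omega+k+\omega^*$-free combinations. The types containing both $\omega^*$ and $\omega$ (as $\omega^*\omega$ up to finite pieces), as well as $\omega+\omega$, must be excluded by Theorem~\ref{tauplustau:thm}-style colourings. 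For $\omega\cdot 2\le\tau$, i.e.\ $\omega+\omega$, we note that $\omega+\omega$ satisfies the hypotheses of an adapted version of the shift colouring. $\omega^*+\omega$ is handled by Theorem~\ref{omegaomegastar:thm} via $\omega^*\omega\ge\omega^*+\omega$; we must check carefully whether the theorem actually covers it, and if not, construct the colouring directly by comparing the limit point from both sides.

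For the exceptional types $\omega+k$, we take the forward direction first. Given $c:[\omega]^\omega\to2$, we fix a copy of $\omega+k$ in $\twoalpha$ (a definable increasing sequence with $k$ points above it) and pull back $c$. Conversely, given $c$ on $[\twoalpha]^{\omega+k}$, we note that every $(\omega+k)$-subset is determined by its $\omega$-part and final $k$ points. We colour an $\omega$-subset of $\omega$ after coding $\omega$ into $\twoalpha$ via a well-ordered sequence of length $\omega$ converging to a point with room above it. Then $\omega\rightarrow(\omega)^\omega$ yields homogeneity. The case $k+\omega^*$ is symmetric by reversing $\twoalpha$.
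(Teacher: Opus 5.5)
Your proposal has genuine gaps in both parts.

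\textbf{The scattered case.} Your classification of the residual types is wrong. By Lemma~\ref{finitesumchar:lemma}, a countable scattered $\tau$ with $\omega\omega^*,\omega^*\omega\not\le\tau$ is a finite sum of \emph{arbitrary} countable ordinals and reverse ordinals. Examples are $\omega^2$, $\omega^\omega+\omega^*$, or $\omega+\omega^*$ (which you exclude); these are not merely sums of copies of $1$, $\omega$, $\omega^*$.

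More seriously, your tools cannot reach these types:
\begin{itemize}
\item Theorem~\ref{omegaomegastar:thm} applies to none of them. In particular it does not apply to $\zeta$, since $\omega^*\omega\not\le\zeta$.
\item Theorem~\ref{tauplustau:thm} needs $\tau+\tau\le\tau$, which forces $\tau$ to be non-scattered.
\item Any colouring modelled on the shift colouring of Theorem~\ref{tauplustau:thm} makes no use of $\alpha$ being countable. But by Theorem~\ref{affordable:thm}, $\twoomegaonelex\rightarrow(\tau)^\tau$ is consistent (e.g.\ under \s{AD}) for each such $\tau$, including $\omega+\omega$ and $\zeta$.
\end{itemize}
So countability of $\alpha$ must be used, and that is the missing idea. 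Fix $\mathscr B:\alpha\to\omega$, so split levels become natural numbers $\mathscr N(x,y)=\mathscr B(\delta(x,y))$, and split by condensation classes rather than by scatteredness.
\begin{itemize}
\item If $\tau$ has two condensation classes of type $\omega$ or $\omega^*$ (as every countable ordinal $\ge\omega\cdot2$ does), canonise and pick out two such classes coherently. Colour by comparing $\mathscr N$ at the first split of each. Deleting finitely many points from one class makes its value arbitrarily large while fixing the other (Lemmas~\ref{twomutuallycoherent:lemma} and~\ref{twoomegageneral:lemma}).
\item A $\zeta$-class is handled likewise, by playing the two sides of its minimal split against each other (Lemma~\ref{zetacc:lemma}).
\item Only when $\tau$ has at most one infinite class, of type $\omega$ or $\omega^*$, and more than one finite class does Theorem~\ref{tauplustau:thm} enter, since then $\tau$ is non-scattered.
\end{itemize}
In that last case you already had what you need. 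Since $\tau\le\eta\le\tau$, you get $\tau+\tau\le\eta+\eta\le\eta\le\tau$ directly, so no canonical $\eta$-selector is required.

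\textbf{The exceptional types.} Your argument for $\twoalphalex\rightarrow(\omega+k)^{\omega+k}\Rightarrow\omega\rightarrow(\omega)^\omega$ fails. Pulling $c:[\omega]^\omega\to2$ back along one fixed copy of $\omega+k$ colours only subsets of that copy. A homogeneous set for any extension to all of $[\twoalpha]^{\omega+k}$ may lie elsewhere.

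You need a colouring defined uniformly on a dense set of copies, which is what Lemma~\ref{kappa:lemma} provides:
\begin{itemize}
\item Shrink an $\omega$-sequence $C$ so that $\delta(c_n,c_m)=\delta(c_n,c_{n+1})$ for $n<m$, and so that $N(C)=\langle\mathscr N(c_n,c_{n+1}):n\in\omega\rangle$ is increasing.
\item Then every infinite $X\subseteq N(C)$ equals $N(C_X)$ for a subsequence $C_X$ of $C$.
\item Hence colouring $C$ by $c(N(C))$ turns any homogeneous subset of $\twoalpha$ into a homogeneous subset of $\omega$.
\end{itemize}
Composing this with colouring an $(\omega+k)$-set by its $\omega$-part gives the implication; this is again where countability of $\alpha$ is used.

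Your converse direction, with $k$ fixed points placed above an $\omega$-sequence, is correct and matches the paper.

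Finally, the monotonicity you state at the outset is reversed. Failure for an order $L$ into which $\twoalphalex$ embeds gives failure for $\twoalphalex$, not conversely.
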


\begin{restatable}{thm}{affordable}\label{affordable:thm}
    Let $\kappa$ be an initial ordinal and let $\tau$ be a finite sum of ordinals and reverse ordinals $< \kappa$. Then
    \[\twokappalex \rightarrow (\tau)^\tau \iff \kappa \rightarrow (\beta)^\beta,\]
    where $\beta = \beta(\tau)$ is a certain ordinal defined in terms of $\tau$.
\end{restatable}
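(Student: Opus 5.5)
The plan is to prove both directions by moving colourings back and forth between $[\kappa]^\beta$ and $[{}^\kappa 2]^\tau$. The translation uses the levels at which elements of ${}^\kappa 2$ split. Write $\tau=\sigma_0+\dots+\sigma_{n-1}$, where each $\sigma_i$ is an ordinal $\zeta_i<\kappa$ or a reverse ordinal $\zeta_i^*$. I expect $\beta$ to be essentially $\zeta_0+\dots+\zeta_{n-1}$, possibly adjusted (for instance by multiplying by $\omega$) so that the coding below has room. The aim is to pin down $\beta$ as the order type of the set of splitting levels forced by a subset of type $\tau$.

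For $(\Leftarrow)$, I first fix $n$ pairwise incomparable prefixes $p_0\lex\dots\lex p_{n-1}$ of some finite length $k$; these keep the blocks separate. Given an increasing sequence $\vec\gamma$ of ordinals of type $\beta$ in $\kappa\setminus k$, I cut it into consecutive segments $\vec\gamma^{\,i}$ of types $\zeta_i$. To each $\xi<\zeta_i$ I assign the string $x^i_\xi$ which is $p_i$ on $[0,k)$, equals $0$ on $[k,\gamma^i_\xi)$ and equals $1$ from $\gamma^i_\xi$ on, or the complementary pattern. One computes $\Delta(x^i_\xi,x^i_\eta)=\min(\gamma^i_\xi,\gamma^i_\eta)$, so one pattern makes $\xi\mapsto x^i_\xi$ increasing and the other decreasing. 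Choosing the pattern according to whether $\sigma_i$ is $\zeta_i$ or $\zeta_i^*$, the block-$i$ strings realise $\sigma_i$. Because of the prefixes, the full set $F(\vec\gamma)$ has type $\tau$, and $F$ is definable without choice. Given $c\colon[{}^\kappa2]^\tau\to2$, I colour $\vec\gamma$ by $c(F(\vec\gamma))$ and take a homogeneous $H\subseteq\kappa$ of type $\beta$. It then remains to show that $\bigcup\{F(\vec\gamma):\vec\gamma\in[H]^\beta\}$ contains a set of type $\tau$ all of whose $\tau$-subsets lie in the range of $F$. The easy route is to apply $F$ once to $H$ itself; this is legitimate provided every $\tau$-subset of $F(H)$ is $F(\vec\gamma)$ for some $\beta$-subset $\vec\gamma\subseteq H$. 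That is exactly why $\beta$ may have to be inflated rather than simply taken to be $\sum\zeta_i$.

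For $(\Rightarrow)$, I go in the reverse direction. Given $c\colon[\kappa]^\beta\to2$ and $X\subseteq{}^\kappa2$ of type $\tau$, I let $D(X)$ be the set of splitting levels $\Delta(x,y)$ for $x,y$ adjacent, or more generally distinct, elements within a block of $X$. I then show that the canonically definable part of $D(X)$ has order type at least $\beta$: within a monotone block of type $\zeta_i$, the levels $\Delta(x_\xi,x_{\xi+1})$ cannot repeat in a way that collapses the type below $\zeta_i$. I colour $X$ by $c$ applied to the first $\beta$ elements of $D(X)$, extract a homogeneous $X$ of type $\tau$, and read off a homogeneous subset of $\kappa$ of type $\beta$.

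The main obstacle, in both directions, is the same coherence requirement. The correspondence between $\tau$-subsets of a homogeneous set and $\beta$-subsets of its splitting set must be surjective (for $\Rightarrow$), respectively must realise every $\tau$-subset (for $\Leftarrow$), and all of this must be done in \s{ZF} with no choice of enumerations. My first attempt will be to pass to ``canonical'' sets. For $(\Leftarrow)$ these are sets of the form $F(H)$, where any sub-$\tau$-set is forced to use an order-preserving sub-$\beta$-sequence of levels. For $(\Rightarrow)$ I would first thin $X$ to a canonical set, using homogeneity of the $\tau$-subsets of $X$, before reading off levels. Getting the exact form of $\beta(\tau)$ is precisely the outcome of this bookkeeping.
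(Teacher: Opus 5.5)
There is a genuine gap, and it lies in the $(\Rightarrow)$ direction. Your $(\Leftarrow)$ direction is sound and is in substance the paper's Proposition~\ref{finitesumAD:prop}. Your prefixed strings are a concrete choice of copies of $\kappa$ and $\kappa^*$ inside ${}^\kappa2$. Take $\beta$ to be the plain ordinal sum $\xi=\zeta_0+\dots+\zeta_{n-1}$, with all $\zeta_i$ infinite. Then every $\tau$-subset of $F(H)$ meets block $i$ in a set of type $\sigma_i$, so it equals $F(\vec\gamma)$ for some $\vec\gamma\in[H]^\xi$. No inflation is needed there.

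\emph{First problem: the levels inside a block.} Your claim that the levels $\delta(x_\xi,x_{\xi+1})$ inside a block of type $\zeta_i$ cannot collapse below $\zeta_i$ is false, and no thinning repairs it. Take an increasing $\omega^2$-sequence of elements of ${}^\kappa2$ which all vanish from level $\omega$ on. All its splitting levels are below $\omega$, so none of its $\omega^2$-subsets has a level set of type $\omega^2$. Thinning ``using homogeneity'' only works if the colouring itself punishes such sets.

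The paper does this as follows. It cuts $A$ canonically into its blocks; at a junction $\zeta_i^*+\zeta_{i+1}$ the cut is placed at the consecutive pair minimising $\delta$. It then cuts further into additively indecomposable pieces, one for each term $\omega^\varepsilon$ of the Cantor normal forms. On those $A$ all of whose $\tau$-subsets have a piece that is not totally canonised, it colours by whether $\delta(x_0,x_1)<\delta(x_1,x_2)$ in the leftmost such piece. Indecomposability lets one delete an initial chunk of that piece while keeping two chosen points, so such $A$ are never homogeneous. Hence a homogeneous set can be thinned until every piece $X$ is totally canonised. Then its level set $N(X)$ has type $\gamma_X$, and every $\gamma_X$-subset of $N(X)$ is realised by a subpiece (Observation~\ref{correspondingsubset:obs}).

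\emph{Second problem: combining the pieces, which is what actually determines $\beta$.} The level sets of different pieces sit in $\kappa$ in no controlled relative position, so ``the first $\beta$ elements of $D(X)$'' is not a coherent read-off.
For $\tau=\omega+\omega^*$, the two canonised level sets can interleave with a common supremum. Then $D(X')$ has type $\omega$ rather than $\omega\cdot2$ for every $\tau$-subset $X'$.
For $\tau=\omega+(\omega^2)^*$, even stacking the block-$0$ levels below the block-$1$ levels gives type $\omega+\omega^2=\omega^2$. The resulting set has $\omega^2$-subsets avoiding the block-$0$ levels. These are the level sets of no $\tau$-subset, so homogeneity of $X$ says nothing about their colour.

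The paper translates the level set of each piece $X$ by $\sum_{X'\prec X}\varsigma(X')$, the sum of the suprema of the level sets of the preceding pieces. Here $\prec$ lists the pieces by decreasing $\gamma_X$, with ties broken left to right. These suprema are unchanged on passing to full-type subpieces, since those are cofinal. The decreasing order guarantees that every $\beta$-subset of the resulting set $N'(A)$ meets each translated piece in full type.

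This is what forces $\beta(\tau)$ to be the decreasing rearrangement of all Cantor normal form terms of the $\zeta_i$, which may exceed $\xi$. For $\omega+(\omega^2)^*$ it is $\omega^2+\omega$, against $\xi=\omega^2$. So the two directions are not matched by a single bookkeeping computation. They are matched via $\xi\le\beta(\tau)$ and downward monotonicity of ordinal relations in the exponent:
\[\kappa\to(\xi)^\xi\implies\twokappalex\to(\tau)^\tau\implies\kappa\to(\beta(\tau))^{\beta(\tau)}\implies\kappa\to(\xi)^\xi.\]
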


From the above we deduce the following corollary:

\begin{restatable}{cor}{trichotomy}\label{trichotomy:cor} \textnormal{\textbf{(Trichotomy)}}
Let $\tau$ be a countably infinite order type. Then one of the following holds:
\begin{enumerate}
    \item $\tau$ is of the form $\omega + k$ or $k + \omega^*$ for some $k \in \omega$, and
    \[\twoomegalex \rightarrow (\tau)^\tau \iff \omega \rightarrow (\omega)^\omega;\]
    \item $\tau$ is not of the above form but it is a finite sum of ordinals and reverse ordinals, $\twoalphalex\centernot \rightarrow (\tau)^\tau$ for $\alpha$ countable, but
    \[\twoomegaonelex \rightarrow (\tau)^\tau \iff \omega_1 \rightarrow (\beta)^\beta\]
    for $\beta = \beta(\tau)$ an ordinal depending on $\tau$;
    \item $\tau$ is not a finite sum of ordinals and reverse ordinals, and for all $\alpha$
    \[\twoalphalex \centernot \rightarrow (\tau)^\tau.\]
\end{enumerate}
\end{restatable}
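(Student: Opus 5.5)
The plan is to derive the trichotomy by sorting countably infinite order types according to whether they are finite sums of ordinals and reverse ordinals, and then reading off the three cases from Theorems~\ref{tauplustau:thm}, \ref{omegaomegastar:thm}, \ref{similartor:thm} and \ref{affordable:thm}.

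\emph{Case (1).} Suppose $\tau$ has the form $\omega+k$ or $k+\omega^*$ with $k\in\omega$. Then $\tau$ is countably infinite and $\omega$ is a countable ordinal, so Theorem~\ref{similartor:thm} applied with $\alpha=\omega$ gives
\[\twoomegalex\rightarrow(\tau)^\tau \iff \omega\rightarrow(\omega)^\omega\]
directly.

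\emph{Case (2).} Suppose $\tau$ is not of that form but is a finite sum of ordinals and reverse ordinals. The first part of Theorem~\ref{similartor:thm} gives $\twoalphalex\centernot\rightarrow(\tau)^\tau$ for every countable $\alpha$. For the positive equivalence we apply Theorem~\ref{affordable:thm} with $\kappa=\omega_1$, which is an initial ordinal. This needs each summand of $\tau$ to be an ordinal or reverse ordinal $<\omega_1$. That holds because $\tau$ is countable, so each summand, being a suborder of $\tau$, is a countable ordinal or the reverse of one. Theorem~\ref{affordable:thm} then yields $\twoomegaonelex\rightarrow(\tau)^\tau\iff\omega_1\rightarrow(\beta)^\beta$ with $\beta=\beta(\tau)$.

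\emph{Case (3).} Suppose $\tau$ is not a finite sum of ordinals and reverse ordinals. We need $\twoalphalex\centernot\rightarrow(\tau)^\tau$ for all $\alpha$, and we split according to whether $\tau$ is scattered.

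\begin{itemize}
\item If $\tau$ is not scattered, then $\eta\le\tau$. We then want some nonzero $\tau'\le\tau$ with $\tau'+\tau'\le\tau'$, and here $\tau'=\eta$ works since $\eta+\eta\cong\eta$. To transfer the negative relation from $\tau'$ to $\tau$ we use monotonicity: if $\sigma\le\tau$ then $L\rightarrow(\tau)^\tau$ implies $L\rightarrow(\sigma)^\sigma$. Given a colouring of $[L]^\sigma$, we colour $[L]^\tau$ by the colour of the image of a fixed copy of $\sigma$ inside $\tau$, and a homogeneous set for this colouring contains a homogeneous copy of $\sigma$. Theorem~\ref{tauplustau:thm} for $\eta$ then finishes this subcase.

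\item If $\tau$ is scattered, it is countable and hence well-orderable, so Theorem~\ref{omegaomegastar:thm} applies once we show $\omega\omega^*\le\tau$ or $\omega^*\omega\le\tau$. This is the combinatorial heart of the argument. We must show that a countable scattered order embedding neither $\omega\omega^*$ nor $\omega^*\omega$ is a finite sum of ordinals and reverse ordinals.
\end{itemize}

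To prove that claim, I would consider the condensation $\sim$ on $\tau$ defined by: $x\sim y$ iff the interval between them is well-ordered or conversely well-ordered. More cleanly, I would decompose $\tau$ into maximal intervals that are ordinals or reverse ordinals. By Hausdorff's analysis, a scattered order with infinitely many such blocks has a quotient that is an infinite scattered order. An infinite quotient contains $\omega$ or $\omega^*$, and picking points from the corresponding blocks gives either an increasing or a decreasing sequence of blocks. By maximality, adjacent blocks must alternate in a way that yields descending (respectively ascending) sequences inside them, which produces a copy of $\omega\omega^*$ or $\omega^*\omega$. The maximality bookkeeping is the main obstacle. It may be easier to argue the contrapositive by induction on Hausdorff rank: if $\tau$ avoids both $\omega\omega^*$ and $\omega^*\omega$, then every point has a neighbourhood that is an ordinal plus a reverse ordinal, and compactness-style finiteness of the block decomposition follows.
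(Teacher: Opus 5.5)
Your case split and the theorems you invoke match the paper's proof, and cases (1) and (2) are handled just as there. In case (3), however, one step is invalid and another is unproved. In the non-scattered subcase you move the negative relation from $\eta$ to $\tau$ using a claimed monotonicity: $\sigma\le\tau$ and $L\rightarrow(\tau)^\tau$ imply $L\rightarrow(\sigma)^\sigma$. The paper explicitly notes that partition relations are not monotone in the exponent in general, and your justification does not work. A copy $A\in[L]^\tau$ has no canonical isomorphism with $\tau$, so ``the image of a fixed copy of $\sigma$'' only defines a colouring if you choose an isomorphism for every $A$, which \textsf{ZF} does not provide. Even given some selection $A\mapsto S_A\in[A]^\sigma$, homogeneity of $H$ for the induced colouring only yields $F(S')=F(S_H)$ for those $S'\in[S_H]^\sigma$ that arise as $S_{H'}$ for some $H'\in[H]^\tau$. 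Requiring every $S'$ to arise this way is exactly the coherent-selector condition of Definition~\ref{coherentselector:defn} used in Observation~\ref{coherentexponentlift:obs}, and you have not verified it. The detour is unnecessary. A countable non-scattered $\tau$ contains $\eta$, so it embeds every countable order and in particular $\tau+\tau\le\tau$. Hence Theorem~\ref{tauplustau:thm} applies to $\tau$ itself, which is what the paper does.

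In the scattered subcase, the fact you need is exactly Lemma~\ref{finitesumchar:lemma}: a well-orderable order embedding neither $\omega\omega^*$ nor $\omega^*\omega$ is a finite sum of ordinals and reverse ordinals. You should cite it; your sketch does not prove it. The relation ``the interval between $x$ and $y$ is well-ordered or reverse well-ordered'' is not transitive: take $x<y<z$ in $\omega+1+\omega^*$ with $y$ the middle point. The maximal such intervals overlap and are not canonical, and ``compactness-style finiteness'' has no content as stated. The paper instead argues by induction on Hausdorff rank. At a successor rank, $\tau=\sum_{n\in\mathbb{Z}}L_n$ with pieces of lower rank. If cofinally many $L_n$ on the right embedded $\omega^*$ you would get $\omega^*\omega\le\tau$, so $\sum_{n\ge n^+}L_n$ is an ordinal for some $n^+$. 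Symmetrically, some initial part of the sum is a reverse ordinal, and the finitely many middle pieces are handled by the inductive hypothesis. The limit case is analogous, using the iterated condensation classes of a fixed point.
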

\subsection{Terminology and notation}\label{terminologynotation:subsection}
Our notational conventions largely follow \cite{ieprsonr} and \cite{crtolo}. We reserve the letters $\sigma, \tau$, and $\varphi$ for order types, which are isomorphism classes of linear orders; for $\sigma$ an order type, $\sigma^*$ denotes its reverse. For $\L$ a linear order, $\otp\L$ denotes its order type. For $\sigma,\tau$ order types, $\sigma \le \tau$ denotes the statement that any order of type $\tau$ has a suborder of type $\sigma$. We say that an order type $\tau$ is countable or well-orderable to mean that if $\L$ is an order of type $\tau$, then $L$ is countable or well-orderable, respectively. The operation $+$ will denote concatenation of orders, so $\sigma + \tau$ is the order type of a linear order consisting of a copy of $\sigma$ followed by a copy of $\tau$; multiplication is colexicographic, so $\sigma\tau$ is the order type of $\tau$-many copies of $\sigma$.

$\zeta = \omega^* + \omega$ denotes the order type of the integers, $\eta$ the order type of the rationals, and $\lambda$ the order type of the real line, all under their usual orderings. Parentheses $()$ and brackets $[]$ will be used to denote open and closed intervals in the usual way; for $\L$ a linear order and $x \in L$,
\[(x,\rightarrow) \coloneqq \{y \in L : x < y\},\]
and analogously for $[x,\rightarrow)$, $(\leftarrow,x)$, and $(\leftarrow,x]$.

Throughout the paper we will be considering orders of the form $\twoalphalex$ for $\alpha$ an infinite ordinal; $\lex$ is the \emph{lexicographic order}, defined by
\[x \lex y \iff \{x(\delta) < y(\delta)\},\]
where $\delta = \delta(x,y)$ is the minimal ordinal $< \alpha$ for which $x(\delta) \neq y(\delta)$. In the special case $\alpha = \omega$, $\twoomegalex$ is the Cantor space under its usual ordering. It is in this sense that we consider the $\twoalphalex$ with $\alpha > \omega$ to be ``higher analogues of the real line".\footnote{The usual real line $\Rr$ is not in fact isomorphic to $\twoomegalex$, but the two orders are embeddable in each other, and as such satisfy the same partition relations; see the discussion of monotonicity after the definition of the partition relation.}

When considering $\twoalphalex$ for some infinite ordinal $\alpha$, we write $\mathscr B : \alpha \to \left|\alpha\right|$ to stand for some fixed bijection from $\alpha$ to its cardinality. Elements of ${}^{\le\alpha}2$ will be thought of as sequences; we write $s \sqsubseteq t$ to mean that $s$ is an initial segment of $t$, and denote the concatenation ``$s$ followed by $t$" by $s ^\frown t$. For $x, y \in {}^{\le\alpha}2$, $x \neq y$, we write $\Delta(x,y)$ for the \textit{maximal common initial segment} of $x$ and $y$, i.e.\ that element of ${}^{<\alpha}2$ of maximal length s.t.\ $\Delta(x,y) \sqsubseteq x,y$; by extension, for $A \subseteq \twoalpha$, write $\Delta(A)$ for the maximal element of ${}^{<\alpha}2$ such that $\Delta(A) \sqsubseteq x$ for all $x \in A$. For $s \in {}^{<\alpha}2$, write $h(s)$ for the height/length of $s$, an ordinal $< \alpha$, so for $x \neq y$ in $\twoalpha$ we have that $\delta(x,y) = h\circ \Delta (x,y)$ is the height of the split between $x$ and $y$.  We use the set-theoretic notation for images of functions: for $f$ a function, $X \subseteq \dom f$, 
    \[f \im X \coloneqq \{f(x) : x \in X\}.\]

For $\L$ a linear order, $\sigma$ an order type, write $[\L]^\sigma$ to mean the set of all subsets of $L$ which are ordered as $\sigma$ in the induced suborder. We will frequently abbreviate this to $[L]^\sigma$ when the order $<$ is understood. For $\L$ a linear order, $\sigma$, $\tau$ order types with $\tau \le \sigma \le \otp\L$, and $\chi$ a set, the \textit{partition relation}
\[\L \rightarrow (\sigma)^\tau_\chi\]
is the statement that for any $F : [L]^\tau \to \chi$ (thought of as a ``colouring of the copies of $\tau$ in $\L$") there exists $H \in [L]^\sigma$ which is \textit{homogeneous} (or \textit{monochromatic}) for $F$, in the sense that
\[\left|F \im [H]^\tau\right| = 1.\]
The negation of such a relation is written with $\centernot \rightarrow$ instead of $\rightarrow$, and when the colour set $\chi = 2 = \{0,1\}$ it is usually omitted from the notation. Partition relations satisfy monotonicity in the following sense: if $\L \rightarrow (\sigma)^\tau_\chi$ for some $\L$, $\sigma$, $\tau$, $\chi$, and $\langle L',<'\rangle$, $\sigma'$, $\chi'$ are such that $\otp\L \le \otp \langle L',<'\rangle$, $\sigma' \le \sigma$, and $\left|\chi'\right| \le \left|\chi\right|$, then it follows that $\langle L',<'\rangle \rightarrow (\sigma')^\tau_{\chi'}$. We do not in general have monotonicity in the \textit{exponent} $\tau$.\footnote{In general, monotonicity in the exponent fails, e.g.\ provably $\Rr \centernot \rightarrow (\omega + \omega)^\omega$, but $\Rr \rightarrow (\omega + \omega)^{\omega + 1}$ in Solovay's model (see \cite[\S4]{ieprsonr}); in \S\ref{uncountable:section} we will see another failure with exponents $3$ and $\zeta$. We can, however, recover a certain approximation to monotonicity in the exponent; see Observation \ref{coherentexponentlift:obs} in \S\ref{countable:section}.} Our main object of study in this paper is partition relations of the form
\[\twoalphalex \rightarrow (\tau)^\tau\]
and their negations, for $\tau$ an infinite order type. Note by the remark above about monotonicity that if this ``minimal" relation fails, i.e.\ $\twoalphalex \centernot \rightarrow (\tau)^\tau$, then every other relation with exponent $\tau$ fails, i.e.\ $\twoalphalex \centernot \rightarrow (\sigma)^\tau_\chi$ for all $\sigma$ and $\chi$.

For $\L$ a linear order, $A \subseteq L$, a \emph{condensation class} in $A$ is an equivalence class under the equivalence relation $x \sim y \ratio\Longleftrightarrow$ ``there are finitely many elements of $A$ between $x$ and $y$ (in the ordering $<$)". These are the fibres of the finite condensation $\mathbf{c}_F$; see \cite[p.\ 79]{rosenstein} for details. A condensation class is either finite or it is ordered as one of $\omega$, $\omega^*$, or $\zeta$. For $\sigma \in \{\omega,\omega^*,\zeta\}\cup \omega$, write $\mathbf{cc}^\sigma(A)$ for the set of condensation classes of $A$ of order type $\sigma$.

A linear order $\L$ is \emph{scattered} if no subset of $L$ is densely ordered in the induced suborder; equivalently, if $L$ is well-orderable, $\L$ is scattered if $[L]^\eta = \emptyset$. The scattered orders with which we concern ourselves in this paper will all be in bijection with an ordinal, allowing us to appeal to Hausdorff's analysis of them from \cite{hausdorfforders}; cf. e.g.\ \cite[\S5.3]{rosenstein}.

A linear order $\L$ is \emph{(additively) indecomposable} if whenever it is decomposed into an initial segment $\langle A,<\rangle$ and a final segment $\langle B,<\rangle$, $\L$ embeds in one of $\langle A,<\rangle$ or $\langle B,<\rangle$; $\L$ is \emph{strictly indecomposable to the left} (resp., \emph{right}) if it is always the case that $\L$ embeds in $\langle A,<\rangle$ and not in $\langle B,<\rangle$ (resp.\ that $\L$ embeds in $\langle B,<\rangle$ and not in $\langle A,<\rangle$).
\subsection{Useful definitions and observations}\label{definitions:subsection}
Our first definition fleshes out an idea which was used implicitly in the proof of \cite[Lemma 14]{ieprsonr}.
\begin{defn}\label{dense:defn}
    For $\D \subseteq [\L]^\tau$, say $\D$ is \emph{dense} in $[\L]^\tau$ (by analogy with forcing) if for any $A \in [\L]^\tau$, some subcopy of $\tau$ in $A$ is in $\D$, i.e.\ $[A]^\tau \cap \D \neq \emptyset$. Continuing the analogy, say that $\D$ is \emph{open dense} if it is dense and downwards closed, i.e.\ has the property that if $A \in \D$ then $[A]^\tau \subseteq \D$.
\end{defn}
The usefulness of this notion comes from the following observation:
\begin{obs}\label{denseisenough:obs}
    Let $\L$ be a linear order, $\tau$ an order type, and $\chi$ a set. If $\D$ is dense in $[\L]^\tau$ and $F : \D \to \chi$ has no homogeneous set, in the sense that for any $A \in \D$ there is some $A' \in [A]^\tau \cap \D$ with $F(A') \neq F(A)$, then $F$ can be extended to a colouring defined on the whole of $[\L]^\tau$ with no homogeneous set. In particular, to show $\L \centernot \rightarrow (\tau)^\tau_\chi$ it is sufficient to find a dense set $\D \subseteq [\L]^\tau$ and a colouring $F : \D \to \chi$ with no homogeneous set.
\end{obs}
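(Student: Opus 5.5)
The aim is to extend $F$ from $\D$ to all of $[\L]^\tau$ in \s{ZF}, with no choice. The extension will therefore be defined canonically, by cases according to whether a copy of $\tau$ lies in $\D$. Fix two distinct colours $c_0 \neq c_1$ in $\chi$; we may assume $|\chi| \ge 2$, since otherwise the hypothesis on $F$ forces $\D = \emptyset$, hence $[L]^\tau = \emptyset$ by density, and the statement is vacuous. Define $G : [L]^\tau \to \chi$ by $G(A) = F(A)$ if $A \in \D$, and $G(A) = c_0$ otherwise. This is a definable function, so no choice is used.

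Next we check that $G$ has no homogeneous set. Let $H \in [L]^\tau$. By density there is some $A \in [H]^\tau \cap \D$. By the hypothesis on $F$ there is $A' \in [A]^\tau \cap \D$ with $F(A') \neq F(A)$. Since $[A]^\tau \subseteq [H]^\tau$, both $A$ and $A'$ are elements of $[H]^\tau$, and $G(A) = F(A) \neq F(A') = G(A')$. Hence $|G \im [H]^\tau| \ge 2$, so $H$ is not homogeneous. Note that only the values on $\D$ matter here; the colour assigned off $\D$ is irrelevant, so the choice of $c_0$ is harmless.

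For the ``in particular'' clause, suppose $\D$ is dense and $F : \D \to \chi$ has no homogeneous set in the sense above. The extension $G$ just built witnesses $\L \centernot\rightarrow (\tau)^\tau_\chi$. Since $\tau \le \tau$ trivially, this relation is of the correct form in the arrow notation.

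There is no real obstacle here. The only points needing care are avoiding any appeal to choice, which the definition by cases handles, and the degenerate case $|\chi| \le 1$ or $\D = \emptyset$, which was dealt with at the start.
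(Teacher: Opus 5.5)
Your proof is correct and is essentially the paper's argument: extend $F$ by a fixed constant colour off $\D$, then for any $H$ pass to some $A \in [H]^\tau \cap \D$ by density and use the hypothesis to get $A' \in [A]^\tau \cap \D$ with $F(A') \neq F(A)$. Your treatment of the degenerate case $|\chi| \le 1$ is a harmless addition: there $[L]^\tau = \emptyset$, whereas the paper simply fixes some $c \in \chi$ without comment.
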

\begin{proof}
    Let $\D$, $F$ be as given, and extend $F$ to a colouring $F' : [\L]^\tau \to \chi$ like so; fix some $c \in \chi$, and for $A \in [\L]^\tau$, set
    \[F'(A) = \begin{cases*}
        F(A) & if $A \in \D$;\\
        c & otherwise.
    \end{cases*}\]
    We claim no $A \in [\L]^\tau$ can be homogeneous for $F'$: given any $A$, we can reduce to some $A' \in [A]^\tau \cap \D$; by our assumption on $F$, there is some $A'' \in [A']^\tau \cap \D$ with $F(A') \neq F(A'')$, i.e.\ $F'(A') \neq F'(A'')$, so $F'$ takes multiple values on $[A]^\tau$.
\end{proof}
\begin{defn}\label{canonised:defn}
    For $A \in [\twoalphalex]^\omega$, say that $A$ is \emph{canonised} if whenever $x,y,z \in A$ are such that $x < y$ and $x < z$,
    \[\delta(x,y) = \delta(x,z);\]
    equivalently if for $x < y$ elements of $A$, the quantity $\delta(x,y)$ is uniquely determined by $x$. Symmetrically, for $A \in [\twoalphalex]^{\omega^*}$, say $A$ is canonised if for $x < y$ in $A$, $\delta(x,y)$ is uniquely determined by $y$. Finally, for $A \in [\twoalphalex]^\zeta$, say $A$ is canonised if it consists of two pieces $A_0 \in [\twoalphalex]^{\omega^*}$ and $A_1 \in [\twoalphalex]^\omega$ such that $A_0 < A_1$, each $A_i$ is canonised, and moreover whenever $x \in A_0$ and $y \in A_1$,
    \[\delta(x,y) = \min \{\delta(x',y') : x',y' \in A\}.\]
\end{defn}
The above definition is perhaps better understood visually: for $\sigma \in \{\omega,\omega^*,\zeta\}$, $A \in [\twoalphalex]^\sigma$ is canonised if its splitting type is the following:
\\

\tikzset{every picture/.style={line width=0.75pt}} 

\begin{tikzpicture}[x=0.75pt,y=0.75pt,yscale=-1,xscale=1]

\draw    (120,120) -- (200,200) ;
\draw    (220,180) -- (200,200) ;
\draw    (200,160) -- (180,180) ;
\draw    (180,140) -- (160,160) ;
\draw    (160,120) -- (140,140) ;
\draw    (110,110) -- (115,115) ;
\draw    (100,100) -- (105,105) ;
\draw    (90,90) -- (95,95) ;

\draw (480,120) -- (400,200) ;
\draw (400,200) -- (380,180) ;
\draw (420,180) -- (400,160) ;
\draw (440,160) -- (420,140) ;
\draw (460,140) -- (440,120) ;
\draw (485,115) -- (490,110) ;
\draw (495,105) -- (500,100) ;
\draw (505,95) -- (510,90) ;

\draw (160,210) node [anchor=north west][inner sep=0.75pt]   [align=left] {if ordered as $\omega ^*$;};
\draw (360,210)node [anchor=north west][inner sep=0.75pt]   [align=left] {if ordered as $\omega$;};

\draw (220,280) -- (300,360) ;
\draw (380,280) -- (300,360) ;
\draw (280,340) -- (295,325) ;
\draw (260,320) -- (275, 305) ;
\draw (240,300) -- (255, 285) ;
\draw (210, 270) -- (215, 275) ;
\draw (200, 260) -- (205, 265) ;
\draw (190, 250) -- (195, 255) ;
\draw (385,275) -- (390,270) ;
\draw (395, 265) -- (400,260) ;
\draw (405, 255) -- (410, 250) ;
\draw (320,340) -- (305,325) ;
\draw (340,320) -- (325,305) ;
\draw (360,300) -- (345,285) ;
\draw (260,370)node [anchor=north west][inner sep=0.75pt]   [align=left] {if ordered as $\zeta$,};
\end{tikzpicture}\\
using visualisations of splitting types as in \cite{crtolo}; cf. Definition 1.3.2(5) and Figure 1 in that paper.
\begin{lemma}\label{canonisation:lemma}
    Let $\alpha$ be an ordinal. There is a map $\mathscr F : \mathcal P(\twoalpha) \to \mathcal P(\twoalpha)$ with the property that for any order type $\tau$ and any $A \in [\twoalphalex]^\tau$, $\mathscr F(A) \in [A]^\tau$ and every infinite condensation class in $\mathscr F(A)$ is canonised.
\end{lemma}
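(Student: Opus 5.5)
The plan is to define $\mathscr F$ \emph{class by class}. Given $A$, decompose it into its condensation classes, keep every finite class unchanged, and replace each infinite class $C$ by a canonised subset $C' \subseteq C$ with $\otp C' = \otp C$. Set $\mathscr F(A)$ to be the union of all these pieces. The whole construction must be uniform and definable from $A$ alone, using no choice. This works because every choice will be made by one of three means: the canonical enumeration of an $\omega$- or $\omega^*$-type set, least elements of ordinals, or the bits $x(\delta)$ of the elements themselves.

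\emph{Order type and classes of $\mathscr F(A)$.} First, $\otp \mathscr F(A) = \tau$: $A$ is the ordered sum, over the condensation order $\mathbf c_F(A)$, of its classes, and $\mathscr F(A)$ is the sum over the same index order of sets of the same types. Second, the condensation classes of $\mathscr F(A)$ are exactly the sets $C'$. If $C < D$ are distinct classes of $A$, there are infinitely many elements of $A$ between them. These elements lie either in infinitely many classes, each of which keeps at least one element, or in some infinite class, which stays infinite. So $C'$ and $D'$ remain in different classes, and it suffices to canonise each class separately.

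\emph{Type $\omega$.} Let $C = \{x_0 < x_1 < \cdots\}$ in its canonical enumeration. The key identity is that $x<y<z$ implies $\delta(x,z) = \min(\delta(x,y),\delta(y,z))$. Hence, for fixed $n$, the sequence $m \mapsto \delta(x_n,x_m)$ ($m>n$) is non-increasing. Since ordinals are well-founded, it stabilises from some least index $m_n$ on, at a value $d_n$. Define $n_0 = 0$, and let $n_{k+1}$ be the least $m > n_k$ with $m \ge m_{n_j}$ for all $j \le k$. Put $C' = \{x_{n_k} : k\in\omega\}$. Then for $j<k<l$ we have $\delta(x_{n_j},x_{n_k}) = d_{n_j} = \delta(x_{n_j},x_{n_l})$, so $C'$ is canonised. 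The case $\omega^*$ is symmetric, using the canonical decreasing enumeration.

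\emph{Type $\zeta$.} This is the step I expect to be the main obstacle, since a $\zeta$-class has no canonical origin at which to split it. I would resolve it using the bits of the elements. Let $D = \min\{\delta(x,y) : x\ne y \in C\}$, so all elements of $C$ extend $\Delta(C)$, which has height $D$. Let $C_0 = \{x \in C : x(D) = 0\}$ and $C_1 = \{x\in C : x(D)=1\}$. Then $C_0 < C_1$, both are nonempty, and every cross pair has $\delta = D$, the global minimum. Since $C_0$ is a proper nonempty initial segment of a $\zeta$-order, it has type $\omega^*$ and $C_1$ has type $\omega$. Now apply the $\omega^*$ and $\omega$ procedures to $C_0$ and $C_1$ respectively. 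The resulting pieces are still split at height $D$ across, hence the union is a canonised copy of $\zeta$. Finally, I will check that every step is a definition from $A$, so $\mathscr F$ is a genuine ZF-definable map on $\mathcal P(\twoalpha)$.
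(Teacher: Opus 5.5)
Your proposal is correct and follows essentially the same route as the paper. You treat each condensation class separately and split a $\zeta$-class where the minimal splitting height $D$ is attained; your split by the bit at height $D$ is exactly the paper's split at the consecutive pair $x^{(0)}<x^{(1)}$ minimising $\delta$. You then thin the $\omega$- and $\omega^*$-pieces greedily using $\delta(x,z)=\min\{\delta(x,y),\delta(y,z)\}$ and well-foundedness. The only difference is that you thin via the stabilisation indices $m_n$, whereas the paper repeatedly takes the index $n$ minimising $\delta(x_n,x_{n+1})$; both work.

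Your extra check that the classes of $\mathscr F(A)$ are exactly the sets $C'$, which the paper leaves implicit, is sound with one caveat. The infinitely many points between $x\in C$ and $y\in D$ may lie in $C$ or $D$ itself; for $\omega+\omega^*$, for instance, nothing lies strictly between the two classes. In that case what you need is that $C'$ has the same order type as $C$, not merely that it stays infinite.
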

\begin{proof}
    Observe first that for since for any $x < y < z$, \[\delta(x,z) = \min\{\delta(x,y),\delta(y,z)\},\] we have that given any $\sigma \in \{\omega,\omega^*,\zeta\}$ and $Z \in [\twoalphalex]^\sigma$, the minimal value $\min \{\delta(a,b) : a, b \in Z\}$ must in fact be attained by two consecutive $a < b$ in $Z$. Now let $A \in [\twoalphalex]^\tau$ for some $\tau$. We find $\mathscr F(A) \in [A]^\tau$ by reducing the infinite condensation classes of $A$ to subcopies of the same order type which are canonised. For $X = \{x_n : n \in \omega\} \in \ccomega(A)$, ordered ascendingly, we define a subset $X' \coloneqq \{x_{n_k} : k \in \omega\}$ as follows: for each $k \in \omega$, let $n_k$ be that value of $n \in \omega$ which minimises $\delta(x_{n_k}, x_{n_k + 1})$ subject to $n_k > n_l$ for all $l < k$. Then for any $k < m$, $\delta(x_{n_k}, x_{n_m}) = \delta(x_{n_k},x_{n_k + 1})$, so in particular $X' \in [X]^\omega$ is canonised. For $X \in \ccomegastar(A)$, we apply the symmetric procedure to obtain $X' \in [X]^{\omega^*}$ which is canonised. For $X \in \ccint(A)$, let $x^{(0)} < x^{(1)}$ be the consecutive elements of $X$ such that $\delta(x^{(0)}, x^{(1)})$ is minimal; then write
    \begin{align*}
        X_0 &\coloneqq \{x \in X : x \le x^{(0)}\};\\
        X_1 & \coloneqq \{x \in X : x \ge x^{(1)}\}.
    \end{align*}
    We now apply the procedures described above to obtain $X_0' \in [X_0]^{\omega^*}$, $X_1' \in [X_1]^\omega$, and set $X' = X_0' \cup X_1'$. Then $X' \in [X]^\zeta$ and $X'$ is canonised. We let $\mathscr F(A)$ be the subset of $A$ obtained by replacing each infinite condensation class $X$ of $A$ with its corresponding $X'$. Clearly $\mathscr F(A) \subseteq A$ and every infinite condensation class of $\mathscr F(A)$ is canonised. Since we have obtained $\mathscr F(A)$ simply by replacing intervals of $A$ by subsets of those intervals of the same order type, it follows that $\mathscr F(A)$ is also ordered as $\tau$, as required.
\end{proof}
The significance of the existence of this function is that it gives us a canonical way to shrink any $A \subseteq \twoalpha$ to a subcopy of the same order type all of whose infinite condensation classes are canonised, which will be very useful in some of our proofs.
\section{Relations which are negative for all $\alpha$}\label{arbitrary:section}
We begin with two results which show that some order types $\tau$ are such that provably $\twoalphalex \centernot \rightarrow (\tau)^\tau$ for all ordinals $\alpha$.

\tauplustau*
We note here that any such $\tau$ is necessarily non-scattered.\footnote{One can e.g.\ fix an order-embedding witnessing that $\tau + \tau + \tau \le \tau$ and iterate it on a point to find a  copy of $\eta$ in $\tau$. This fact was proved by Ginsburg and Morel, independently of each other, in \cite{ginsburgremarks} and \cite{morelarithmetic}, respectively.} For $\tau$ countable, this is in fact an equivalence; if $\tau$ is non-scattered, then $\eta \le \tau$, so any countable order embeds in $\tau$; in particular, $\tau + \tau \le \tau$.
\begin{proof}\setcounter{scratch}{\value{thmcount}}
\setcounter{thmcount}{\getrefnumber{tauplustau:thm}}
    For $A \subseteq \twoalpha$, $s \in {}^{<\alpha}2$, write 
    \begin{align*}
        A^0_s &\coloneqq \{x \in A: s^\frown\langle 0 \rangle \sqsubseteq x\}\\
        A^1_s & \coloneqq \{x \in A: s^\frown\langle 1 \rangle \sqsubseteq x\}.
    \end{align*}
    For $A \in [\twoalphalex]^\tau$, $s \in \lessalpha$, say $s$ is $\tau$-\textit{splitting for} $A$ if $A^0_s$ and $A^1_s$ both embed $\tau$.

    \begin{claim}\label{uniquemintausplitting:claim}
        For any $A \in [\twoalphalex]^\tau$, there is a unique $s \in \lessalpha$ of minimal length which is $\tau$-splitting for $A$.
    \end{claim}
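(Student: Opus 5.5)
We need existence of a $\tau$-splitting node, and uniqueness of the minimal-length one. Both will come from the basic tree structure of ${}^{<\alpha}2$ together with the hypothesis $\tau+\tau\le\tau$.

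\textbf{Existence.} Let $A\in[\twoalphalex]^\tau$. Since $\tau\ne 0$ and $\tau+\tau\le\tau$, $\tau$ is infinite, so $A$ has at least two points. I will try to descend through the tree. Set $s_0=\emptyset$, so that $A\cap\{x : s_0\sqsubseteq x\}=A$ embeds $\tau$. Suppose $s$ has been defined with $A_s\coloneqq\{x\in A : s\sqsubseteq x\}$ embedding $\tau$. If $s$ is not $\tau$-splitting, then since $A_s=A^0_s\cup A^1_s$ with $A^0_s<A^1_s$, we use that $\tau$ is indecomposable: from $\tau+\tau\le\tau$ one gets $\tau\le\tau+\tau$ trivially, and more to the point a copy of $\tau$ split into an initial and final segment must have one part embedding $\tau$. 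This needs care. I expect the easier route is to argue that if no node splits then we get a branch, which forces a contradiction.

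So I will argue as follows. Any copy $B$ of $\tau$ in $A_s$ decomposes as $B\cap A^0_s< B\cap A^1_s$. Using $\tau+\tau\le\tau$, replace $B$ by a copy of $\tau+\tau$ inside it. Its two halves $C_0<C_1$ are each copies of $\tau$. If $C_0\subseteq A^0_s$ and $C_1\subseteq A^1_s$, then $s$ splits. Otherwise one half lies entirely inside $A^i_s$ for a single $i$, since the cut point lies within one of the halves. Hence $A_{s^\frown\langle i\rangle}$ embeds $\tau$ for some $i$.

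If $s$ is not splitting, I choose the least such $i$ and let $s'=s^\frown\langle i\rangle$. At limit stages $\gamma$, I take the union $t$ of the previous nodes and need $A_t$ to still embed $\tau$. This is the main obstacle, since a decreasing intersection of sets embedding $\tau$ need not embed $\tau$. To handle it without choice I will avoid the recursion. Instead, I consider the set $T$ of nodes $t$ such that $A_t$ embeds $\tau$, which is downward closed. I will show that if $T$ has no splitting node, then $T$ is linearly ordered by the fact above: any two incomparable nodes in $T$ would have a common predecessor $\Delta$ in $T$ both of whose sides embed $\tau$, so $\Delta$ would be splitting.

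A linearly ordered $T$ has its union $b\in{}^{\le\alpha}2$. Take any copy $B\in[A]^\tau$ and a copy $C_0<C_1$ of $\tau+\tau$ in $B$. The node $\Delta(C_0\cup C_1)$ lies in $T$. Let $u\coloneqq\Delta(C_0)\in T$ and $v\coloneqq\Delta(C_1)\in T$; these are comparable, and both lie in $T$ since $C_0\subseteq A_u$ and $C_1\subseteq A_v$. Since $C_0$ has at least two points it has a split node extending $u$ with points on both sides. Pushing this, a branch of $T$ can contain at most one point of $A$ beyond it, so a copy of $\tau$ cannot live along $b$. More precisely, I will show that for each $t\in T$, the set $A_t$ minus the single possible point $b$ is covered by the sets $A_{t'}$ for $t'$ off-branch, none of which embeds $\tau$. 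I then need to get a contradiction from $\tau$ being covered by such pieces. Using $C_0<C_1$ again, I take the split node $\Delta(x,y)$ for $x\in C_0$, $y\in C_1$ minimising height; its two sides contain $C_0$ and $C_1$ respectively, modulo finitely many boundary points. Since $\tau+1+\tau\le\tau+\tau+\tau\le\tau$, I can arrange a middle point to absorb this discrepancy, which gives a splitting node and the contradiction. I expect this last bookkeeping step to be the hardest part.

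\textbf{Uniqueness.} If $s\ne s'$ are both $\tau$-splitting of the same minimal length, then they are incomparable. Let $r=\Delta(s,s')$, which has shorter length. Then $A^0_r\supseteq A_s$ or $A_{s'}$, and likewise for $A^1_r$, so both sides of $r$ embed $\tau$ and $r$ is $\tau$-splitting. This contradicts minimality. Finally, a minimal length exists because the set of splitting nodes is nonempty and the ordinals are well-ordered.
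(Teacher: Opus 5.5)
Your uniqueness argument is correct and is the paper's: the meet $\Delta(s,s')$ of two incomparable $\tau$-splitting nodes is again $\tau$-splitting. The opening of your existence argument is also sound. The $\tau+\tau$ trick shows that if $A_s$ embeds $\tau$ then so does some $A_{s^\frown\langle i\rangle}$, and if no node were $\tau$-splitting then $T=\{t : A_t \text{ embeds } \tau\}$ would be a chain.

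But the existence proof stops where the real work begins, and there are three problems with the ending.
\begin{enumerate}
\item Being covered by pieces that do not embed $\tau$ is no contradiction in itself, since singletons cover $A$.
\item When $h(b)<\alpha$ the leftover is not ``the single possible point $b$'' but the whole cone $A_b$.
\item Most importantly, your final step is false. If $r=\Delta(x,y)$ has minimal height among $x\in C_0$, $y\in C_1$, all you get is $C_0\subseteq A^0_r$ or $C_1\subseteq A^1_r$. The cut at $r$ can slice the other half into two infinite pieces, one of which does not embed $\tau$.
\end{enumerate}

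Here is a concrete counterexample to the final step. Take $\alpha=\omega$ and $\tau=\omega^*+\eta$; this is countable and contains $\eta$, so $\tau+\tau\le\tau$. Let $O$ be a copy of $\omega^*$ consisting of sequences beginning with $0$. Let $Q$ be a copy of $\eta$ consisting of sequences beginning with $1$ that meets each of the cones above $\langle 1,0\rangle$ and $\langle 1,1\rangle$ in a copy of $\eta$. Put $B=O\cup Q\in[\twoomegalex]^\tau$. Let $C_0=O\cup\{q\in Q:\langle 1,0\rangle\sqsubseteq q\}$, and let $C_1$ be any copy of $\tau$ inside $\{q\in Q:\langle 1,1\rangle\sqsubseteq q\}$. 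Your minimal split node is then $r=\emptyset$, but $B^0_\emptyset=O$ has type $\omega^*$. So $\emptyset$ is not $\tau$-splitting for $B$; the correct node is $\langle 1\rangle$. The defect is an infinite part of $C_0$ on the wrong side of the cut, not finitely many boundary points, so no ``middle point'' can absorb it. Notice also that this step never uses that $T$ is a chain.

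The missing idea is a way to choose the pair so that, on the outer side of each chosen point, there is still an interval embedding $\tau$ that provably stays on the same side of the cut. The paper does this with the condensation $x\sim_\tau y$ iff $A\cap[x,y]$ does not embed $\tau$. This relation is convex by $\tau+\tau+\tau\le\tau$, and its quotient is densely ordered by $\tau+1+\tau\le\tau$.
\begin{enumerate}
\item Let $s_A$ have minimal length among the $\Delta(x,y)$ with $[x]<[y]$ both non-extremal.
\item For a witnessing pair, density gives non-extremal classes $[x']<[x]$ and $[y']>[y]$.
\item Minimality forces $\Delta(x',y')=s_A$.
\item Hence $A\cap[x',x]\subseteq A^0_{s_A}$ and $A\cap[y,y']\subseteq A^1_{s_A}$, and both of these embed $\tau$.
\end{enumerate}

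Alternatively, your chain route can be completed, but it needs the same density ingredient.
\begin{enumerate}
\item By your half-splitting observation, $T$ has no maximum.
\item So with $b=\bigcup T$, the set $A$ is a well-ordered sum $L$ of the cones $A_{t^\frown\langle 0\rangle}$ with $t^\frown\langle 1\rangle\sqsubseteq b$, followed by $A_b$, followed by an anti-well-ordered sum $R$ of the cones $A_{t^\frown\langle 1\rangle}$ with $t^\frown\langle 0\rangle\sqsubseteq b$. None of these pieces embeds $\tau$.
\item By $\tau+\tau+\tau\le\tau$, some copy $D$ of $\tau$ would lie entirely inside $L$, $A_b$ or $R$.
\item For $L$, send each $\sim_\tau$-class of $D$ to the least block index it meets. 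This map is strictly increasing from a dense order with at least two elements into an ordinal, which is impossible. The case of $R$ is symmetric.
\end{enumerate}
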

    
    \begin{claimproof}
        First note that given any $s <_\text{lex} t$ both $\tau$-splitting for $A$, their meet $\Delta (s,t)$ is also $\tau$-splitting for $A$. This is because $A^0_{\Delta (s,t)} \supseteq A^0_s$, and $A^1_{\Delta (s,t)} \supseteq A^1_t$, and $A^0_s, A^1_t$ each embed $\tau$ by assumption. So given that there are any $\tau$-splitting nodes for $A$, there is a unique one of minimal length; it remains to show that there are any such nodes at all.

    Consider the relation $\sim_\tau$ defined on $A$ by $x \sim_\tau y$ iff $A \cap [x,y]$ does not embed $\tau$. Note that this is an equivalence relation, as if $x < y < z$ and $A \cap [x,z]$ does embed $\tau$, then it also embeds e.g.\ $\tau + \tau + \tau$ and so at least one or the other of $A\cap [x,y]$ or $A\cap [y,z]$ must embed $\tau$; for $x \in A$, write $[x]$ for the equivalence class of $x$ under $\sim_\tau$. We have moreover that $\sim_\tau$ is a condensation, i.e.\ the equivalence classes are convex subsets of $A$, and as such the ordering on $A$ induces an ordering on these equivalence classes. We claim that this is a dense order: given $[x] < [y]$, by definition the interval between $x$ and $y$ in $A$ embeds $\tau$; but since $\tau + \tau \le \tau$, we have that $\tau + 1 + \tau \le \tau$, so we may fix a copy of $\tau + 1 + \tau$ between $x$ and $y$ in $A$, and let $z$ be the element of it corresponding to the $1$ (we remark that this $z$ is not necessarily unique, but this is not a problem). Then $x \not \sim_\tau z$ and $z \not \sim_\tau y$, so $[x] < [z] <[y]$.

    Now consider the set \[\begin{split}
        S \coloneqq\{s \in \lessalpha\,:& \,\exists \,x, y \in A \text{ with } [x] < [y],\\ & \,s = \Delta (x, y),\text{ and } [x], [y]\text{ not extremal in }A/\sim_\tau\},
    \end{split}\]
    where here by \textit{extremal} we mean maximal or minimal. Note that if $s, t$ are both in $S$ then so is $\Delta (s,t)$. It follows that the element of $S$ of minimal length is unique; call this $s_A$. We claim that $s_A$ is $\tau$-splitting for $A$. Let $x < y$ witness that $s_A \in S$, so $s_A = \Delta (x, y)$ and $[x] < [y]$. Then since $[x]$ and $[y]$ are not extremal, there exist $x', y' \in A$ with $[x']< [x]$ and $[y]<[y']$, and $[x'],[y']$ also not extremal. Then by minimality $s_A = \Delta(x', y')$ also. But now, both $x$ and $x'$ extend $s_A^\frown \langle 0 \rangle$ and both $y$ and $y'$ extend $s_A ^\frown \langle 1 \rangle$; in particular, $A^0_{s_A} \supseteq A \cap [x',x]$ and $A^1_{s_A} \supseteq A \cap [y,y']$; but since $[x'] < [x] < [y] < [y']$, we have in particular that $x' \not \sim_\tau x$ and $y \not \sim_\tau y'$, so $A \cap [x',x]$ and $A \cap [y,y']$ both embed $\tau$.
    \end{claimproof}

    We now build an injection $f_A: {}^{<\omega}2 \to \lessalpha$ which preserves both the tree structure and the lexicographic ordering of ${}^{<\omega}2$ by means of the following recursion:

    \[\begin{split}
			f_A(\emptyset) &= s_A,\text{ and for }i \in \{0,1\},\text{ given }f_A(t)\text{ for some } t \in {}^{<\omega}2,\\
			f_A(t^\frown \langle i \rangle) &\text{ is the minimal-height }\tau\text{-splitting node for }A^i_{f_A(t)}.
		\end{split}
		\]
    Then in particular $f_A(t ^\frown \langle i \rangle)$ extends $f_A(t)^\frown \langle i \rangle$, so $f_A$ preserves both the tree structure and the lexicographic ordering of ${}^{<\omega}2$, as claimed.
    
    Now we define a colouring $F : [\twoalphalex]^\tau \to 2$ by, for $A \in [\twoalphalex]^\tau$,
    \[F(A) = \begin{cases*}
        0 & if $h(f_A(\langle 0 \rangle)) \ge h(f_A(\langle 1 \rangle))$;\\
        1 & if $h(f_A(\langle 0 \rangle)) < h(f_A(\langle 1 \rangle))$.
    \end{cases*}\]
    \begin{claim}\label{foranytwotausplittings:claim}
        For any $s, t \in \lessalpha$ which are both $\tau$-splitting for $A$ and have $s \lex t$, there is $B \in [A]^\tau$ with $f_{B}(\langle 0 \rangle) = s$, $f_{B}(\langle 1 \rangle) = t$, and $f_{B}(\emptyset) = \Delta (s,t)$.
    \end{claim}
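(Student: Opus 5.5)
Set $r \coloneqq \Delta(s,t)$. Since $s \lex t$ and $s,t$ are distinct nodes that are both $\tau$-splitting, neither is an initial segment of the other. Indeed, if $s \sqsubseteq t$, we would need to pick the branch containing $t$, which loses one side. So $r \sqsubsetneq s,t$ with $r^\frown\langle 0\rangle \sqsubseteq s$ and $r^\frown\langle 1\rangle \sqsubseteq t$.

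The plan is to let $B$ consist of two copies of $\tau$ on either side of $s$ followed by two copies on either side of $t$. Fix copies $C_0 \in [A^0_s]^\tau$, $C_1 \in [A^1_s]^\tau$, $D_0 \in [A^0_t]^\tau$ and $D_1 \in [A^1_t]^\tau$. The set $C_0\cup C_1\cup D_0\cup D_1$ has type $\tau+\tau+\tau+\tau$, which embeds in $\tau$. We cannot simply take the union, though: we need exactly type $\tau$. Since $\tau+\tau\le\tau$ gives $\tau \le \tau+\tau+\tau+\tau$ as well as the reverse, we instead choose an embedding $g$ of $\tau$ into $\tau+\tau+\tau+\tau$ whose image meets each of the four summands in a copy of $\tau$. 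To get $g$, first embed $\tau+\tau+\tau+\tau$ into $\tau$ and compose with the identity. The point is to arrange that $B$ is the image of a set of type $\tau$ that contains a copy of $\tau$ in each quarter. The simplest way is to take $B = C_0'\cup C_1'\cup D_0'\cup D_1'$, where $\tau+\tau+\tau+\tau\cong\tau$ is not assumed. If $\tau$ is not literally isomorphic to $4\tau$, I would pick $E\in[C_0\cup C_1\cup D_0\cup D_1]^\tau$ as the image of an embedding $\tau\hookrightarrow 4\tau$ that is itself obtained by composing $\tau \hookrightarrow$ (a copy of $4\tau$ inside $\tau$) with a copy-wise identification. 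Then I would enlarge it to ensure each quarter contains $\tau$: take $E \supseteq$ a copy of $\tau$ in each quarter. Concretely, inside $\tau$ locate disjoint convex-ordered copies $\tau_1<\tau_2<\tau_3<\tau_4$ (using $4\tau\le\tau$), and map the whole of $\tau$ into $4\tau$ via an embedding that is the identity on the $\tau_i$ parts into the corresponding quarter. This is the main technical obstacle, and the fix I would try is to send everything of $\tau$ below/above/between the $\tau_i$ into the quarter containing the nearest $\tau_i$, which is possible because the gaps have type $\le\tau$ and each quarter is $\tau$, with room to absorb $\sigma+\tau+\sigma'$ once we use $\tau+\tau+\tau\le\tau$ within each quarter.

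Given such $B\in[A]^\tau$ with $B\cap A^i_s$ and $B\cap A^i_t$ each embedding $\tau$, and with $B \subseteq A^0_s\cup A^1_s\cup A^0_t\cup A^1_t$, the verification goes as follows. First, $B^0_r \supseteq B\cap A^0_s$ and $B^1_r\supseteq B\cap A^1_t$, so $r$ is $\tau$-splitting for $B$. Any node $u$ shorter than $r$ or incomparable with it has at most one of $B^0_u,B^1_u$ nonempty, because every element of $B$ extends $r$; a proper initial segment $u\sqsubsetneq r$ puts all of $B$ on one side. Hence $f_B(\emptyset)=s_B=r$ by the uniqueness in Claim~\ref{uniquemintausplitting:claim}. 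Next, $B^0_r = B\cap(A^0_s\cup A^1_s)$, and $s$ is $\tau$-splitting for it. Any $u$ with $r^\frown\langle0\rangle\sqsubseteq u\sqsubsetneq s$ has all of $B^0_r$ on one side, and nodes not comparable with $s$ meet nothing. So the minimal $\tau$-splitting node of $B^0_r$ is $s$, i.e.\ $f_B(\langle0\rangle)=s$. Symmetrically, $f_B(\langle1\rangle)=t$. These verifications are routine; the only delicate point is building $B$ of exact type $\tau$ inside the four-piece union, as described above.
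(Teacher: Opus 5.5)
Your proposal is correct and is essentially the paper's argument: use $\tau+\tau+\tau+\tau\le\tau$ to write $\tau=\tau_0+\tau_1+\tau_2+\tau_3$ with each $\tau_i$ a convex piece bi-embeddable with $\tau$, place a copy of $\tau_i$ in $A^0_s$, $A^1_s$, $A^0_t$, $A^1_t$ respectively, and check the minimal $\tau$-splitting nodes just as you do. The step you flag as the main obstacle is in fact immediate, since each convex piece is a suborder of $\tau$ and so embeds into the corresponding quarter, with no need for $\tau+\tau+\tau\le\tau$ or for the copies to be convex; likewise, the incomparability of $s$ and $t$ is part of what $s \lex t$ means for nodes rather than something derived from their being $\tau$-splitting.
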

    \begin{claimproof} By definition each of $A^0_s$, $A^1_s$, $A^0_t$, and $A^1_t$ embed $\tau$, and since $s$ and $t$ do not extend each other, all four of these sets are disjoint. Since $\tau + \tau + \tau + \tau \le \tau$, we can find some $\tau_0, \tau_1, \tau_2, \tau_3$, all bi-embeddable with $\tau$, such that $\tau = \tau_0 + \tau_1 + \tau_2 + \tau_3$. Then let $B$ be formed of the disjoint union of a copy of $\tau_0$ in $A^0_s$, a copy of $\tau_1$ in $A^1_s$, a copy of $\tau_2$ in $A^0_t$, and a copy of $\tau_3$ in $A^1_t$.

    Since all of $B$ extends $\Delta (s,t)$ and $B^0_{\Delta (s,t)}$, $B^1_{\Delta (s,t)}$ both embed $\tau$ (because $\Delta (s,t)^\frown \langle 0 \rangle \sqsubseteq s$ and $\Delta (s,t) ^\frown \langle 1 \rangle \sqsubseteq t$), it follows that $\Delta (s,t)$ is the minimal $\tau$-splitting node for $B$. Then since every element of $B^0_{\Delta (s,t)}$ extends $s$ and $s$ is $\tau$-splitting for $B$ by construction, $f_B(\langle 0 \rangle) = s$, and similarly $f_B(\langle 1 \rangle) = t$.
    \end{claimproof}

    Now, using Claim \ref{foranytwotausplittings:claim}, we will show that no $A \in [\twoalphalex]^\tau$ can be homogeneous for the colouring $F$ defined above. First observe that if $A$ is such that $h(f_A(\langle 0 \rangle)) = h(f_A(\langle 1 \rangle))$, then, applying Claim \ref{foranytwotausplittings:claim} with e.g.\ $s = f_A(\langle 0 \rangle)$, $t = f_A(\langle 11 \rangle)$, we obtain some $B \in [A]^\tau$ with $h(f_B(\langle 0 \rangle)) < h(f_B(\langle 1 \rangle))$, and so $F(A) \neq F(B)$.
    
    It follows that for some $A \in [\twoalphalex]^\tau$ to be homogeneous for $F$, it must be the case either that $h(f_B(\langle 0 \rangle)) > h(f_B(\langle 1 \rangle))$ for all $B \in [A]^\tau$, or that $h(f_B(\langle 0 \rangle)) < h(f_B(\langle 1 \rangle))$ for all $B \in [A]^\tau$. But now, for any $s \lex t$ in ${}^{<\omega}2$, we can apply Claim \ref{foranytwotausplittings:claim} to $f_A(s)$ and $f_A(t)$, and obtain either that for every $s \lex t \in {}^{<\omega}2$, $h(f_A(s)) > h(f_A(t))$, or that for every $s \lex t \in {}^{<\omega}2$, $h(f_A(s)) < h(f_A(t))$. Both situations are impossible, as ${}^{<\omega}2$ contains both $\omega$-sequences and $\omega^*$-sequences in $\lex$, so in either case we would get an infinite descending sequence of ordinals.
\end{proof}\setcounter{thmcount}{\value{scratch}}


\omegaomegastar*
We first prove a useful characterisation, which will be of importance again in Section \ref{uncountable:section}:
\begin{lemma}\label{finitesumchar:lemma}
    Let $\tau$ be a well-orderable order type with $\omega\omega^* \not \le \tau$ and $\omega^* \omega \not \le \tau$. Then $\tau$ is a finite sum of ordinals and reverse ordinals.
\end{lemma}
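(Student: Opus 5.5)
We argue by induction on the Hausdorff rank of $\tau$. First, $\tau$ is scattered: a well-orderable order embedding $\eta$ would embed every countable order type, in particular $\omega\omega^*$. So Hausdorff's analysis applies (cf.\ \cite[\S5.3]{rosenstein}): every scattered well-orderable type lies in $\bigcup_\gamma VD_\gamma$, with $VD_0 = \{0,1\}$. At successor stages we close under sums $\sum_{i\in I}\sigma_i$ where $I$ is one of $\omega$, $\omega^*$, $\zeta$ or finite, and each $\sigma_i$ is of lower rank. At limit stages we take unions. The claim is that the class $\mathcal C$ of finite sums of ordinals and reverse ordinals contains every scattered $\tau$ avoiding both $\omega\omega^*$ and $\omega^*\omega$. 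We induct on the least $\gamma$ with $\tau\in VD_\gamma$. Throughout, the hypothesis is inherited by suborders, in particular by the summands $\sigma_i$, so the induction hypothesis applies to them.

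The heart of the argument is the sum step. Suppose $\tau=\sum_{i\in I}\sigma_i$ with each $\sigma_i\in\mathcal C$, and $I\in\{\omega,\omega^*,\zeta\}$. (The finite case is immediate, since $\mathcal C$ is closed under finite sums.) Take $I=\omega$, the case $\omega^*$ being symmetric and $\zeta$ following by splitting into $\omega^*+\omega$. For each $i$, decide whether $\sigma_i$ contains $\omega^*$, i.e.\ whether some reverse-ordinal summand in its decomposition is infinite. Choosing such decompositions and witnesses simultaneously needs no choice, because $\tau$ is well-orderable: we fix a well-order of $\tau$ and use it to select canonical copies. Now suppose infinitely many $\sigma_i$ contain $\omega^*$. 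Picking, via the well-order, a copy of $\omega^*$ in each of them yields $\omega^*\omega\le\tau$, a contradiction. So only finitely many contain $\omega^*$, and past some $N$ every $\sigma_i$ is a well-ordered finite sum of ordinals, hence an ordinal. Then $\sum_{i\ge N}\sigma_i$ is an ordinal, and $\tau=\sum_{i<N}\sigma_i+(\text{ordinal})$ lies in $\mathcal C$. Symmetrically, for $I=\omega^*$ we use $\omega\omega^*\not\le\tau$. This shows $\mathcal C$ is closed under the Hausdorff operations restricted to types satisfying the hypothesis. Limit stages are trivial, and the base case is clear.

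The main obstacle is the choice-free setting. We need Hausdorff's theorem, and the simultaneous selection of witnesses, to go through in \s{ZF}. This is where well-orderability of $\tau$ is used: transport a well-order of the underlying set so that everything, including the decomposition obtained from the finite-condensation iteration, is definable from it.

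As an alternative that avoids the rank induction, one could iterate the finite condensation $\mathbf c_F$ directly on $\tau$. If $\mathbf c_F(\tau)$ is neither finite nor equal to $\tau$ itself, the same ``infinitely many blocks contain $\omega^*$'' argument produces a forbidden type. I would present the rank induction as the primary route.
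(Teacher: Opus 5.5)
\textbf{Genuine gap: your hierarchy only covers countable scattered types.} Your sum step is essentially the paper's successor-rank argument, and it is correct for the hierarchy you describe. The problem is the hierarchy itself. Closing $\{0,1\}$ under sums indexed by $\omega$, $\omega^*$, $\zeta$ and finite types, with unions at limits, is Hausdorff's analysis of \emph{countable} scattered orders. It does not exhaust the well-orderable scattered types. For example, whenever $\omega_1$ is regular (in particular under \s{ZFC}), $\omega_1\notin\bigcup_\gamma VD_\gamma$: any $\omega$-, $\omega^*$-, $\zeta$- or finite-indexed decomposition of $\omega_1$ has only finitely many nonempty summands. So one summand is a final segment isomorphic to $\omega_1$ of lower rank, and iterating this contradicts well-foundedness.

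Hence your sentence ``every scattered well-orderable type lies in $\bigcup_\gamma VD_\gamma$'' is false, and the induction never reaches types such as $\omega_1+\omega_1^*$. The lemma is stated for arbitrary well-orderable $\tau$, and it is used that way: the $\varphi_\vartheta$ in the proof of Theorem \ref{omegaomegastar:thm} and the ordinals $<\kappa$ in \S\ref{uncountable:section} may be uncountable. For the same reason, ``limit stages are trivial'' is misleading.

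\textbf{What is missing, and how the paper supplies it.} You need a sum step over arbitrary ordinal and reverse-ordinal index sets. The paper gets this by inducting on finite-condensation rank. At successor rank one has a $\Z$-indexed sum, which is your step. At limit rank $\alpha$, one fixes $x$ and writes $L=\sum^*_{\beta<\alpha}L^-_\beta+\{x\}+\sum_{\beta<\alpha}L^+_\beta$, where $L^\pm_\beta$ is the part of the $\beta$-condensation class of $x$ not lying in any earlier class. Past some $\beta^+$ no $L^+_\beta$ embeds $\omega^*$, and past some $\beta^-$ no $L^-_\beta$ embeds $\omega$. So the two tails are an ordinal and a reverse ordinal, and the middle lies in a single condensation class of rank $<\alpha$.

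\textbf{How to repair your argument.} You can fix it along the same lines, or via Hausdorff's theorem for arbitrary scattered orders (closure under well-ordered and reverse-well-ordered sums). The sum step then changes slightly for a $\kappa$-indexed sum. Infinitely many $\omega^*$-embedding summands still give $\omega^*\omega$, by taking the first $\omega$ such indices. But if there are only finitely many, the part before the first one need not be a finite sum, so ``$\sum_{i<N}\sigma_i$ plus an ordinal'' no longer applies. Instead, observe that each stretch between consecutive exceptional indices is a well-ordered sum of ordinals, hence an ordinal.

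\textbf{Your alternative route is wrong as stated.} Take $\tau=\omega^2$: then $\mathbf{c}_F(\tau)\cong\omega$ is neither finite nor $\tau$, yet $\tau$ is an ordinal and contains no forbidden type.
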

\begin{proof}
    Such a $\tau$ is necessarily scattered. We will make use of iterations of the finite condensation $\mathbf{c}_F$; see \cite[pp.\ 79--80]{rosenstein} for details. The $\beta$-condensation class of $x$ in $\L$ will refer to the set of all points mapped to the same point as $x$ in the $\beta^\text{th}$ iteration of the finite condensation $\mathbf{c}_F$, i.e.\ the set denoted $\mathbf{c}^\beta(x)$ in \cite{rosenstein}.
    
    We proceed by induction on Hausdorff rank: suppose the statement is true for all $\sigma$ of lower Hausdorff rank than $\tau$. Let $\L$ be ordered as $\tau$, and write $\alpha$ for the Hausdorff rank of $\L$ (i.e.\ of $\tau$).
    If $\alpha = \alpha' + 1$ is a successor, then $\L$ can be written as a sum
    \[\L = \sum_{n \in \Z} \langle L_n,<\rangle,\]
    where each $\langle L_n,<\rangle$ has Hausdorff rank at most $\alpha'$ and some of the terms in this sum may be empty. If cofinally many of the $\langle L_n,<\rangle$ embed $\omega^*$, then $\L$ embeds $\omega^*\omega$, a contradiction by our assumption on $\tau$, so there is some $n^+ \in \Z$ such that $\langle L_n,<\rangle$ is well-ordered for every $n \ge n^+$, and so $\sum_{n \ge n^+} \langle L_n,<\rangle$ is isomorphic to an ordinal; similarly, there is $n^- \in \Z$ such that $\sum_{n \le n^-} \langle L_n,<\rangle$ is isomorphic to the reverse of an ordinal. Wlog $n^- \le n^+$. Then
    \[\L = \sum_{n \le n^-}\langle L_n,<\rangle + \sum_{n^- < n < n^+} \langle L_n,<\rangle + \sum_{n \ge n^+} \langle L_n,<\rangle,\]
    where the first and last term in the sum are a reverse well-order and a well-order, respectively, and the term in the middle is a finite sum of terms which by the inductive hypothesis can each be written as finite sums of well-orders and reverse well-orders. It follows that this is also true of $\L$.
    
    Otherwise, $\alpha$ is a limit. Fix some arbitrary $x \in L$, and write \[L = L^- \cup \{x\} \cup L^+,\] where $L^-$ is the part below $x$ and $L^+$ is the part above $x$. Now further split $L^-$ and $L^+$ into the following pieces:
    \[L^{\pm} = \bigcup_{\beta < \alpha} L^{\pm}_\beta,\]
    where for each $\beta < \alpha$, $L^{\pm}_\beta$ is the part of $L^\pm$ which is contained in the $\beta$-condensation class of $x$, but not the $\gamma$-condensation class of $x$ for any $\gamma < \beta$. We remark that for a given $\beta < \alpha$ one or the other of $L^-_\beta$, $L^+_\beta$ may be empty, but not both.
    
    We now decompose $\L$ as a sum
    \[\L = {\sum_{\beta < \alpha}}^* \langle L_\beta^-,<\rangle + \langle\{x\},<\rangle + \sum_{\beta < \alpha} \langle L_\beta^+,< \rangle,\]
    where here the $\Sigma^*$ indicates that the first sum is anti-well-ordered, i.e.\ it is a sum indexed by $\alpha^*$. By the same rationale as above, there is some $\beta^+ < \alpha$ for which $\langle L^+_\beta,<\rangle$ does not embed $\omega^*$ for all $\beta \ge \beta^+$, and some $\beta^- < \alpha$ for which $\langle L^-_\beta,<\rangle$ does not embed $\omega$ for all $\beta \ge \beta^-$. Then there are ordinals $\gamma$, $\delta$ with
    \[\L \cong \delta^* + \left({\sum_{\beta < \beta^-}}^* \langle L_\beta^-,<\rangle + \langle\{x\},<\rangle + \sum_{\beta < \beta^+} \langle L_\beta^+,< \rangle\right) + \gamma,\]
    and since the middle term ${\sum_{\beta < \beta^-}}^* \langle L_\beta^-,<\rangle + \langle\{x\},<\rangle + \sum_{\beta < \beta^+} \langle L_\beta^+,< \rangle$ is contained in a $\max(\beta^-,\beta^+)$-condensation class of $\L$, it has Hausdorff rank at most $\max(\beta^-,\beta^+) < \alpha$. It follows by the inductive hypothesis that $\L$ can be written as a finite sum of well-orders and reverse well-orders.
\end{proof}
\begin{proof}[Proof of Theorem \ref{omegaomegastar:thm}]
    \setcounter{scratch}{\value{thmcount}}
    \setcounter{thmcount}{\getrefnumber{omegaomegastar:thm}}
    The colouring we use to witness this is in fact the same for every such $\tau$; let $C: \mathcal P(\twoalpha) \to 2$ be given by, for $A \subseteq \twoalpha$,
    \[C(A) = \begin{cases*}
        0 & if whenever $x < y$ are in an infinite condensation class of $A$,\\& there is some $z \in A \setminus \{x,y\}$ with $\Delta(x,y) \sqsubseteq z$;\\
        1 & otherwise.
    \end{cases*}\]
    Equivalently, $C(A) = 1$ iff there is some $s \in \lessalpha$ such that $A \cap [s]$ is a set of size $2$ whose elements live in an infinite condensation class of $A$. We first observe the following two properties of $C$:
    \begin{claim}\label{colour0:claim}
        If $A = \mathscr F(A')$ for some $A'$, then $C(A) = 0$, where $\mathscr F$ is the canonisation map defined in Lemma \ref{canonisation:lemma}.\footnote{In fact, it can be seen that this is an equivalence, but the given definition of $C$ simplifies the rest of the argument.}
    \end{claim}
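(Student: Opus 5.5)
The plan is to verify directly the defining condition for colour $0$. Write $A = \mathscr F(A')$. We must show that whenever $x < y$ lie in an infinite condensation class of $A$, some $z \in A \setminus \{x,y\}$ satisfies $\Delta(x,y) \sqsubseteq z$. The whole argument rests on one observation. If $\delta(x,z) = \delta(x,y)$, then $\Delta(x,z)$ and $\Delta(x,y)$ are initial segments of $x$ of the same length, so they are equal. In particular $\Delta(x,y) = \Delta(x,z) \sqsubseteq z$. Hence it suffices to find such a $z$ with $\delta(x,z) = \delta(x,y)$, or symmetrically with $\delta(z,y) = \delta(x,y)$.

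\textbf{Step 1: the infinite classes of $A$ are the sets $X'$.} First I would check that the infinite condensation classes of $A$ are exactly the sets $X'$ produced in the proof of Lemma \ref{canonisation:lemma}, one for each infinite condensation class $X$ of $A'$. Each $X'$ is a subset of the convex piece $X$ of $A'$ with the same order type as $X$, so its points are pairwise finitely separated in $A$. For points $u \in X'$ and $v \in Y'$ from distinct classes of $A'$, there were infinitely many points of $A'$ between them. This is witnessed either by infinitely many classes in between, or by an infinite tail of $X$ above $u$ or of $Y$ below $v$. Finite classes are untouched, and each replaced class keeps its order type. So either witness survives in $A$, and $u,v$ remain in distinct classes. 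This bookkeeping is the only mildly fiddly point; the rest is a direct use of canonisation.

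\textbf{Step 2: case analysis.} Let $x < y$ lie in some $X'$.
\begin{itemize}
\item If $X'$ has type $\omega$, choose any $z \in X'$ with $z > y$. Canonisation gives $\delta(x,z) = \delta(x,y)$.
\item If $X'$ has type $\omega^*$, choose $z \in X'$ with $z < x$. Canonisation gives $\delta(z,y) = \delta(x,y)$, hence $\Delta(x,y) = \Delta(z,y) \sqsubseteq z$.
\item If $X'$ has type $\zeta$, write $X' = X_0' \cup X_1'$ as in Definition \ref{canonised:defn}.
\begin{itemize}
\item If $x,y$ both lie in $X_1'$ or both in $X_0'$, argue as in the $\omega$ or $\omega^*$ case respectively.
\item If $x \in X_0'$ and $y \in X_1'$, take any $z \in X_1' \setminus \{y\}$. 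The third clause of canonisation gives $\delta(x,z) = \min\{\delta(x',y') : x',y' \in X'\} = \delta(x,y)$.
\end{itemize}
\end{itemize}
In every case the observation above yields $\Delta(x,y) \sqsubseteq z$ with $z \in A \setminus \{x,y\}$. Therefore $C(A) = 0$.
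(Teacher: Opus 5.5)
Your proposal is correct and is essentially the paper's argument written out in full: in each canonised infinite class you take $z$ from the appropriate infinite tail, where canonisation forces $\delta$, and hence $\Delta$, to agree with that of the pair $x<y$. Your Step 1 is not needed, since Lemma~\ref{canonisation:lemma} already asserts that every infinite condensation class of $\mathscr F(A')$ is canonised; note also that its list of witnesses omits the case of finitely many intermediate classes, one of which is infinite (e.g.\ $A'$ of type $1+\zeta+1$), although that witness survives for the same reason, namely that replaced classes keep their order type.
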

    \begin{claimproof}
        If $A = \mathscr F(A')$ for some $A'$ then for any $x < y$ in an infinite condensation class $K$, either there are infinitely many $z \in K$ below $x$, and $\Delta(x,y) \sqsubseteq z$ for all such $z$, or there are infinitely many $z \in K$ above $y$, and again $\Delta(x,y) \sqsubseteq z$ for all such $z$.
    \end{claimproof}
    \begin{claim}\label{inhomogoninterval:claim}
        Let $\varphi,\tau$ be order types with $\varphi,\tau \le\twoalphalex$ such that $\varphi$ embeds convexly in $\tau$, i.e.\ in any copy of $\tau$ there is an interval ordered as $\varphi$. If $C$ witnesses $\twoalphalex \centernot \rightarrow (\varphi)^\varphi$ then it also witnesses $\twoalphalex \centernot \rightarrow (\tau)^\tau$.
    \end{claim}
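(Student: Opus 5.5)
The plan is to exploit the fact that $C$ is a ``local'' colouring. A witness for $C(A)=1$ is a node $s$ together with a pair $x<y$ in an infinite condensation class of $A$ such that $A\cap[s]=\{x,y\}$. Such a witness should survive when a convex piece of a larger set is modified. Fix $A\in[\twoalphalex]^\tau$; we must find $B,B'\in[A]^\tau$ with $C(B)\neq C(B')$.

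First, let $B'=\mathscr F(A)\in[A]^\tau$ from Lemma \ref{canonisation:lemma}. By Claim \ref{colour0:claim}, $C(B')=0$. It therefore suffices to find $B\in[B']^\tau$ with $C(B)=1$.

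To build $B$, use the hypothesis that $\varphi$ embeds convexly in $\tau$ and fix an interval $I\subseteq B'$ ordered as $\varphi$. Since $C$ witnesses $\twoalphalex\centernot\rightarrow(\varphi)^\varphi$, the set $I$ is not homogeneous for $C$ on $[I]^\varphi$. As $\mathscr F(I)\in[I]^\varphi$ has colour $0$, there is $J\in[I]^\varphi$ with $C(J)=1$. Put $B=(B'\setminus I)\cup J$. Because $I$ is convex in $B'$ and $J$ has the same order type as $I$, $B$ is ordered as $\tau$. Moreover $J$ is convex in $B$, so the number of points of $J$ between two elements of $J$ is the same whether counted in $J$ or in $B$. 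Hence every infinite condensation class of $J$ is contained in an infinite condensation class of $B$.

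It remains to transfer the witness. Take $x<y$ in an infinite condensation class of $J$ with $\Delta(x,y)\sqsubseteq z$ for no $z\in J\setminus\{x,y\}$, and set $s=\Delta(x,y)$. We need that no $z\in B\setminus J$ extends $s$. Since $[s]$ is lex-convex and contains $x,y$, any such $z$ would force every point of $J$ on the far side of $x$ or $y$ (towards $z$) to extend $s$. By the choice of $s$ this means $y=\max J$ (or $x=\min J$). So the only bad case is when the witnessing pair sits at an end of $J$, for instance the last two points of an $\omega^*$-class, and the neighbouring points of $B'\setminus I$ also extend $s$.

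I expect this boundary case to be the main obstacle. The first thing I would try is to use the freedom in choosing $I$ together with the canonised structure of $B'$. Since $B'=\mathscr F(A)$, the points of $B'$ adjacent to $I$ branch off at heights that are fixed by $B'$. If $z\in B'\setminus I$ extends $s=\Delta(x,y)$ with $x,y\in I$, then $\delta(x,z)\ge h(s)$, which should contradict canonisation of the relevant class unless that class is the one containing $I$'s endpoint. In that case I would shrink further by discarding finitely many points of $B'\setminus I$ adjacent to $I$; this changes neither the order type of the infinite part nor the condensation classes involved, and leaves $B\cap[s]=\{x,y\}$. Failing that, I would replace $J$ by $\mathscr F$ applied only to the part of $J$ away from its witnessing pair, so that the pair lies strictly inside $J$. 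Once $B\cap[s]=\{x,y\}$ is secured, $x,y$ lie in an infinite condensation class of $B$, so $C(B)=1\neq C(B')$, and $A$ is not homogeneous.
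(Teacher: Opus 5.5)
Your argument is correct up to the step you flag. $\mathscr F(A)$ gives colour $0$. If the witnessing pair $x<y$ of $J$ has points of $J$ outside $[\Delta(x,y)]$ on both sides, then lex-convexity of the cone and convexity of $J$ in $B$ give $C(B)=1$. The boundary case, however, is a genuine gap, and none of your three remedies closes it.
\begin{enumerate}[(i)]
\item Canonisation of $B'=\mathscr F(A)$ only constrains splits between points of a single condensation class of $B'$. Take $\varphi=(\omega^*\omega+\omega^*)\cdot\omega^*$ and $\tau=\varphi+\omega^*\omega$; here $\varphi$ embeds into proper initial segments of itself. The colour-$1$ subcopy $J$ supplied by the hypothesis may consist of all of $I$ except its last block, together with two points $x<y=\max I$ of the top class of $I$; this has type $\varphi+2\cong\varphi$. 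The predecessor $x'$ of $x$ in $J$ then lies in a different class of $B'$, so $\delta(x',x)<\delta(x,y)$ is allowed. A point $z$ above $I$ lies in $[\Delta(x,y)]$ iff $\delta(y,z)\ge\delta(x,y)$, which is a cross-class condition on which $\mathscr F$ imposes nothing.
\item The points of $B\setminus J$ inside the cone form an initial segment of $B\cap(y,\rightarrow)$. When the part of $\tau$ after $\varphi$ has no least element, as in this example, that segment is nonempty with no least element. So discarding finitely many points cannot remove it.
\item Applying $\mathscr F$ to other parts of $J$ never moves the pair away from $\max J$.
\end{enumerate}
In short, the bare hypothesis that $C$ witnesses $\twoalphalex\centernot\rightarrow(\varphi)^\varphi$ gives you some colour-$1$ subcopy, not one whose witness is interior.

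The missing ingredient is exactly such an interior witness. The natural repair is to transfer a stronger property: every copy of $\varphi$ has a subcopy $J$ whose witnessing pair is flanked on both sides by points of $J$ outside its cone. Your transplant argument then works verbatim, and the stronger property passes from $\varphi$ to $\tau$. It is also what the construction in the proof of Theorem~\ref{omegaomegastar:thm} actually produces: $y_{-1},y_0$ sit between $A\cap(\leftarrow,x_1)$ and $A_1'$, neither of which meets the cone. So it can be carried through that induction.

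For comparison, the paper takes a different route. It leaves the $\varphi$-piece $A^*$ untouched and canonises only the outer pieces, after shifting adjacent finite blocks into the $\varphi$-piece. It then asserts $C(\mathscr F(A_0)\cup X\cup\mathscr F(A_1))=C(X)$ for $X=A^*,A^\dagger$. That equality is your boundary case in disguise, and it is not justified there. Take $\varphi=\omega^*\omega+\omega^*$ and $\tau=\varphi+\omega^*\omega$. Choose $A^*$ whose only witness is the top pair $\ell_1<\ell_0$ of its last class, and let every point of $A_1$ extend $\Delta(\ell_1,\ell_0)$. Then $C(A')=0\neq1=C(A^*)$.

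So the paper's route does not close your gap either. Your ``canonise everything first'' strategy is the better starting point: it gives colour $0$ for free and isolates survival of the witness as the only issue.
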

    \begin{claimproof}
        Write $\tau = \sigma_0 + \varphi + \sigma_1$ for some $\sigma_0$, $\sigma_1$, and assume without loss of generality that $\sigma_0$ does not end with a finite condensation class which is part of an infinite condensation class of $\tau$, and symmetrically for $\sigma_1$.\footnote{Formally, if $\varphi$ is of the form $\omega + \varphi'$ and $\sigma_0$ is of the form $\sigma_0'+ n$ for some finite $n$ and $\sigma_0'$ with no maximal element, replace $\sigma_0$ by $\sigma_0'$, and similarly if $\varphi$ is of the form $\varphi' + \omega^*$ and $\sigma_1$ is of the form $n + \sigma_1'$ for some $\sigma_1'$ with no minimal element, replace $\sigma_1$ by $\sigma_1'$.} Given $A \in [\twoalphalex]^\tau$, fix a decomposition of $A$ as $A_0 \cup A^* \cup A_1$, where the $A_i$ are ordered as the $\sigma_i$ and $A^*$ is ordered as $\varphi$; reduce to $A' \in [A]^\tau$ given by
        \[A' \coloneqq \mathscr F(A_0) \cup A^* \cup \mathscr F(A_1).\]
        Then $C(A') = C(A^*)$. It follows that if $A^*$ is not homogeneous for $C$, there is some $A^\dagger \in [A^*]^\varphi$ with $C(A^\dagger) \neq C(A^*)$; then, defining $A''\in [A']^\tau$ by \[A'' \coloneqq \mathscr F(A_0) \cup A^\dagger \cup \mathscr F(A_1),\] we have $C(A'') \neq C(A')$.
    \end{claimproof}
    We will prove the following statement by induction on Hausdorff rank:
    \[
        \omega\omega^* \le \tau \text{ or }\omega^*\omega \le \tau \implies C \text{ witnesses } \twoalphalex \centernot \rightarrow (\tau)^\tau. \tag{$\ast$}
    \]
    Suppose there is some well-orderable scattered order for which $(*)$ does not hold; let $\tau$ be such an order type of minimal Hausdorff rank. Then in particular $\omega\omega^* \le \tau$ or $\omega^*\omega \le \tau$. We will appeal to a result of Laver and a result of Jullien:\footnote{We note that both results were proved in \s{ZFC}, but even without examining the proofs to see where Choice was used, we see that both still hold in \s{ZF}; given some $\tau$ scattered and well-orderable, we may reduce to some inner model $W$ of \s{ZFC} containing a well-ordering of an order of type $\tau$, and apply Fact \ref{sumofindecs:fact} in $W$ to find a decomposition of $\tau$ into indecomposable$^W$ pieces $\tau = \tau_0 + \tau_1 + \dots + \tau_n$; each $\tau_k$ is strictly indecomposable to either the right or the left in $W$, by Fact \ref{indecisstrict:fact}, and so must remain so in $V$, as it is still scattered there.}
    \begin{subfact}\label{sumofindecs:fact} \textnormal{(\!\!\cite{laver})}
        Any scattered linear order can be written as a finite sum of indecomposable scattered linear orders.
    \end{subfact}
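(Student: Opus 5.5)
The plan is to derive the decomposition from the well-foundedness of the embeddability quasi-order on scattered linear orders, by a minimal-counterexample argument. As the footnote preceding the Fact explains, we only need it inside an inner model $W$ of \s{ZFC}, so I will argue in \s{ZFC} throughout. For order types $\sigma,\tau$ write $\sigma\le\tau$ for embeddability, and $\sigma<\tau$ if $\sigma\le\tau$ but $\tau\not\le\sigma$.

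\textbf{Step 1: the key input.} I will use Laver's theorem, the resolution of Fra\"iss\'e's conjecture: the scattered linear orders are well-quasi-ordered, in fact better-quasi-ordered, under $\le$. The only consequence needed here is that $<$ is well-founded on scattered order types, i.e.\ there is no sequence $\tau_0>\tau_1>\tau_2>\cdots$. This is the main obstacle. It is a deep result whose proof runs through Nash-Williams' theory of better-quasi-orders and Hausdorff's inductive description of scattered orders, and I would simply cite it rather than reprove it. The rest of the argument is elementary once well-foundedness is available.

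\textbf{Step 2: choosing a minimal counterexample.} Suppose some scattered $\L$ is not a finite sum of indecomposable scattered orders. To obtain a minimal counterexample as a set, consider the family $\mathcal C$ of all $X\subseteq L$ such that $\langle X,<\rangle$ is not a finite sum of indecomposables; $L\in\mathcal C$, so $\mathcal C\neq\emptyset$. Every suborder of $L$ is scattered. By well-foundedness of $<$ on these order types (using \s{AC} in $W$ to pass from the absence of infinite descending sequences to the existence of minimal elements), fix $X\in\mathcal C$ such that no $Y\in\mathcal C$ has $\otp\langle Y,<\rangle<\otp\langle X,<\rangle$.

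\textbf{Step 3: splitting the minimal counterexample.} $X$ is not indecomposable, since otherwise it would be a one-term sum. So there is a decomposition $X=A\cup B$ into an initial segment $A$ and a final segment $B$ such that $X$ embeds into neither $\langle A,<\rangle$ nor $\langle B,<\rangle$. Both $A$ and $B$ are suborders of $X$, so $\otp A\le\otp X$ and $\otp B\le\otp X$. Since $X$ embeds into neither, in fact $\otp A<\otp X$ and $\otp B<\otp X$. Each of $A$ and $B$ is a subset of $L$, so by minimality of $X$ neither lies in $\mathcal C$. Hence each is a finite sum of indecomposable scattered orders, and concatenating these two finite sums writes $X=A+B$ as a finite sum of indecomposables, contradicting $X\in\mathcal C$.

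Possibly empty pieces cause no trouble: they can simply be dropped from the sums, and a one-point order is trivially indecomposable. With Laver's well-foundedness theorem in hand, the argument is therefore complete.
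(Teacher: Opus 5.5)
Your proof is correct. The paper gives no proof of this Fact beyond the citation to Laver, and your minimal-counterexample argument is the standard deduction of it from Laver's well-quasi-ordering theorem. It relies on the well-foundedness of strict embeddability on scattered types, and it handles the set-theoretic details properly: you choose among subsets of $L$ to get a set, use choice in $W$ to obtain a minimal element, and observe that both pieces of a witnessing decomposition are strictly below $X$.
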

    \begin{subfact}\label{indecisstrict:fact}
    \textnormal{(\hspace{-0.001em}\cite{jullienthesis})}
        A scattered order is indecomposable iff it is either strictly indecomposable to the right or strictly indecomposable to the left.
    \end{subfact}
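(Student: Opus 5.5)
The direction from right to left is immediate: if $\L$ is strictly indecomposable to the left (say), then for every decomposition into an initial segment $A$ and a final segment $B$, $\L$ embeds in $A$. In particular it embeds in one of the two pieces, so $\L$ is indecomposable. All the work is in the forward direction.

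Suppose $\L$ is scattered and indecomposable but strictly indecomposable to neither side. I would first rephrase the hypotheses. By indecomposability, $\L$ embeds in at least one piece of every cut. So failure of left-strictness means there is a cut $(A,B)$ with $\L \le B$. Symmetrically, failure of right-strictness gives a cut $(A',B')$ with $\L \le A'$. The aim is to derive $\L + \L \le \L$. From this, exactly as in the footnote on Ginsburg--Morel after Theorem \ref{tauplustau:thm}, iterating the embedding along ${}^{<\omega}2$ yields disjoint nested intervals and hence a copy of $\eta$ inside $L$, contradicting scatteredness.

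Next I would split on the relative position of the two cuts.
\begin{enumerate}
    \item If $A' \subseteq A$, then $A'$ and $B$ are disjoint, with $A' < B$. Since $\L \le A'$ and $\L \le B$, we get $\L + \L \le \L$ directly.
    \item If $A \subsetneq A'$, set $M = A' \cap B$, the middle interval. Fix embeddings $f\colon L \to B$ and $g\colon L \to A'$, and pull the other cut back along each of them. Pulling cut $1$ back along $g$ and applying indecomposability gives either $\L \le A$ or $\L \le M$. Pulling cut $2$ back along $f$ gives either $\L \le M$ or $\L \le B'$.
    \item If $\L \le A$, then together with $\L \le B$ we get $\L + \L \le \L$. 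If $\L \le B'$, then together with $\L \le A'$ we are back in the first case.
\end{enumerate}

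This leaves the residual subcase where $\L$ embeds into the bounded interval $M$ and nothing better has been found. I expect this crossing case to be the main obstacle. What I would try first is to iterate: choose $h\colon L \to M$ and transport the two cuts through $h$. This produces a new pair of cuts with the same properties, now lying strictly inside $M$, and we can ask again whether they separate two disjoint copies. Pure iteration might not terminate in \s{ZF} without a rank measure. So the fallback is to run the argument by induction on Hausdorff rank, in the style of Lemma \ref{finitesumchar:lemma}.

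For that induction I would use the known structure of indecomposable scattered orders: they are $\omega$-sums or $\omega^*$-sums of indecomposables of lower rank, which follows from the condensation analysis already used in the proof of Lemma \ref{finitesumchar:lemma}. One then checks that an $\omega$-sum $\sum_n L_n$ which is indecomposable must be strictly indecomposable to the right. Indeed, every final segment contains a tail $\sum_{n \ge k} L_n$, and this tail still embeds $\L$, since the summands recur cofinally. Conversely, if $\L$ embedded in a proper initial segment, that initial segment would be bounded inside some finite partial sum. Iterating that embedding would then again produce $\L + \L \le \L$, contradicting scatteredness as above. The $\omega^*$-sum case is symmetric.
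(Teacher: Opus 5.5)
The easy direction is fine, and so is everything in the forward direction up to the crossing case. One small correction there: you should require the cut witnessing $L\le B$ to have $A\neq\emptyset$, and the one witnessing $L\le A'$ to have $B'\neq\emptyset$. Otherwise the trivial cuts make both failures vacuous, and $M$ need not be bounded.

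The real gap is the residual case $L\le M$, $L\not\le A$, $L\not\le B'$. This is where the whole content of the theorem lies, and neither of your strategies closes it. (The paper itself only cites this Fact from Jullien, so your argument has to stand on its own.)

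\begin{itemize}
\item Transporting the cuts along $h\colon L\to M$ only produces nested intervals $(a,b)\supsetneq(h(a),h(b))\supsetneq\cdots$. Scattered orders contain such configurations without any trouble.
\item The rank-induction fallback rests on a structure theorem that is not available. Under \s{AC}, $\omega_1$ is scattered and indecomposable. Because it has uncountable cofinality and a least element, it is neither an $\omega$-sum nor an $\omega^*$-sum. The paper applies the Fact to well-orderable $\tau$ of arbitrary cofinality $\kappa$, so this case matters.
\item Nor does such a structure follow from the condensation analysis of Lemma~\ref{finitesumchar:lemma}. That analysis only gives two-sided decompositions, indexed by $\mathbb{Z}$, or by $\alpha^*$ on one side of a point and $\alpha$ on the other. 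Deciding on which side the sum runs is exactly what the Fact asserts, so even for countable orders the fallback is circular.
\item Your last step is also unsupported. An embedding of $L$ into $\sum_{n<k}L_n$ does not by itself yield a second, disjoint copy of $L$, so ``iterating'' it gives no $L+L\le L$.
\end{itemize}

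The missing idea is to apply indecomposability to a cut that is invariant under self-embeddings, instead of to the cuts you were handed. Let $Q=\{z\in L: L\le(z,\rightarrow)\}$ and $P=\{z\in L: L\le(\leftarrow,z)\}$. Then $Q$ is an initial segment and $P$ a final segment. Both are closed under every embedding $e\colon L\to L$: if $f$ embeds $L$ into $(z,\rightarrow)$, then $e\circ f$ embeds $L$ into $(e(z),\rightarrow)$, and symmetrically for $P$. Given indecomposability, failure of strictness on the right (resp.\ left) says precisely that $P\neq\emptyset$ (resp.\ $Q\neq\emptyset$).

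Now apply indecomposability to the cut $(Q,L\setminus Q)$.
\begin{itemize}
\item If some $e$ embeds $L$ into $L\setminus Q$, it maps $Q$ into $Q\cap(L\setminus Q)=\emptyset$, so $Q=\emptyset$.
\item If some $e$ embeds $L$ into $Q$, then for any $p\in P$ the point $r=e(p)$ lies in $P\cap Q$. So $L\le(\leftarrow,r)$ and $L\le(r,\rightarrow)$, i.e.\ $L+1+L\le L$, and your ${}^{<\omega}2$-iteration gives $\eta\le L$.
\end{itemize}
This proves the forward direction in \s{ZF} in a few lines. It needs no case split on the position of the cuts and no induction on Hausdorff rank.
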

    By Claim \ref{inhomogoninterval:claim} and Fact \ref{sumofindecs:fact}, we may assume that $\tau$ is indecomposable. It then follows by Fact \ref{indecisstrict:fact} that $\tau$ 
    is either strictly indecomposable to the right, i.e.\ embeds in all of its final segments, or strictly indecomposable to the left. Wlog it is strictly indecomposable to the right. Let $\kappa = \cf(\tau)$, which necessarily exists as $\tau$ is well-orderable; then fix a decomposition of $\tau$ as
    \[\tau = \sum_{\vartheta < \kappa} \varphi_\vartheta.\]
    
    These $\varphi_\vartheta$ are then well-orderable scattered orders of lower Hausdorff rank than $\tau$, so by our minimality assumption, they satisfy $(*)$. In particular, if $\omega\omega^* \le \varphi_\vartheta$ or $\omega^*\omega \le \varphi_\vartheta$ for some $\vartheta$, $C$ witnesses $\twoalphalex \centernot \rightarrow (\varphi_\vartheta)^{\varphi_\vartheta}$, and so by Claim \ref{inhomogoninterval:claim} it witnesses $\twoalphalex \centernot \rightarrow (\tau)^\tau$, so $\tau$ satisfies $(*)$, a contradiction. It therefore must be the case that $\omega\omega^* \not\le \varphi_\vartheta$ and $\omega^*\omega \not\le \varphi_\vartheta$ for each $\vartheta$, so the $\varphi_\vartheta$ are finite sums of ordinals and reverse ordinals by Lemma \ref{finitesumchar:lemma}. It is therefore impossible for $\omega\omega^* \le \tau$, so it must be the case that $\omega^*\omega \le \tau$, and in particular this requires that cofinally many of the $\varphi_\vartheta$ contain infinite reverse ordinals, and so contain intervals ordered as $\omega^*$ (which are either condensation classes or are included in condensation classes ordered as $\zeta$).\footnote{If instead $\tau$ was strictly indecomposable to the left, we would have $\omega^*\omega \not\le \tau$ and $\omega\omega^* \le \tau$.}
    
    Now let $A \in [\twoalphalex]^\tau$, and wlog $A = \mathscr F(A)$, so $C(A) = 0$; we will show that we can find $A' \in [A]^\tau$ with $C(A') = 1$. Let $x_0 < x_1$ be consecutive elements of some interval of $A$ ordered as $\omega^*$ with the property that $\delta(x_0,x_1)$ is minimal amongst \[\{\delta(a,b): a < b\text{ consecutive members of an interval of }A\text{ ordered as }\omega^*\}.\]
    Now, since there are cofinally many intervals in $\tau$ ordered as $\omega^*$, we may find some later such interval, $O$, say; then we may find $y_{-1} < y_0 < y_1$ consecutive elements of $O$ such that $\delta(y_0,y_1)$ is minimal in
    \[\{\delta(a,b) : a, b \in O\}.\]
    Then $\delta(y_{-1},y_0) > \delta(y_0,y_1) \ge \delta(x_0,x_1)$ by minimality. We are now ready to define our $A' \in [A]^\tau$. Write $\tau = \tau_0 + \tau_1$, where $\tau_0$ is the order type of $A \cap (\leftarrow,x_1)$ and $\tau_1$ is the order type of $A \cap [x_1,\rightarrow)$. Then since $\tau$ is strictly indecomposable on the right, we may find a copy of $\tau_1$ in the final segment $A \cap (y_0,\rightarrow)$ of $A$, $A_1'$, say; then set
    \[A' \coloneqq (A\cap (\leftarrow,x_1)) \cup \{y_{-1},y_0\} \cup A_1'.\]

    We claim that $C(A') = 1$, and that this is witnessed by $y_{-1},y_0$. Since $\Delta(y_{-1},y_0) \sqsupseteq \Delta(y_0,y_1)^\frown\langle0\rangle $ and for any $z \in A'$ with $z > y_0$, \[z \ge_\text{lex} y_1 \sqsupseteq \Delta(y_0,y_1)^\frown \langle 1\rangle,\] no $z > y_0$ extends $\Delta(y_{-1},y_0)$. By minimality of $\delta(x_0,x_1)$, we have that $x_0 \not \sqsupseteq \Delta(y_0,y_1)$, so since any $z \in A'$ with $z < y_{-1}$ has $z \le x_0$, it follows that any such $z$ has $z \not \sqsupseteq \Delta(y_0,y_1)$, and in particular $z \not \sqsupseteq \Delta(y_{-1},y_0)$. Thus, the only elements of $A'$ extending $\Delta(y_{-1},y_0)$ are $y_{-1}$ and $y_0$, and so $C(A') = 1$.
\end{proof}\setcounter{thmcount}{\value{scratch}}

\section{The behaviour of $\twoalphalex$ for $\alpha$ countable}\label{countable:section}

The results in this section can be summarised as follows:
\similartor*
We therefore have that the behaviour of the relation $\L \rightarrow (\tau)^\tau$ with $\tau$ countable is identical in the setting $\L = \Rr$ and the setting $\L = \twoalphalex$ for $\alpha$ any other countable ordinal; cf.\ \cite[Theorem 1]{ieprsonr}. We remark here that the ordinal IEPR $\omega \rightarrow (\omega)^\omega$ enjoys a special significance in the study of IEPRs on ordinals; it is known to be consistent with \s{ZF} relative to an inaccessible cardinal, by work of Mathias in \cite{mathias}, building on work of Solovay in \cite{solovay}, and it is also a consequence of the axiom \s{AD}$_\R$, the \emph{Axiom of Determinacy for games on} $\R$, by a result of Příkrý in \cite{prikryadr}. It is open whether this inaccessible is necessary, i.e.\ whether $\omega \rightarrow (\omega)^\omega$ has consistency strength greater than that of \s{ZF}; it is also open whether it follows from \s{AD}, which is weaker than \s{AD}$_\R$.

We prove Theorem \ref{similartor:thm} by means of a number of lemmas. 
Recall that we write $\mathscr B : \alpha \to \omega$ to refer to some fixed bijection, and let us write $\mathscr N(x,y) = \mathscr B(\delta(x,y))$ for the natural number corresponding to the level $< \alpha$ of the split between the nodes $x,y \in \twoalpha$.
We will show the negative direction of Theorem \ref{similartor:thm} (i.e.\ the statement that if $\tau$ is a countable type not of the form $\omega + k$ or $k + \omega^*$, then $\twoalphalex \centernot \rightarrow (\tau)^\tau$) by means of a series of lemmas, together with Theorem \ref{tauplustau:thm}. 
We first introduce two key notions which will help us in building colourings with no homogeneous sets.
\begin{defn}\label{coherentselector:defn}
    For $\L$ a linear order and $\sigma \le \tau$ order types, a partial function
    \[f: [\L]^\tau \rightharpoonup [\L]^\sigma\]
    is called a \emph{coherent selector} if it satisfies the following conditions for all $A \in \dom f$:
    \begin{enumerate}[(a)]
        \item $f(A) \subseteq A$;
        \item For all $B \in [f(A)]^\sigma$, $A' \coloneqq (A\setminus f(A)) \cup B \in \dom f$, and moreover $f(A') = B$.
    \end{enumerate}
\end{defn}
The existence of coherent selectors allows us to get a sort of approximation to monotonicity in the exponent; coherent selectors pick out subcopies of a ``smaller" order type in a sufficiently uniform way that we can lift negative relations:
\begin{obs}\label{coherentexponentlift:obs}
    Let $\L$ be a linear order, $\sigma \le \tau$ order types and $\chi$ a set. If \[\L \centernot \rightarrow (\sigma)^\sigma_\chi\] and there is a coherent selector
    \[f : [\L]^\tau \rightharpoonup [\L]^\sigma\]
    such that $\dom f$ is dense in $[\L]^\tau$, then $\L \centernot \rightarrow (\tau)^\tau_\chi$.
\end{obs}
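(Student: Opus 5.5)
Fix a colouring $G : [\L]^\sigma \to \chi$ with no homogeneous set, witnessing $\L \centernot\rightarrow (\sigma)^\sigma_\chi$. Let $f$ be the given coherent selector, with $\D \coloneqq \dom f$ dense in $[\L]^\tau$. The plan is to transport $G$ along $f$: define $F : \D \to \chi$ by
\[F(A) \coloneqq G(f(A)).\]
We then show that $F$ has no homogeneous set in the sense of Observation \ref{denseisenough:obs}. Once this is done, that observation extends $F$ to a colouring of all of $[\L]^\tau$ with no homogeneous set, which gives $\L \centernot \rightarrow (\tau)^\tau_\chi$.

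The key step is the inhomogeneity check. Let $A \in \D$. Since $f(A) \in [\L]^\sigma$ and $G$ has no homogeneous set, the set $f(A)$ is not homogeneous for $G$. So there are $B_0, B_1 \in [f(A)]^\sigma$ with $G(B_0) \neq G(B_1)$. One of these differs in colour from $G(f(A))$, unless $f(A)$ is itself one of them; in every case we obtain some $B \in [f(A)]^\sigma$ with $G(B) \neq G(f(A))$. Set
\[A' \coloneqq (A \setminus f(A)) \cup B.\]
By clause (b) of Definition \ref{coherentselector:defn}, $A' \in \D$ and $f(A') = B$. Hence
\[F(A') = G(B) \neq G(f(A)) = F(A).\]

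It remains to check that $A' \in [A]^\tau$, so that $A'$ is a legitimate subcopy of $A$. Clearly $A' \subseteq A$, since $B \subseteq f(A) \subseteq A$ by clause (a). The order type of $A'$ is $\tau$ because $A' \in \dom f \subseteq [\L]^\tau$; this is exactly what clause (b) provides. So the definition of coherent selector is designed to make this step work, and no genuine obstacle arises. The only point needing care is that Observation \ref{denseisenough:obs} asks for $A' \in [A]^\tau \cap \D$ with differing colour, and the above argument supplies precisely such an $A'$.
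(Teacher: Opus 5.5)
Your proposal is correct and matches the paper's proof: compose a colouring of $[\L]^\sigma$ with no homogeneous set with $f$, obtaining a colouring on the dense set $\dom f$ with no homogeneous set, and then apply Observation \ref{denseisenough:obs}. The paper states this in one line; you additionally carry out the inhomogeneity check via clause (b) of the definition of a coherent selector, which the paper leaves implicit.
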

\begin{proof}
    Let $F: [\L]^\sigma \to \chi$ be a colouring with no homogeneous set; then $F \circ f$ is a colouring defined on a dense subset of $[\L]^\tau$ (namely $\dom f$) with no homogeneous set.
\end{proof}
A further strengthening of this notion of coherent selectors will allow for the direct construction of colourings with no homogeneous set.
\begin{defn}\label{mutuallycoherent:defn}
    Let $\L$ be a linear order and $\sigma_0, \sigma_1, \tau$ order types with $\sigma_0,\sigma_1 \le \tau$. Coherent selectors $f_0 : [\L]^\tau \rightharpoonup [\L]^{\sigma_0}$ and $f_1 : [\L]^\tau \rightharpoonup [\L]^{\sigma_1}$ are said to be \emph{mutually coherent} if the following conditions hold for each $i$:
    \begin{enumerate}[(a)]
        \item $\dom f_0 = \dom f_1$;
        \item For $A \in \dom f_i$, $f_0(A) \cap f_1(A) = \emptyset$,
    \end{enumerate} and in addition, given $A \in \dom f_i$ and writing $A'$ for the result of replacing each of $f_0(A)$, $f_1(A)$ by some subcopies $B_0 \in [f_0(A)]^{\sigma_0}$, $B_1 \in [f_1(A)]^{\sigma_1}$, i.e.\ \[A' \coloneqq (A \setminus (f_0(A) \cup f_1(A))) \cup (B_0 \cup B_1),\] we have that
    \begin{enumerate}[(a)]
        \setcounter{enumi}{2}
        \item $A' \in \dom{f_i}$;
        \item $f_i(A') = B_i$.
    \end{enumerate}
\end{defn}
The significance of this definition comes from the following result:
\begin{lemma}\label{twomutuallycoherent:lemma}
    Let $\alpha$ be a countable ordinal and let $\tau$ be an order type such that, for some $\sigma_0, \sigma_1 \in \{\omega,\omega^*\}$ there exists a pair of mutually coherent selectors $f_0: [\twoalphalex]^\tau \rightharpoonup [\twoalphalex]^{\sigma_0}$ and $f_1: [\twoalphalex]^\tau \rightharpoonup [\twoalphalex]^{\sigma_1}$ whose (common) domain is dense in $[\twoalphalex]^\tau$. Then \[\twoalphalex \not \rightarrow (\tau)^\tau.\]
\end{lemma}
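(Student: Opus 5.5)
The plan is to define an explicit two-colouring on the common domain $\D \coloneqq \dom f_0 = \dom f_1$ by comparing the ``least splitting index'' of $f_0(A)$ with that of $f_1(A)$. I will then check that it has no homogeneous set in the sense of Observation \ref{denseisenough:obs}; that observation, applied to the dense set $\D$, gives $\twoalphalex \centernot\rightarrow (\tau)^\tau$. Countability of $\alpha$ enters only through the bijection $\mathscr B:\alpha\to\omega$, via $\mathscr N(x,y)=\mathscr B(\delta(x,y))\in\omega$. For an infinite $X\subseteq\twoalpha$ put
\[m(X)\coloneqq\min\{\mathscr N(x,y): x\neq y\in X\}\in\omega,\]
and for $A\in\D$ let $F(A)=0$ if $m(f_0(A))<m(f_1(A))$ and $F(A)=1$ otherwise.

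The key auxiliary fact is the following. For any $X\in[\twoalphalex]^\sigma$ with $\sigma\in\{\omega,\omega^*\}$ and any $k\in\omega$, there is $Y\in[X]^\sigma$ with $m(Y)>k$. To prove it, first pass to the canonised subcopy $X'=\mathscr F(X)$ from Lemma \ref{canonisation:lemma}. In the $\omega$ case, write $X'=\{x_0<x_1<\cdots\}$ and $\delta_n\coloneqq\delta(x_n,x_m)$ for $m>n$. From $\delta(x_n,x_{n+2})=\min(\delta_n,\delta_{n+1})=\delta_n$ we get $\delta_n\le\delta_{n+1}$. Equality is impossible: if $\delta_n=\delta_{n+1}=\delta$, then $x_{n+1}$ and $x_{n+2}$ both exceed $x_n$ at the first difference, so both take value $1$ at $\delta$ and cannot split there. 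Hence the $\delta_n$ are strictly increasing, so they are pairwise distinct. Since $\mathscr B$ is injective, only finitely many $n$ have $\mathscr B(\delta_n)\le k$. Every pair in $X'$ splits at some $\delta_n$, so a suitable tail $Y$ of $X'$ has $m(Y)>k$, and it still has type $\omega$. The $\omega^*$ case is symmetric.

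Now let $A\in\D$; I will find $A'\in[A]^\tau\cap\D$ with $F(A')\neq F(A)$. Suppose first $F(A)=0$, i.e.\ $m(f_0(A))<m(f_1(A))$. Apply the auxiliary fact to $f_0(A)$ with $k=m(f_1(A))$ to obtain $B_0\in[f_0(A)]^{\sigma_0}$, and set $B_1=f_1(A)$. Put $A'=(A\setminus(f_0(A)\cup f_1(A)))\cup B_0\cup B_1$. This lies in $[A]^\tau$, since we only replaced the disjoint pieces $f_i(A)$ by subcopies of the same types. By mutual coherence (conditions (c) and (d)), $A'\in\D$ and $f_i(A')=B_i$, so $m(f_0(A'))>m(f_1(A'))$ and $F(A')=1$. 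If instead $F(A)=1$, shrink $f_1(A)$ in the same way to get $m(f_1(A'))>m(f_0(A))$, giving $F(A')=0$.

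So $F$ has no homogeneous set on the dense set $\D$, and Observation \ref{denseisenough:obs} finishes the proof. The only point needing real care is the auxiliary fact. For $\alpha>\omega$, a single level $\delta$ may carry infinitely many nodes, so one cannot simply argue that infinitely many distinct meet nodes have infinitely many distinct levels. It is the canonisation step that forces the split levels to be pairwise distinct, after which countability of $\alpha$ converts this into unboundedness in $\omega$.
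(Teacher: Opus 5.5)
Your proof is correct and follows essentially the same route as the paper: a two-colouring on $\D$ that compares an $\omega$-valued splitting invariant of $f_0(A)$ with that of $f_1(A)$, with the colour flipped by using mutual coherence to thin out one selected sequence while keeping the other fixed. The paper's invariant is just $\mathscr N(x_0^A,x_1^A)$, and it passes to tails, relying without proof on the fact that $\mathscr N(x_k^A,x_{k+1}^A)$ takes arbitrarily large values as $k$ varies; your minimum over all pairs combined with canonisation makes that unboundedness explicit. One small correction: $A'\in[A]^\tau$ follows from condition (c), since $\D\subseteq[\twoalphalex]^\tau$, and not from replacing pieces by subcopies of the same type, which in general can change the order type.
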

\begin{proof}
    Let $\alpha$, $\tau$ be as given and fix some such $f_0,f_1$. Write $\D = \dom f_0 = \dom f_1$, a dense subset of $[\twoalphalex]^\tau$ by assumption. For $A \in \D$, say $f_0(A) = \{x^A_n : n \in \omega\}$, $f_1(A) = \{y^A_n : n \in \omega\}$, where in each case these enumerations are increasing if $\sigma_i = \omega$ and decreasing if $\sigma_i = \omega^*$. Define a colouring $F: \D \to 2$ by, for $A \in \D$,
    \[F(A) = \begin{cases*}
        0 & if $\mathscr N(x_0^A,x_1^A) \ge \mathscr N(y_0^A,y_1^A)$\\
        1 & if $\mathscr N(x_0^A,x_1^A) < \mathscr N(y_0^A,y_1^A)$.
    \end{cases*}\]
    If we set $A' = A\setminus \{x_0, \dots, x_{k-1}\}$ for some $k$, by the mutual coherence of $f_0$ and $f_1$ we have $x_n^{A'} = x_{n + k}^A$ and $y_n^{A'} = y_n^A$ for all $n \in \omega$; it follows that $A' \in [A]^\tau$ has $\mathscr N(y_0^{A'},y_1^{A'}) = \mathscr N(y_0^A,y_1^A)$ and $\mathscr N(x_0^{A'},x_1^{A'}) = \mathscr N(x_k^A,x_{k+1}^A)$, which takes arbitrarily large natural number values, so with an appropriate choice of $k$ we can find such an $A'$ with $F(A') = 0$. Similarly, by instead setting $A' = A \setminus \{y_0,\dots,y_{k-1}\}$ for some $k$ we can find $A' \in [A]^\tau$ with $F(A') = 1$. It follows that no $A \in \D$ can be homogeneous for $F$, so $\twoalphalex \centernot \rightarrow (\tau)^\tau$ by Observation \ref{denseisenough:obs}.
\end{proof}
\begin{lemma}\label{twoomegageneral:lemma}
    Let $\tau$ be an order type with at least two condensation classes ordered as $\omega$ or $\omega^*$, and let $\alpha$ be countable. Then $\twoalphalex \not \rightarrow (\tau)^\tau$.
\end{lemma}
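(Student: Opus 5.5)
The plan is to apply Lemma \ref{twomutuallycoherent:lemma}. We need two mutually coherent selectors $f_0,f_1$ defined on a dense subset of $[\twoalphalex]^\tau$, with values in $[\twoalphalex]^{\sigma_0}$ and $[\twoalphalex]^{\sigma_1}$ for some $\sigma_0,\sigma_1\in\{\omega,\omega^*\}$. The natural candidates are two of the condensation classes of $\tau$ ordered as $\omega$ or $\omega^*$. The difficulty is that such classes need not be definable from $A$ alone. A set $A\in[\twoalphalex]^\tau$ may have many condensation classes of type $\omega$ or $\omega^*$, and there is no choice, so we cannot simply pick two of them. We also need the choice to be stable when a class is replaced by a subcopy.

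The key steps are as follows.

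\emph{Step 1: choose the domain.} Let $\D$ consist of those $A$ with $A=\mathscr F(A)$, that is, all infinite condensation classes canonised (Lemma \ref{canonisation:lemma}). We further require that, among the condensation classes of $A$ of type $\omega$ or $\omega^*$, some class strictly minimises the ``base split level'' $\mathscr N$, and that some other class strictly minimises it among the remaining ones. Here the base split of a canonised $\omega$-class $\{x_n\}$ is $\mathscr N(x_0,x_1)$, and symmetrically for $\omega^*$.

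\emph{Step 2: define the selectors.} Let $f_0(A)$ be the class with the least base value and $f_1(A)$ the class with the second least; these are disjoint.

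\emph{Step 3: check coherence.} Replace $f_i(A)$ by a subcopy $B_i$; a subcopy of a canonised $\omega$-class is again canonised. The risk is that the base values of $B_0,B_1$ rise above those of other classes, which would destroy condition (d). To prevent this, I would strengthen the ordering on classes. Instead of comparing the base split $\mathscr N(x_0,x_1)$, compare the value $\min_n \mathscr N(x_n,x_{n+1})$ over the whole class, or better, use a quantity that can only decrease under passing to a subcopy. If that fails, I would drop the $\mathscr N$-ordering altogether and use $\delta$-based or positional invariants. For example, take $f_0(A)$ and $f_1(A)$ to be the two classes whose associated canonised splitting nodes are, say, of minimal height, ties broken lexicographically. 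Since passing to a subcopy of a canonised class only moves its splitting nodes upward, this would require verifying monotonicity in the correct direction.

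\emph{Step 4: density.} Given any $A$, first apply $\mathscr F$. Then thin every other $\omega$/$\omega^*$ class by deleting finitely many initial points (or taking subsequences), which preserves the order type, so that all other classes have strictly larger invariants than two designated ones. The two designated classes must be chosen without choice. I would take the two classes with the globally least invariant values, reducing ties by thinning the others. With $\alpha$ countable, $\mathscr N$ is $\omega$-valued, so minima exist. Thinning other classes to raise their $\mathscr N$-values beyond a fixed bound is possible uniformly: remove finitely many points until the next split exceeds the bound. For classes of type $\zeta$, which are not involved, nothing is needed.

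The main obstacle is making the selection invariant under replacement, i.e.\ condition (d) of Definition \ref{mutuallycoherent:defn}. When $f_0(A)$ is replaced by an arbitrary subcopy $B_0$, its invariant can increase past those of other classes. I would handle this by first arranging in $\D$ that the other classes' splits all lie at levels in $\mathscr N$ exceeding every level occurring within $f_0(A)\cup f_1(A)$. I would also use the first split of each class in a way that can only decrease, or stay bounded, under subcopies; for instance, the $\delta$-height of $\Delta$ of the whole class, which under canonisation does not increase when passing to cofinal subsets. If this stability holds, the lemma follows immediately from Lemma \ref{twomutuallycoherent:lemma}.
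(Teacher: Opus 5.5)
There is a genuine gap, and it is the step you yourself flag as the main obstacle. Condition (d) of Definition~\ref{mutuallycoherent:defn} fails for your selectors, and none of your proposed repairs works.

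Suppose $f_0(A)=\{c_n:n\in\omega\}$ is a whole canonised class. Its split levels $\delta(c_n,c_{n+1})$ are pairwise distinct, so the values $\mathscr N(c_n,c_{n+1})$ are unbounded in $\omega$. Replace $f_0(A)$ by a tail $B_0=\{c_n:n\ge k\}$ with $k$ large. This pushes its base value above that of $f_1(A)$, so $B_0$ is no longer the class of least base value in $A'$. Hence either $A'\notin\D$ or $f_0(A')\neq B_0$.

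The same failure occurs for any invariant computed from the selected set itself. For $B\subseteq C$ we have $\Delta(B)\sqsupseteq\Delta(C)$, and for a canonised class the height strictly increases as soon as $B$ omits $c_0$. So your claim that the height of $\Delta$ of the class ``does not increase when passing to cofinal subsets'' has the direction backwards.

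Your other repairs also fail:
\begin{itemize}
\item Arranging that the other classes split only at $\mathscr N$-levels above every level occurring in $f_0(A)\cup f_1(A)$ is impossible, because those levels form an infinite set of natural numbers.
\item Invariants that genuinely are stable under subcopies, such as the supremum of the split levels, cannot be separated by thinning. For $\alpha=\omega$, every canonised class has supremum $\omega$.
\end{itemize}

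The missing idea is to keep an \emph{anchor} point of each selected class outside the selector's output. The paper lets $\D$ consist of those $A$ such that:
\begin{itemize}
\item all classes in $\ccomega(A)\cup\ccomegastar(A)$ are canonised, and
\item there is $\delta'<\alpha$ such that exactly two of these classes, $C_0<C_1$, satisfy $\delta(C)\coloneqq\delta(c_0,c_1)\le\delta'$.
\end{itemize}
It then sets $f_i(A)\coloneqq C_i\setminus\{c^{(i)}_0\}$.

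Because $C_i$ is canonised, $\delta(c^{(i)}_0,b)=\delta(C_i)$ for every $b$ in any replacement $B_i\in[f_i(A)]^{\sigma_i}$. So the new class $\{c^{(i)}_0\}\cup B_i$ is still canonised and has the same value $\delta(C_i)\le\delta'$, while every other class is untouched. Hence $A'\in\D$, witnessed by the same $\delta'$, and $f_i(A')=B_i$.

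The same trick works verbatim with $\mathscr N(c_0,c_1)$ in place of $\delta(c_0,c_1)$. In that form your Step~4 thinning gives density immediately, since a canonised class has only finitely many splits of $\mathscr N$-value below any bound. Lemma~\ref{twomutuallycoherent:lemma} then finishes the proof.
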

\begin{proof}
    Let $\tau$ be as stated. For $C \in [\twoalphalex]^\omega$ or $C \in [\twoalphalex]^{\omega^*}$, write $C = \{c_n : n \in \omega\}$, where this enumeration is $\lex$-increasing if $C$ is ordered as $\omega$ and $\lex$-decreasing if $C$ is ordered as $\omega^*$. Now set $\delta(C) \coloneqq \delta(c_0,c_1)$.

    \begin{claim} The set of $A \in [\twoalphalex]^\tau$ satisfying the following conditions is dense in $[\twoalphalex]^\tau$:
    \begin{enumerate}[(a)]
        \item For every $C \in \ccomega(A) \cup \ccomegastar(A)$, $\delta(C) = \min\{\delta(x,y) : x,y \in C\}$;
        \item There exists some $\delta' < \alpha$ such that there are precisely two $C \in \ccomega(A) \cup \ccomegastar(A)$ with $\delta(C) \le \delta'$.
    \end{enumerate}
    \end{claim}
    \begin{claimproof} Let $A \in [\twoalphalex]^\tau$ be arbitrary; we show that we can reduce it to an $A' \in [A]^\tau$ satisfying both (a) and (b). To obtain (a), replace $A$ by $\mathscr F(A)$, where $\mathscr F$ is the canonisation map defined in Lemma \ref{canonisation:lemma}. Now, to obtain (b), given any $C_0,C_1 \in \ccomega(A) \cup \ccomegastar(A)$, set $\delta' = \max\{\delta(C_0),\delta(C_1)\}$, and for each other $C \in \ccomega(A) \cup \ccomegastar(A)$, remove $\{c_0,c_1,\dots,c_{m-1}\}$ for $m = m_C\in \omega$ minimal with the property that $\delta(c_N,c_{N+1}) > \delta'$ for all $N' \ge m$. Note that the resulting set still has all infinite condensation classes canonised, so (a) still holds.
    \end{claimproof}

    Write $\mathscr D$ for the set of $A \in [\twoalphalex]^\tau$ satisfying (a) and (b). Now, for $A \in \mathscr D$, let $C_0, C_1 \in \ccomega(A) \cup \ccomegastar(A)$ be the two condensation classes guaranteed by (b), named such that $C_0 < C_1$. For $i \in \{0,1\}$, write $C_i = \{c^{(i)}_n : n \in \omega\}$, as above. Then the functions $f_0 : \mathscr D \to [\twoalphalex]^{\omega^{(*)}}$, $f_1: \mathscr D \to [\twoalphalex]^{\omega^{(*)}}$ given by $f_i : A \mapsto C_i \setminus \{c^{(i)}_0\}$ are mutually coherent, and so it follows from Lemma \ref{twomutuallycoherent:lemma} that $\twoalphalex \not \rightarrow (\tau)^\tau$.
\end{proof}
\begin{lemma}\label{zetacc:lemma}
    Let $\tau$ be an order type with at least one condensation class ordered as $\zeta$, and let $\alpha$ be countable. Then $\twoalphalex \not \rightarrow (\tau)^\tau$.
\end{lemma}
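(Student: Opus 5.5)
Fix a condensation class $Z$ of $\tau$ ordered as $\zeta$. The plan is to produce two mutually coherent selectors $f_0 : [\twoalphalex]^\tau \rightharpoonup [\twoalphalex]^{\omega^*}$ and $f_1 : [\twoalphalex]^\tau \rightharpoonup [\twoalphalex]^{\omega}$ on a dense domain, and then apply Lemma \ref{twomutuallycoherent:lemma}. They will select the two halves of a canonised $\zeta$-class. If $\tau$ has at least two condensation classes of type $\omega$ or $\omega^*$, Lemma \ref{twoomegageneral:lemma} already finishes the proof. The real content is therefore the case where a single $\zeta$-class has to supply both selectors.

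\textbf{Dense domain.} Let $\D$ be the set of $A \in [\twoalphalex]^\tau$ such that:
\begin{enumerate}[(a)]
\item every infinite condensation class of $A$ is canonised;
\item among the $\zeta$-classes of $A$ there is a unique $K$ minimising $\mu(K) \coloneqq \min\{\delta(x,y) : x,y\in K\}$;
\item no $\omega$- or $\omega^*$-class $C$ of $A$ has $\delta(C) \le \mu(K)$.
\end{enumerate}
To see that $\D$ is dense, start from any $A$ and pass to $\mathscr F(A)$, which gives (a).

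For (b) and (c), fix one $\zeta$-class $K$, canonised as $K_0 \cup K_1$ with the split value $\mu(K)$ attained across the cut. In every other $\zeta$-class $K'$, remove finitely many points near its own split, keeping a $\zeta$-shape, so that the new minimum exceeds $\mu(K)$. This is possible because in a canonised $\omega$-half the values $\delta(c_n,c_{n+1})$ take each value at most finitely often in the relevant sense: they are distinct ordinals, increasing in the canonised structure since the splits are nested. Hence all but finitely many exceed $\mu(K)$. Likewise, trim the initial points of each $\omega$- or $\omega^*$-class $C$ as in the proof of Lemma \ref{twoomegageneral:lemma}, and re-canonise the trimmed $\zeta$-classes.

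Removing finitely many points from an infinite class does not change the order type of $A$, since each trimmed class keeps its type. I expect this step to be the main obstacle. A $\zeta$-class trimmed on both sides must still have its minimum split attained across its two halves, so the trimming should remove points adjacent to the old cut on both sides. Because the canonised halves have strictly increasing split heights moving away from the cut, dropping the first $m$ points of each half keeps the class canonised, with new minimum $\min(\delta(k^0_m),\delta(k^1_m))$ or the cross-split. If the cross-split was the old minimum it remains a lower bound, so the class must be handled by comparing with $K$ via a strict inequality. A tie between two classes both attaining the cross-split can be broken by choosing $K$ to be one class and shrinking every other class enough. This last point needs the most care.

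\textbf{Selectors.} For $A \in \D$, write $K = K_0 \cup K_1$ with $K_0 = \{k^0_n\}$ decreasing and $K_1 = \{k^1_n\}$ increasing. Set $f_0(A) = K_0 \setminus \{k^0_0\}$ and $f_1(A) = K_1\setminus\{k^1_0\}$.

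The witnesses $k^0_0, k^1_0$ stay in $A'$, so the cross-split $\mu(K)$ stays attained between them. If $f_0(A)$ and $f_1(A)$ are replaced by subcopies $B_0, B_1$, the resulting class $\{k^0_0\}\cup B_0\cup B_1\cup\{k^1_0\}$ is still a canonised $\zeta$-class. Its minimum is still $\mu(K)$, attained uniquely relative to the other classes, and (c) is untouched. So $A' \in \D$ and $f_i(A') = B_i$, which verifies conditions (a)–(d) of Definition \ref{mutuallycoherent:defn}.

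Lemma \ref{twomutuallycoherent:lemma} then gives $\twoalphalex \not\rightarrow(\tau)^\tau$. The coloring there compares $\mathscr N$ of the first splits in $B_0$ and $B_1$, and these can be made arbitrarily large independently by dropping initial points.
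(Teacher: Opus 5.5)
Your density claim fails at condition (b), which is the tie-breaking step you flagged. The proposed fix, ``shrinking every other class enough'', cannot work. Let $K'$ be any $\zeta$-class and let $a<b$ be the consecutive pair attaining $\mu(K')$. Every subcopy of $\zeta$ inside $K'$ is unbounded in $K'$ in both directions, so it contains some $x\le a$ and $y\ge b$, and then $\delta(x,y)\le\delta(a,b)=\mu(K')$. So $\mu$ is invariant under every shrinking of a $\zeta$-class to a sub-$\zeta$. Removing finitely many points near the split so that the new minimum exceeds $\mu(K)$ is therefore impossible, and ties in $\mu$ can never be broken this way.

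Concretely, let $\tau=\zeta+\zeta$, $\alpha=\omega$, and $A=K\cup K'$ with $K,K'$ canonised copies of $\zeta$ satisfying $\Delta(K)=\langle 0\rangle$ and $\Delta(K')=\langle 1\rangle$. Every $A'\in[A]^{\zeta+\zeta}$ meets $K$ and $K'$ in subcopies of $\zeta$, and these are the two $\zeta$-classes of $A'$. Both still have $\mu=1$, so $[A]^\tau\cap\D=\emptyset$. Hence your $\D$ is not dense, and Lemma \ref{twomutuallycoherent:lemma} cannot be invoked.

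Condition (c) has a separate problem when $\alpha>\omega$. A strictly increasing $\omega$-sequence of splits can stay below $\mu(K)$, for example splits $1,2,3,\dots$ against $\mu(K)=\omega$, so ``all but finitely many exceed $\mu(K)$'' fails. However, (c) plays no role in checking coherence and can simply be dropped.

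The missing idea is a secondary parameter that trimming \emph{can} move. The paper uses $\delta_0(C)$, the split between the two largest points of the left half of a canonised $\zeta$-class $C$. Deleting the top point of the left half leaves $\mu(C)$ unchanged but strictly increases $\delta_0(C)$. The paper then shows it is dense that one of two things happens:
\begin{enumerate}[(i)]
\item some $\zeta$-class uniquely minimises $\mu$; it is picked out and coloured as in Claim \ref{zetaalphacountable:claim};
\item for some $\delta'$, exactly two $\mu$-minimising classes have $\delta_0\le\delta'$; the right halves of these two \emph{different} classes are then mutually coherent $\omega$-selectors.
\end{enumerate}

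Your one-class scheme can also be rescued along these lines. Take the least value of $(\mu,\delta_0)$, ordered lexicographically. Delete the top left point from all but one of the classes attaining it, which gives a unique minimiser $K$. But then $f_0$ must keep the two largest points of $K_0$ fixed, for example $f_0(A)=K_0\setminus\{k^0_0,k^0_1\}$. With your $f_0(A)=K_0\setminus\{k^0_0\}$, a subcopy $B_0$ omitting $k^0_1$ changes $\delta_0(K)$, and coherence fails.
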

\begin{proof}
    First we deal with the simplest case, i.e.\ $\tau = \zeta$; the rest of the proof will be devoted to showing that we can always either reduce to this case or appeal directly to Lemma \ref{twomutuallycoherent:lemma}.
    \begin{claim}\label{zetaalphacountable:claim}
        $\twoalphalex \not \rightarrow (\zeta)^\zeta$ for $\alpha$ countable.
    \end{claim}
    \begin{claimproof}
        This proof essentially goes via Lemma \ref{twomutuallycoherent:lemma}, but the colouring we obtain is sufficiently easy to describe that it seems better to include the full details of it. Let $A \in [\twoalphalex]^\zeta$. Then $A$ has an associated successor function, $S_A: A \to A$. Let $x_A \in A$ be that $x \in A$ that minimises $\delta(x,S_A(x))$. Now write $\mathscr N_0(A) \coloneqq \mathscr N(S_A^{-1}(x_A),x_A)$ and $\mathscr N_1(A) \coloneqq \mathscr N(S_A(x_A),S_A^2(x_A))$, and define a colouring $F: [\twoalphalex]^\zeta \to 2$ by, for $A \in [\twoalphalex]^\zeta$,
		\[F(A) = \begin{cases*}
		0 & if $\mathscr N_0(A) \ge \mathscr N_1(A)$\\
		1 & if $\mathscr N_0(A) < \mathscr N_1(A)$.
		\end{cases*}\]
		Observe that for any $y > x_A$ in $A$, $A' \coloneqq A\setminus (x_A,y)$ has $S_{A'}(x_A) = y$ and $\delta(x_A,y) = \delta(x_A,S_A(x_A))$ so this is still minimal and $x_{A'} = x_A$; in this way we can reduce to some $A' \in [A]^\zeta$ with $\mathscr N_0(A') = \mathscr N_0(A)$ and $\mathscr N_1(A')$ equal to $\mathscr N(y,S_A(y))$ for an arbitrary $y > x_A$ in $A$, and so in particular can be made to be an arbitrarily large element of $\omega$. Similarly, taking $A' = A \setminus (y,S_A(x_A))$ for some $y < x_A$ in $A$, we can make $\mathscr N_0$ arbitrarily large while keeping the value of $\mathscr N_1$ the same. It follows that any $A \in [\twoalphalex]^\zeta$ has some subset $A' \in [A]^\zeta$ with $F(A') \neq F(A)$.\footnote{We remark that the colouring $F$ defined in this proof is exactly the colouring we obtain if we appeal to Lemma \ref{twomutuallycoherent:lemma} with the mutually coherent selectors $f_0 : [\twoalphalex]^\zeta \to [\twoalphalex]^{\omega^*}$, $f_1 : [\twoalphalex]^\zeta \to [\twoalphalex]^\omega$ given by $f_0 : A \mapsto \{x \in A: x \le x_A\}$ and $f_1 : A\mapsto \{x \in A: x \ge S_A(x_A)\}.$}
    \end{claimproof}
    The rough idea behind the rest of the proof is as follows: if we can easily identify a single copy of $\zeta$, we can pick that one out and colour it; if not, we can reduce to a situation where we can pick out two copies of $\zeta$ based on their left halves, and play the right halves against each other, in the sense of Lemma \ref{twomutuallycoherent:lemma}.
    
    If $\tau$ has only one condensation class ordered as $\zeta$, then $f: [\twoalphalex]^\tau \to [\twoalphalex]^\zeta$ which sends $A$ to the unique element of $\ccint(A)$ is a coherent selector, and it follows from Claim \ref{zetaalphacountable:claim} and Observation \ref{coherentexponentlift:obs} that $\twoalphalex\not \rightarrow (\tau)^\tau$. So assume $\tau$ has at least two condensation classes ordered as $\zeta$. For $C \in \ccint(A)$, write $\delta(C) = \min\{\delta(x,y):x,y \in C\}$. This in particular equals $\delta(x_C,S_C(x_C))$. Write $\delta_0(C) \coloneqq \delta(S_C^{-1}(x_C),x_C)$, and $\delta_1(C) \coloneqq \delta(S_C(x_C),S_C^2(x_C))$.
    \begin{claim}
        Every $A \in [\twoalphalex]^\tau$ has a subset $B \in [A]^\tau$ with one of the following properties:
        \begin{enumerate}[(a)]
            \item There is a unique $C \in \ccint(B)$ with $\delta(C)$ minimal, \textit{or}
            \item There is some $\delta' < \alpha$ such that among those $C \in \ccint(B)$ with $\delta(C)$ minimal, precisely two of them have $\delta_0(C) \le \delta'$.
        \end{enumerate}
    \end{claim}
    \begin{claimproof}
        Suppose $A \in [\twoalphalex]^\tau$ does not satisfy (a). We will show that there is $B \in [A]^\tau$ satisfying (b). Consider the values of $\delta_0(C)$ attained by those $C \in \ccint(A)$ with $\delta(C)$ minimal. If there is some minimal value attained by multiple such $C$, set $\delta'$ equal to this value; otherwise, let $\delta'$ be the second-smallest value of $\delta_0(C)$ for $C \in \ccint(A)$ with $\delta(C)$ minimal. Now pick out some $C_0, C_1 \in \ccint(A)$ with $\delta(C_0) = \delta(C_1)$ minimal and $\delta_0(C_i) \le \delta'$ for $i \in \{0,1\}$. We shrink $A$ to some $B \in [A]^\tau$ such that $B$ satisfies condition (b), witnessed by $C_0, C_1 \in \ccint(B)$, in the following way: for every $C \in \ccint(A) \setminus \{C_0,C_1\}$, replace $C$ by $C' \in [C]^\zeta$ given by
        \[C' \coloneqq C \setminus (S^{-n}_C(x_C),S_C(x_C))\]
        where $n \in \omega$ is minimal such that $\delta(S_C^{-n-1}(x_C),S_C^{-n}(x_C)) > \delta'$. Then these $C'$ are condensation classes of $B$, and all have $\delta(C') = \delta(C)$ but $\delta_0(C') > \delta'$; further, $C_0$ and $C_1$ are still condensation classes of $B$, but now are the only two elements of $\ccint(B)$ with $\delta_0(C) \le \delta'$.
    \end{claimproof}
    It follows that the subset of elements of $[\twoalphalex]^\tau$ satisfying either (a) or (b) is dense, so it suffices to define a colouring on these $A \in [\twoalphalex]^\tau$. If $A \in [\twoalphalex]^\tau$ satisfies (a), i.e.\ there is a unique $C \in \ccint(A)$ minimising $\delta(C)$, then we can pick out this $C$ and colour it as in the proof of Claim \ref{zetaalphacountable:claim}. Otherwise, $A$ satisfies (b); let $C_0$, $C_1 \in \ccint(A)$ be the two condensation classes guaranteed by the statement of (b), named so that $C_0 < C_1$; then $A \mapsto \{x \in C_0: x > x_{C_0}\}$, $A \mapsto \{x \in C_1 : x > x_{C_1}\}$ are mutually coherent selectors defined on those $A \in [\twoalphalex]^\tau$ satisfying (b) and we are done by Lemma \ref{twomutuallycoherent:lemma}.\footnote{Formally, we can phrase this proof as a definition of a single pair of coherent selectors whose domain is the set of those $A \in [\twoalphalex]^\tau$ satisfying (a) or (b), with the functions themselves defined by cases according to whether $A$ satisfies (a) or (b).}
\end{proof}
Our final lemma before we prove Theorem \ref{similartor:thm} is more general than the setting of $\alpha$ countable.
\begin{lemma}\label{kappa:lemma}
    Let $\kappa$ be a regular cardinal and $\alpha$ an ordinal with $\left|\alpha\right| = \kappa$. Then
    \[\twoalphalex \rightarrow (\kappa)^\kappa \iff \kappa \rightarrow (\kappa)^\kappa.\]
\end{lemma}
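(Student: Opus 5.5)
The plan is to prove the two directions separately. The backward direction is just monotonicity. The forward direction transfers a bad colouring of $[\kappa]^\kappa$ to a dense set of copies of $\kappa$ in $\twoalphalex$, using the heights at which consecutive points split, and then applies Observation \ref{denseisenough:obs}.

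\emph{($\Leftarrow$).} Since $\left|\alpha\right| = \kappa$ we have $\kappa \le \alpha$. For $\xi < \kappa$, let $x_\xi \in \twoalpha$ be the sequence that is $1$ on $[0,\xi)$ and $0$ elsewhere. If $\xi < \eta$, then $x_\xi$ and $x_\eta$ first differ at $\xi$, where $x_\xi(\xi) = 0 < 1 = x_\eta(\xi)$. So $\xi \mapsto x_\xi$ is an order-embedding of $\kappa$ into $\twoalphalex$, and $\kappa \le \otp\twoalphalex$. By the monotonicity of partition relations noted in \S\ref{terminologynotation:subsection}, $\kappa \rightarrow (\kappa)^\kappa$ implies $\twoalphalex \rightarrow (\kappa)^\kappa$.

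\emph{($\Rightarrow$), the dense set.} Fix $c : [\kappa]^\kappa \to 2$ with no homogeneous set, and fix the bijection $\mathscr B : \alpha \to \kappa$. For $A \in [\twoalphalex]^\kappa$, enumerate $A$ increasingly as $\{a_\xi : \xi < \kappa\}$. Call $A$ \emph{canonised} if $\delta(a_\xi,a_\eta) = \delta(a_\xi,a_{\xi+1})$ for all $\xi < \eta$; this is the $\kappa$-analogue of Definition \ref{canonised:defn}.

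\begin{enumerate}[(1)]
\item \emph{The canonised sets are dense.} Fix $\xi$. Since $\delta(x,z) = \min\{\delta(x,y),\delta(y,z)\}$ for $x<y<z$, the ordinals $\delta(a_\xi,a_\eta)$ are non-increasing in $\eta > \xi$. Hence they stabilise from some least $\eta_\xi < \kappa$ on. Define $\xi_\nu$ by recursion: let $\xi_\nu$ be the least ordinal exceeding all $\xi_\mu$ and all $\eta_{\xi_\mu}$ for $\mu < \nu$. Regularity of $\kappa$ guarantees $\xi_\nu < \kappa$. Then $\{a_{\xi_\nu} : \nu < \kappa\}$ is canonised and of type $\kappa$. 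Everything here is explicitly definable from $A$, so no choice is used.
\item \emph{Canonised sets yield copies of $\kappa$.} Let $A$ be canonised and put $\delta_\xi = \delta(a_\xi,a_{\xi+1})$. For $\xi < \eta$, both $a_\eta$ and $a_{\eta+1}$ extend $(a_\xi \restriction \delta_\xi)^\frown\langle 1\rangle$. Hence $\delta_\eta > \delta_\xi$, so $\xi \mapsto \delta_\xi$ is injective. Therefore $S(A) \coloneqq \{\mathscr B(\delta_\xi) : \xi < \kappa\}$ is a subset of $\kappa$ of cardinality $\kappa$, and so has order type $\kappa$.
\end{enumerate}

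\emph{($\Rightarrow$), the colouring.} On the canonised sets define $F(A) = c(S(A))$. The key point is that $S$ is surjective onto subcopies. Take any canonised $A$ and any $T \in [S(A)]^\kappa$, and let $A' = \{a_\xi : \mathscr B(\delta_\xi) \in T\}$. Any subset of a canonised set is canonised. Moreover, if $a_\xi < a_\zeta$ are consecutive in $A'$, then $\delta(a_\xi,a_\zeta) = \delta_\xi$. Hence $A' \in [A]^\kappa$ is canonised with $S(A') = T$. Since $c$ has no homogeneous set, choose $T \in [S(A)]^\kappa$ with $c(T) \neq c(S(A))$; then the corresponding $A'$ satisfies $F(A') \neq F(A)$. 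Observation \ref{denseisenough:obs} now gives $\twoalphalex \centernot\rightarrow (\kappa)^\kappa$.

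The main obstacle is step (1): checking that the canonising recursion has length $\kappa$ without stopping early. This is exactly where regularity of $\kappa$ is needed, since at each stage the supremum of fewer than $\kappa$ many ordinals below $\kappa$ must stay below $\kappa$.
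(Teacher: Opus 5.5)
Your proof is correct and follows essentially the same route as the paper. You restrict to a dense family of canonised copies of $\kappa$, using regularity for the recursion, read off the $\mathscr B$-images of the consecutive splitting heights, and pull back a bad colouring of $[\kappa]^\kappa$ via Observation~\ref{denseisenough:obs}. There are two differences, both minor. First, your recursion jumps past the stabilisation points $\eta_\xi$ rather than using the paper's shrinking final-segment reservoirs. Second, you work with the range $S(A)$ as a set, using that any $\kappa$-sized subset of $\kappa$ has type $\kappa$, which lets you skip the paper's extra thinning step that makes $N(C)$ increasing.
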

\begin{proof}
    The reverse implication is trivial, noting simply that $\kappa$ embeds in any such $\twoalphalex$ as a linear order, so we focus on the forwards implication.
    %
    We extend our notation from above, so $\mathscr B : \alpha \to \kappa$ is a fixed bijection and $\mathscr N(x,y) = \mathscr B(\delta(x,y))$, and for $A = \langle a_\beta : \beta < \kappa \rangle \in [\twoalpha]^\kappa$, write
    \[\begin{split}
        \delta(A) &\coloneqq \langle \delta(a_\beta,a_{\beta+1}) : \beta < \kappa\rangle,\text{ and}\\
        N(A) &\coloneqq \langle \mathscr N(a_\beta,a_{\beta+1})) : \beta < \kappa\rangle.
    \end{split}\]
    \begin{claim}\label{kappanincreasing:claim} There is an open dense $\D \subseteq [\twoalpha]^\kappa$ with the property that for all $C \in \D$, $\delta(C)$ and $N(C)$ are both increasing.
    \end{claim}
    \begin{claimproof} Let $\D$ be the set of all $C = \langle c_\beta: \beta < \kappa\rangle \in [\twoalpha]^\kappa$ with the following three properties: 
    \begin{enumerate}[(a)]
        \item $\delta(C)$ is increasing; 
        \item $N(C)$ is increasing;
        \item for any $\beta < \gamma < \kappa$, $\delta(c_\beta,c_\gamma) = \delta(c_\beta,c_{\beta+1}).$
    \end{enumerate}
    We will show that this $\D$ is open dense. First observe that the conjunction of these three properties is open, so we need only show that $\D$ is dense.
    
    Given $A \in [\twoalpha]^\kappa$, we first reduce to a subsequence $B = \langle b_\beta: \beta < \kappa\rangle \in [A]^\kappa$ with the property that $\delta(B)$ is increasing and that for all $\beta < \gamma < \kappa$, the value of $\delta(b_\beta, b_\gamma)$ is determined only by $\beta$, i.e.\ properties (a) and (c) above. We do this iteratively by simultaneously building a sequence $\langle b_\beta : \beta < \kappa\rangle$ and a sequence of ``reservoir sets" $\langle A^{(\beta)} : \beta < \kappa\rangle$ such that each $A^{(\beta)}$ is a final segment of $A$, each $b_\beta \in A^{(\beta)}$, and for $\beta_0 < \beta_1$, $A^{(\beta_0)} \supseteq A^{(\beta_1)}$.

    Set $A^{(0)} = A$. Given $A^{(\beta)}$, we find some $x_\beta, y_\beta \in A^{(\beta)}$ with $x_\beta < y_\beta$ and with the property that $\delta(x_\beta,y_\beta)$ is minimal amongst all pairs in $A^{(\beta)}$.\footnote{Since $A^{(\beta)}$ is well-ordered we can do this without Choice.} By minimality and the fact that $\delta(x,z) = \min(\delta(x,y),\delta(x,z))$ whenever $x < y < z$, we have that $\delta(x_\beta,z) = \delta(x_\beta,y_\beta)$ for every $z \in A^{(\beta)}$ with $z \ge y_\beta$. Now set $b_\beta \coloneqq x_\beta$ and $A^{(\beta+1)} \coloneqq \{z \in A^{(\beta)}: y_\beta \le z\}$. At limit stages $\gamma$, set $A^{(\gamma)} \coloneqq \bigcap_{\beta < \gamma} A^{(\beta)}$; since $\kappa$ is regular, this is a non-empty final segment of $A$. By construction, our two requirements on $B$ hold. We now reduce $B$ to a further subsequence $C$ with $N(C)$ increasing.

    For each $\beta < \kappa$, we have an associated quantity $\delta_\beta = \delta(b_\beta,b_\gamma)$ for any $\gamma > \beta$. Then $\beta \mapsto \mathscr B(\delta_\beta)$ is an injection from $\kappa$ to itself, so there is a $\kappa$-sized subset of $\kappa$ on which it is increasing; set $C$ to be the corresponding elements of $B$. Then $C$ has the property that $\delta(C)$ and $N(C)$ are increasing, and for any $\beta < \gamma < \kappa$, $\delta(c_\beta,c_\gamma) = \delta(c_\beta,c_{\beta+1})$; any $C' \in [C]^\kappa$ also has all of these properties. Since $A$ was arbitrary, we are done.
    \end{claimproof}
    \begin{claim}\label{kappaallsubsetsifnincreasing:claim}
        Let $C \in \D$ as above, and let $X \in [N(C)]^\kappa$. Then there is some $C_X \in [C]^\kappa$ with $N(C_X) = X$.
    \end{claim}
    \begin{claimproof}
        Fix $C$, $X$ as described and define $C_X \coloneqq \langle c_\beta : \mathscr B(\delta(c_\beta,c_{\beta+1})) \in X\rangle$. Then $C_X \in [C]^\kappa$, and for any $c_\beta \in C_X$, writing $c_{\gamma}$ for its successor in $C_X$, we have that $\delta(c_\beta,c_\gamma) = \delta(c_\beta,c_{\beta+1})$ by condition (c) on $\D$, and in particular $\mathscr B(\delta(c_\beta,c_{\beta+1})) \in X$. Since for every $\varepsilon \in X$ there was some $\beta < \kappa$ with $\delta(c_\beta,c_{\beta+1}) = \varepsilon$, we have that $N(C_X) = X$, as required.
    \end{claimproof}
    Now suppose $\kappa \not \rightarrow (\kappa)^\kappa$, and let $F: [\kappa]^\kappa \to 2$ witness this, so $F$ has no homogeneous set. We use this $F$ to induce a colouring $G_F:\D \to 2$ with no homogeneous set like so: for $C \in \D$,
    \[G_F(C) \coloneqq F(N(C)).\]
    But since $C \in \D$, given any $X \in [N(C)]^\kappa$, by Claim \ref{kappaallsubsetsifnincreasing:claim} there is some $C_X \in [C]^\kappa$ with $N(C_X) = X$; thus, if $F$ has no homogeneous set, $G_F$ also has no homogeneous set.
\end{proof}
\begin{proof}[Proof of Theorem \ref{similartor:thm}]
    Let $\tau$ be a countably infinite order type, and let $\alpha$ be a countable ordinal. We proceed by a series of cases. If $\tau$ has at least two condensation classes each ordered as $\omega$ or $\omega^*$, then $\twoalphalex \centernot \rightarrow (\tau)^\tau$ by Lemma \ref{twoomegageneral:lemma}; if $\tau$ has a condensation class ordered as $\zeta$, then  by Lemma $\ref{zetacc:lemma}$; otherwise, $\tau$ has at most one infinite condensation class, ordered as one of $\omega$ or $\omega^*$ if it exists. In any such $\tau$, it is the case that if there are two distinct finite condensation classes, there must be another finite condensation class between them; in particular, if $\tau$ has more than one finite condensation class, it is non-scattered, and so $\tau + \tau \le \tau$; it follows from Theorem \ref{tauplustau:thm} that $\twoalphalex \centernot \rightarrow (\tau)^\tau$ in this case.
    
    The remaining case is that $\tau$ has at most one infinite condensation class, ordered as $\omega$ or $\omega^*$, and at most one finite condensation class, i.e.\ $\tau$ is of the form $\omega + k$ or $k + \omega^*$ for some $k \in \omega$. By Lemma \ref{kappa:lemma},
    \[\twoalphalex \rightarrow (\omega)^\omega \iff \omega \rightarrow (\omega)^\omega,\]
    and by symmetry $\twoalphalex \rightarrow (\omega^*)^{\omega^*} \iff \twoalphalex \rightarrow (\omega)^\omega$.
    
    Now suppose $\tau = \omega + k$ for some $k \in \omega$. If $\twoalphalex \rightarrow (\omega)^\omega$, observe that any $F : [\twoalphalex]^\tau \to 2$ induces a colouring $F' : [\twoalphalex]^\omega \to 2$ by, for $\bar{x} \in [\twoalphalex]^\omega$, writing $\bar{x}'$ for the $\omega$-sequence $\langle \langle 0\rangle ^\frown x_n : n \in \omega\rangle$ and fixing any $k$-tuple $\bar{y} \in [\twoalphalex]^k$ all of whose elements begin with a 1, and setting
    \[F'(\bar{x}) \coloneqq F(\bar{x}'^\frown \bar{y}),\]
    and if $\bar{h}$ is homogeneous for $F'$, then $\bar{h}'^\frown \bar{y}$ is homogeneous for $F$.

    Conversely, if $\twoalphalex \rightarrow (\tau)^\tau$, a colouring $F : [\twoalphalex]^\omega \to 2$ induces a colouring $F'' : [\twoalphalex]^\tau$ by, for $\bar{x} \in [\twoalphalex]^\tau$, simply setting
    \[F''(\bar{x}) \coloneqq F(\bar{x}\restriction \omega).\]
    Then if $\bar{h}$ is homogeneous for $F''$, $\bar{h}\restriction \omega$ is homogeneous for $F$. It follows that 
    \[\twoalphalex\rightarrow (\tau)^\tau \iff \twoalphalex \rightarrow (\omega)^\omega \iff \omega \rightarrow (\omega)^\omega.\]
    By symmetry, if $\tau$ is of the form $k + \omega^*$ for some $k \in \omega$,
    \[\twoalphalex \rightarrow (\tau)^\tau \iff \twoalphalex \rightarrow (\omega^*)^{\omega^*} \iff \omega \rightarrow (\omega)^\omega.\]
\end{proof}
\section{Sums of ordinals and reverse ordinals}\label{uncountable:section}
In this section we characterise the consistency of the relations
\[\twoalphalex \rightarrow (\tau)^\tau\]
for $\tau$ a finite sum of ordinals and reverse ordinals (equivalently, $\tau$ well-orderable and $\omega\omega^*,\omega^*\omega \centernot \le \tau$, by Lemma \ref{finitesumchar:lemma}).

We make use in this section of a notion which extends our usual partition relations. For $\bar{L} = \langle\Lzero, \Lone,\dots, \Ln\rangle$ a sequence of linear orders and $\tau_0, \tau_1, \dots, \tau_n$ order types, write $[\bar{L}]^{\tau_0, \tau_1, \dots, \tau_n}$ as a piece of abbreviated notation for the set $[\Lzero]^{\tau_0} \times [\Lone]^{\tau_1} \times \dots \times [\Ln]^{\tau_n}$. The \emph{polarised partition relation}
\[\begin{pmatrix}
    \Lzero \\ \Lone \\ \vdots \\ \Ln
    \end{pmatrix} \rightarrow \begin{pmatrix}
    \sigma_0 \\ \sigma_1 \\ \vdots \\\sigma_n\end{pmatrix}^{\tau_0,\tau_1,\dots,\tau_n}\]
is the statement that for any colouring $F: [\bar{L}]^{\tau_0, \tau_1, \dots, \tau_n} \to 2$, there is $\bar{H} = \langle H_0, H_1, \dots, H_n\rangle \in [\bar{L}]^{\sigma_0,\sigma_1,\dots,\sigma_n}$ which is homogeneous for $F$, in the sense that $\left|F \im [\bar{H}]^{\tau_0, \tau_1, \dots, \tau_n}\right| = 1$.
\begin{prop}\label{finitesumAD:prop}
    Let $\kappa$ be an uncountable initial ordinal and let $\tau$ be a finite sum of ordinals and reverse ordinals, all $< \kappa$. Then there is an ordinal $\xi = \xi(\tau) < \kappa$ such that
    \[\kappa \rightarrow (\xi)^\xi \implies \twokappalex \rightarrow (\tau)^\tau.\]
\end{prop}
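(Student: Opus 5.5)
We transfer a colouring of copies of $\tau$ in $\twokappalex$ to a colouring of subsets of $\kappa$ of some order type $\xi$, following the template of Lemma \ref{kappa:lemma}. Write $\tau = \tau_0 + \tau_1 + \dots + \tau_n$ with each $\tau_i$ an ordinal $<\kappa$ or the reverse of one. The plan is to fix a rigid ``canonical'' family of copies of $\tau$ in $\twokappa$, each determined by a set of splitting heights. Given a well-ordered set $X \subseteq \kappa$ of order type $\xi$, we read off a copy $A_X$ of $\tau$ whose splitting levels are exactly the elements of $X$, allocated to the pieces in a fixed pattern. The pattern should be such that every $A' \in [A_X]^\tau$ can be shrunk further to some $A_{X'}$ with $X' \in [X]^\xi$. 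Then, given $F:[\twokappa]^\tau\to 2$, we define $G(X) = F(A_X)$. A homogeneous $H \in [\kappa]^\xi$ for $G$ should yield a homogeneous set of the form $A_H$.

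\textbf{Step 1: the coding of a single piece.} For an ordinal piece $\tau_i = \gamma$, code a copy using an increasing $\gamma$-sequence of heights $\langle \epsilon_\beta\rangle$. Let $x_\beta$ be the point which is $0$ at all levels except $1$ at levels $\epsilon_{\beta'}$ for $\beta'<\beta$. Then $x_\beta \lex x_{\beta'}$ for $\beta<\beta'$, and $\delta(x_\beta,x_{\beta'}) = \epsilon_\beta$. So this is a canonised copy of $\gamma$ in the sense of Definition \ref{canonised:defn}, extended transfinitely. For reverse ordinals, use the symmetric coding with $1$s and $0$s swapped. To glue the $n+1$ pieces, use a few extra heights placed in a fixed relative position, for example initial separating levels. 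This gives $\xi$ of order type roughly $(\tau_0^{\pm} + 1) + \dots + (\tau_n^{\pm}+1)$ rearranged as an ordinal, where $\tau_i^\pm$ denotes the underlying ordinal of $\tau_i$. The ordinal $\xi$ will be something like $\omega\cdot(\text{sum of the underlying ordinals})$, leaving room, and it is clearly $<\kappa$.

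\textbf{Step 2: density.} This is the main obstacle. We must show that every $A \in [\twokappalex]^\tau$ contains a subcopy whose splitting pattern is ``canonical'': its tree of splittings is isomorphic to one of a bounded number of patterns, and its heights form a set that can be re-coded. We proceed as in Claim \ref{kappanincreasing:claim}. Within each ordinal piece, recursively choose points minimising the splitting level among the remaining reservoir. Note that $\tau_i < \kappa$ and $\kappa$ is regular: $\kappa$ is an initial ordinal, and if it is singular we work below it, but the pieces have size $<\kappa$, so the recursion lengths are $<\kappa$ and no choice is needed because sets of sequences indexed by ordinals are handled by minimisation. This makes the splitting heights within a piece increasing along the piece. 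The interaction between different pieces, namely which piece's heights interleave with which, is however not controllable. The fix is as follows. A copy of $\tau$ has only finitely many ``inter-piece'' configurations once each piece is canonised. We let $\xi$ be large enough to absorb every interleaving, by letting $G(X)$ consider all of the finitely many ways to interpret $X$. Better, we first colour by the finite configuration type: we use the homogeneity for $\xi$-sets, applied to colourings of $[\kappa]^\xi$ that encode the configuration via a product of finitely many colours. To do this we note that $\kappa\to(\xi)^\xi$ implies $\kappa\to(\xi')^{\xi'}_{2^m}$ for suitably smaller $\xi'$, obtained by using the extra room in $\xi$ to code finitely many colours into $2$.

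\textbf{Step 3: re-coding.} Given a canonised $A$ of a fixed configuration type with height set $X$, every order-preserving thinning $X'\in[X]^{\xi}$ that respects the block structure corresponds to a subcopy $A_{X'}\subseteq A$. This follows as in Claim \ref{kappaallsubsetsifnincreasing:claim}: select the points whose own splitting height lies in $X'$, using the canonised property that $\delta$ between a point and any later point equals the point's own level. Conversely, every subcopy of $A$ can be re-canonised to such an $A_{X'}$. Defining $G(X)=F(A_X)$ on $[\kappa]^\xi$ and taking a homogeneous $H$, every $B\in[A_H]^\tau$ shrinks to some $A_{X'}$ with $X'\subseteq H$ and colour $G(X')$. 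The remaining step is to get $F(B)$ itself, rather than only the colour of its shrinking, to be constant. To arrange this we make the shrinking map canonical, as in Observation \ref{denseisenough:obs} and coherent selectors, so that $B$ already has the form $A_{X'}$. This is ensured by choosing $A_H$ rigid enough that every subcopy is automatically canonised, which requires leaving unused gaps in $H$; this is another reason for taking $\xi$ larger than $\tau$.
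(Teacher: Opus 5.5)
There is a genuine gap, at exactly the point you flag at the end of Step 3. In your Step 1 coding the points of $A_X$ are built from $X$ itself: $x_\beta$ has $1$s precisely at the first $\beta$ elements of $X$. So $X\mapsto A_X$ does not commute with passing to subsets, and for $X'\in[X]^\xi$ one typically has $A_{X'}\not\subseteq A_X$. For example, take a single piece of type $\omega$ with $X=\omega$ and $X'=\{0,2,4,\dots\}$. The third point of $A_{X'}$ has $1$s exactly at levels $0$ and $2$, and no point of $A_X$ looks like that.

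The set you actually describe in Step 3 (the points of $A_X$ whose own splitting height lies in $X'$, as in Claim \ref{kappaallsubsetsifnincreasing:claim}) is a subset $C_{X'}\subseteq A_X$ with height set $X'$. But $C_{X'}\neq A_{X'}$, and homogeneity of $H$ for $G(X)=F(A_X)$ only controls the values $F(A_{X'})$. So you learn nothing about $F(B)$ for a general $B\in[A_H]^\tau$; indeed a proper subcopy $B$ need not contain any $A_{X'}$ at all. Leaving gaps in $H$, enlarging $\xi$, or making every subcopy ``automatically canonised'' does not help. The obstruction is not canonicity but that the code of each point depends on the whole of $X$.

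The missing idea is that for a \emph{positive} relation you may simply restrict $F$ to copies of $\tau$ inside a fixed suborder of $\twokappa$, since you only need to exhibit one homogeneous set. The density argument of Step 2, the configuration types and the $2^m$-colour coding are therefore beside the point. Density is the tool for negative relations, and splitting heights only matter for the converse, Proposition \ref{affordableconverse:prop}.

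The paper proceeds as follows.
\begin{enumerate}
\item It writes $\tau$ as an alternating sum of $\beta_0^{(*)},\dots,\beta_n^{(*)}$ and takes $\xi=\beta_0+\dots+\beta_n$.
\item From $\kappa\to(\xi)^\xi$ it derives the polarised relation $(\kappa,\dots,\kappa)\to(\beta_0,\dots,\beta_n)^{\beta_0,\dots,\beta_n}$. It does this by giving $A\in[\kappa]^\xi$ the colour of its tuple of consecutive blocks of types $\beta_0,\dots,\beta_n$. This is what formalises your ``thinnings that respect the block structure''.
\item It fixes $I_0<\dots<I_n$ in $\twokappa$ with each $I_k$ ordered as $\kappa$ or $\kappa^*$. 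Your Step 1 coding applied \emph{once} to all of $\kappa$, placed in disjoint cones, would do.
\item It transfers colourings along the fixed bijections $\iota_k:I_k\to\kappa$.
\end{enumerate}
Because the $\iota_k$ do not depend on the set being coded, subcopies of $\tau$ in $\bigcup_k\iota_k^{-1}(H_k)$ correspond exactly to elements of $[H_0]^{\beta_0}\times\dots\times[H_n]^{\beta_n}$, and homogeneity transfers directly. For this one should take the decomposition of $\tau$ with as few blocks as possible, so that every such subcopy meets each $I_k$ in a copy of $\beta_k^{(*)}$. A degenerate decomposition like $\omega^*+1$ of $\omega^*$ would break this. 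No canonisation, no extra room in $\xi$, and no regularity of $\kappa$ are needed.
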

It follows in particular that if $\omega_1 \rightarrow (\alpha)^\alpha$ for all $\alpha < \omega_1$, e.g.\ under \s{AD}, then for all countable $\tau$ with $\omega\omega^* \not \le \tau$ and $\omega^*\omega \not \le \tau$, we have $\twoomegaonelex \rightarrow (\tau)^\tau$.
\begin{proof}
    Fix $\kappa$ and $\tau$. For $\varepsilon$ an ordinal, write $\varepsilon^{(*)}$ to mean either $\varepsilon$ or $\varepsilon^*$, and say that the sum comprising $\tau$ is made up of some alternating pattern of $\beta_0^{(*)}, \beta_1^{(*)}, \dots, \beta_n^{(*)}$, with each $\beta_k$ nonzero. We set $\xi(\tau)$ to equal the ordinal sum $\beta_0 + \beta_1 + \dots + \beta_n$. 
    \begin{claim}\label{polarised:claim} \[\kappa \rightarrow (\xi)^\xi \implies\begin{pmatrix}
    \kappa \\ \kappa \\ \vdots \\ \kappa
    \end{pmatrix} \rightarrow \begin{pmatrix}
    \beta_0 \\ \beta_1 \\ \vdots \\\beta_n\end{pmatrix}^{\beta_0,\beta_1,\dots,\beta_n},\]
    where $\xi = \xi(\tau) = \beta_0 + \beta_1 + \dots + \beta_n$, as above.
    \end{claim}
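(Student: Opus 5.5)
We derive the polarised relation from $\kappa \rightarrow (\xi)^\xi$ by splitting a single $\xi$-sequence into consecutive blocks. Fix a colouring
\[F : [\kappa]^{\beta_0} \times [\kappa]^{\beta_1} \times \dots \times [\kappa]^{\beta_n} \to 2.\]
Partition $\kappa$ into $n+1$ disjoint pieces $K_0, \dots, K_n$, each of order type $\kappa$. For instance, let $K_i$ consist of the ordinals $\gamma<\kappa$ of the form $\lambda + (n+1)m + i$ with $\lambda$ a limit or $0$ and $m \in \omega$. Fix order isomorphisms $e_i : \kappa \to K_i$; these are definable, so no choice is needed.

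Given $X \in [\kappa]^\xi$, enumerate it increasingly and cut it into consecutive blocks $X_0 < X_1 < \dots < X_n$ with $\otp X_i = \beta_i$. Since $\xi = \beta_0 + \dots + \beta_n$ is an ordinal sum, this cut is uniquely determined. Define
\[G(X) \coloneqq F(e_0^{-1}\im X_0, \dots, e_n^{-1}\im X_n)?\]
This is the naive choice, but it does not work directly. The blocks must come from the separate copies, and a homogeneous $H \in [\kappa]^\xi$ for this $G$ only tells us about $\xi$-subsets of $H$. Those subsets have their blocks interleaved according to $H$'s order, not chosen independently.

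The cleaner route is to drop the $K_i$ and set
\[G(X) \coloneqq F(X_0, X_1, \dots, X_n).\]
By $\kappa \rightarrow (\xi)^\xi$, pick $H \in [\kappa]^\xi$ homogeneous for $G$, and cut $H$ into blocks $H_0 < H_1 < \dots < H_n$ with $\otp H_i = \beta_i$. Set $\bar H = \langle H_0, \dots, H_n\rangle$. The key step is to check homogeneity of $\bar H$. Given $Y_i \in [H_i]^{\beta_i}$ for each $i$, the union $Y = Y_0 \cup \dots \cup Y_n$ lies in $[H]^\xi$, because $Y_0 < Y_1 < \dots < Y_n$ and $\otp Y = \beta_0 + \dots + \beta_n = \xi$. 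The canonical block decomposition of $Y$ is exactly $\langle Y_0, \dots, Y_n\rangle$, since $\otp Y_i = \beta_i$ and the blocks are consecutive. So $F(Y_0,\dots,Y_n) = G(Y)$, which is the constant value of $G$ on $[H]^\xi$. Hence $\bar H$ is homogeneous for $F$, and $H_i \in [\kappa]^{\beta_i}$ as required.

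The only point needing care is the uniqueness of the block decomposition, i.e.\ that the blocks $Y_i$ are recovered from $Y$. This holds because initial segments of a well-order of a given type are unique: $Y_0$ is the initial segment of $Y$ of type $\beta_0$, $Y_1$ is the next segment of type $\beta_1$, and so on. This also makes $G$ well-defined without choice. No use of the separate copies $K_i$ is needed. The polarised relation asks only that each coordinate's homogeneous set lie in $\kappa$, and the sets $H_i$ being disjoint and ordered is harmless.
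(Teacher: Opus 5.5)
Your proof is correct and is essentially the paper's argument. Define $G(X)=F(X_0,\dots,X_n)$ via the unique cut of $X\in[\kappa]^\xi$ into consecutive blocks of types $\beta_0,\dots,\beta_n$, and take $H$ homogeneous for $G$; then $\langle H_0,\dots,H_n\rangle$ is homogeneous for $F$, because any $Y_i\in[H_i]^{\beta_i}$ reassemble into an element of $[H]^\xi$ whose block decomposition is $\langle Y_0,\dots,Y_n\rangle$. The opening detour through the disjoint copies $K_i$ plays no role, as you conclude yourself, and can be deleted.
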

    \begin{claimproof}
    Assume $\kappa \rightarrow (\xi)^\xi$. For $0 \le k \le n$, write $\xi_k = \sum_{i \le k} \beta_i$, so $\xi = \xi_n$. Given $F: [\kappa]^{\beta_0}\times [\kappa]^{\beta_1}\times \dots \times [\kappa]^{\beta_n} \to 2$ a colouring, observe that this induces a colouring $G_F : [\kappa]^{\xi}\to 2$ in the following way:  for $A = \langle \alpha_\gamma: \gamma < \xi \rangle \in [\kappa]^{\xi}$, set
    \[G_F(A) \coloneqq F(A \restriction \xi_0, A \restriction [\xi_0,\xi_1), \dots, A\restriction [\xi_{n-1},\xi_n)).\]
    Then since by assumption $\kappa \rightarrow (\xi)^{\xi}$, there is some $H = \langle h_\gamma : \gamma < \xi\rangle \in [\kappa]^{\xi}$ homogeneous for $G_F$; then
    \[H' \coloneqq [H \restriction \xi_0]^{\beta_0}\times [H\restriction [\xi_0,\xi_1)]^{\beta_1}\times \dots \times [H\restriction [\xi_{n-1},\xi_n)]^{\beta_n}\]
    is homogeneous for $F$.\end{claimproof}
    
    Let $I = I_0 \cup I_1 \cup \dots \cup I_n \subseteq \twokappa$ be a subset of $\twokappa$ such that $I_i < I_j$ whenever $i < j$ and each $I_k \in [\twokappalex]^{\kappa}$ or $I_k \in [\twokappalex]^{\kappa^*}$, depending on whether $\beta_k$ or $\beta_k^*$ is the $(k+1)^\text{th}$ summand in $\tau$, so $I$ is ordered as an alternating sum of $\kappa$ and $\kappa^*$. Write
    \[(I)^{(\tau)} \coloneqq \left\{A \in [\twokappalex]^\tau: \text{ for }k \le n,\, A \cap I_k \in [I_k]^{\beta_k^{(*)}}\right\},\]
    where in each case $\beta_k^{(*)}$ is interpreted as whichever of $\beta_k$, $\beta_k^{(*)}$ is the $(k+1)^\text{th}$ summand of $\tau$. Observe that if $A \in (I)^{(\tau)}$ and $A' \in [A]^\tau$, then $A' \in (I)^{(\tau)}$ also. In particular, there is a correspondence $\iota : (I)^{(\tau)} \to [\kappa]^{\beta_0}\times [\kappa]^{\beta_1}\times \dots \times [\kappa]^{\beta_n}$; writing $\iota_k : I_k \to \kappa$ for the natural bijection between $I_k$ and $\kappa$, this correspondence is given by
    \[\iota (A) \coloneqq \langle \iota_k (A \cap I_k) : k \le n \rangle;\]
    given any $A \in (I)^{(\tau)}$, $\iota$ maps the subcopies of $\tau$ in $A \in (I)^{(\tau)}$ to the subcopies of $\langle \beta_k : k \le n\rangle$ in $\iota(A)$ coherently, i.e.\ $\iota \im [A]^\tau = [\iota(A)]^{\beta_0,\beta_1,\dots,\beta_n}$. Now, given $F : [\twokappalex]^\tau \to 2$, consider the restriction of $F$ to $(I)^{(\tau)}$. This induces a colouring $G_F : [\kappa]^{\beta_0}\times [\kappa]^{\beta_1}\times \dots \times [\kappa]^{\beta_n} \to 2$ by, for $\bar{A} = \langle A_k : k \le n\rangle$,
    \[G_F(\bar{A}) = F(\iota^{-1}(\bar{A})).\]
    By claim \ref{polarised:claim},
    \[\begin{pmatrix}
    \kappa \\ \kappa \\ \vdots \\ \kappa
    \end{pmatrix} \rightarrow \begin{pmatrix}
    \beta_0 \\ \beta_1 \\ \vdots \\\beta_n\end{pmatrix}^{\beta_0,\beta_1,\dots,\beta_n},\]
    and so there is some $\bar{H} \coloneqq \langle H_k : k \le n\rangle$ homogeneous for $G_F$; then $\iota^{-1}(\bar{H})$ is homogeneous for $F$.
\end{proof}
We remark that the consistency of such relations both demonstrates a sharp contrast between the possible relations on $\twoalphalex$ for $\alpha$ countable and for $\alpha$ uncountable and gives another failure of monotonicity in the exponent; we have from the above that e.g.\ $\twoomegaonelex \rightarrow (\zeta)^\zeta$ is consistent, whereas (a) $\twoalphalex \centernot \rightarrow (\zeta)^\zeta$ for $\alpha$ countable, by Claim \ref{zetaalphacountable:claim}; (b) provably $\twoalphalex \centernot  \rightarrow(\zeta)^3$ for all $\alpha$.\footnote{e.g.\ define $F : [\twoalphalex]^3 \to 2$ by $F (\{x_0,x_1,x_2\}) = 0 \iff \delta(x_0,x_1) > \delta(x_1,x_2)$.}

We now prove a converse to the above. Let $\tau$ be an infinite order type which can be expressed as a finite sum of ordinals and reverse ordinals. We associate an ordinal $\beta(\tau)$ to $\tau$ in the following way: fix $\beta_0, \beta_1,\dots,\beta_{n-1}$ infinite ordinals such that $\tau = \beta_0 + \beta_1^* + \dots + \beta_{n-1}^{(*)}$ or $\tau = \beta_0^* + \beta_1 + \dots + \beta_{n-1}^{(*)}$. For each $i < n$, let
\[\beta_i = \omega^{\varepsilon_0^{(i)}}\cdot k_0^{(i)} + \omega^{\varepsilon_1^{(i)}}\cdot k_1^{(i)} + \dots + \omega^{\varepsilon_{m_i}^{(i)}}\cdot k_{m_i}^{(i)}\]
be the Cantor Normal Form of $\beta_i$. Then we define $\beta(\tau)$ to be the sum of the $\omega^{\varepsilon_j^{(i)}}\cdot k_j^{(i)}$, taken over all possible values of $i$ and $j$, with the summands ordered in (weakly) decreasing order; equivalently, $\beta(\tau)$ is the largest ordinal it is possible to write as a sum of the $\omega^{\varepsilon_j^{(i)}}\cdot k_j^{(i)}$.

\begin{prop}\label{affordableconverse:prop}
    Let $\kappa$ be an uncountable initial ordinal and let $\tau$ be a finite sum of ordinals and reverse ordinals all $< \kappa$. Then
    \[\twokappalex \rightarrow (\tau)^\tau \implies \kappa \rightarrow (\beta)^\beta,\]
    where $\beta = \beta(\tau)$ is the ordinal defined above.
\end{prop}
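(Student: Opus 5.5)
We prove the contrapositive. Fix $F : [\kappa]^\beta \to 2$ with no homogeneous set. We will build a dense $\D \subseteq [\twokappalex]^\tau$ and a colouring $G : \D \to 2$ with no homogeneous set, and then conclude by Observation \ref{denseisenough:obs}. As in Lemma \ref{kappa:lemma}, the idea is to send each $A$ to its set of splitting levels $S(A) \subseteq \kappa$ and to set $G(A) \coloneqq F(S(A))$. Since the levels are already ordinals below $\kappa$, we need no bijection $\mathscr B$. Write $\tau = \beta_0^{(*)} + \dots + \beta_{n-1}^{(*)}$, and for $A \in [\twokappalex]^\tau$ write $A_i$ for the piece of $A$ of type $\beta_i^{(*)}$.

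\textbf{The dense set $\D$.} We let $\D$ consist of those $A$ satisfying three conditions.
\begin{enumerate}[(i)]
\item Each $A_i$ is canonised in the strong sense of Claim \ref{kappanincreasing:claim}. If $A_i = \langle a_\gamma : \gamma < \beta_i\rangle$ is well-ordered, then $\delta(a_\gamma,a_\nu) = \delta(a_\gamma,a_{\gamma+1})$ for all $\gamma<\nu$, and these levels $\delta_\gamma$ are strictly increasing in $\gamma$. The reverse-ordered case is symmetric, with the level determined by the upper point.
\item The levels coming from different pieces are pairwise distinct.
\item Write each $\beta_i$ in Cantor normal form and decompose it into blocks of type $\omega^{\varepsilon}$, each contributing a set of levels. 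We require that whenever a block has exponent $\varepsilon$ and another has exponent $\varepsilon' < \varepsilon$, all levels of the first block lie below all levels of the second.
\end{enumerate}
Condition (iii) makes $S(A) \coloneqq \bigcup_i \{\text{levels of }A_i\}$ have order type exactly the sum of the blocks in decreasing order of exponent, namely $\beta = \beta(\tau)$. Within a single piece, Cantor normal form already lists exponents in decreasing order and the levels are increasing, so (iii) is compatible with (i).

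\textbf{Density.} This is the step I expect to be the main obstacle. First, for each piece separately, we run the reservoir construction from Claim \ref{kappanincreasing:claim}: repeatedly choose a pair realising the minimal split level in the remaining final segment, then thin out to make the levels increasing. Here $\beta_i < \kappa$ need not be regular, so the intersection of reservoirs at limit stages could be empty. I would handle this block by block. Indecomposable blocks $\omega^\varepsilon$ are indecomposable to the appropriate side, so we can always re-embed a copy of the block into a suitable final (or initial) segment of itself. Alternatively, we can pass through Hausdorff-rank induction as in Lemma \ref{finitesumchar:lemma}. Next, to obtain (iii), we process the blocks in decreasing order of exponent. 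At each step we replace the current block by a subcopy of the same type whose levels all exceed the supremum of the levels already fixed. This uses that a block is strictly indecomposable, so a far enough final segment still embeds it, together with the fact that splits deep in the tree occur at high levels. Condition (ii) then follows by a further thinning.

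\textbf{No homogeneous set.} Given $A \in \D$ and $X \in [S(A)]^\beta$, set $X_i \coloneqq X \cap (\text{levels of }A_i)$. The order type of a union is at most the natural (Hessenberg) sum of the types of the parts, and the natural sum is strictly monotone in each argument. Since $\beta = \bigoplus_i \beta_i$, it follows that $\otp X_i = \beta_i$ for every $i$. Exactly as in Claim \ref{kappaallsubsetsifnincreasing:claim}, canonisation lets us pick $A'_i \subseteq A_i$ of type $\beta_i^{(*)}$ whose levels are exactly $X_i$. Then $A' \coloneqq \bigcup_i A'_i \in [A]^\tau$ satisfies $S(A') = X$. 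Moreover $\D$ is open, because conditions (i)–(iii) pass to subcopies, so $A' \in \D$ and $G(A') = F(X)$. Since $F$ has no homogeneous set, neither does $G$, and Observation \ref{denseisenough:obs} gives $\twokappalex \centernot\rightarrow (\tau)^\tau$.
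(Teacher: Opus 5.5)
Your overall plan matches the paper's: on a dense set, pull $F$ back along a map sending $A$ to a set of splitting levels, then apply Observation \ref{denseisenough:obs}. However, two of your density claims are false, and each one marks an idea the paper needs and your proposal lacks.

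\textbf{Condition (i) is not dense.} In a totally canonised sequence the levels strictly increase. So the level at index $\omega$ exceeds infinitely many distinct levels and must be infinite. Take $\tau=\omega^2$ and let $A$ consist of the points $x_{n,k}$ ($n,k\in\omega$), where $x_{n,k}$ is $n$ ones, then a zero, then $k$ ones, then zeros. Then $A$ has type $\omega^2$ and all its splitting levels are finite. Hence no member of $[A]^{\omega^2}$ satisfies (i), even block by block. Re-embedding a block into a final segment of itself cannot help, since a subcopy only ever uses points of $A$. The paper does not claim density here; it adds a separate case. If no $A'\in[A]^\tau$ has all its indecomposable pieces totally canonised, $A$ is coloured by comparing $\delta(x_0,x_1)$ with $\delta(x_1,x_2)$ in the leftmost indecomposable piece of $A$ that is not totally canonised. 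The paper then checks that both colours occur in $[A]^\tau$. Only on the remaining part is $F$ used.

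\textbf{The raw levels cannot be arranged to have type $\beta(\tau)$.} For a canonised block, the levels of any subcopy form a subset of the block's own level set. So their supremum can never be raised by thinning: going further out in a block does not produce higher splits. Take $\tau=\omega+\omega^*$, so $\beta(\tau)=\omega\cdot 2$. Let $A_0$ be canonised of type $\omega$ with levels the even numbers, and $A_1>A_0$ canonised of type $\omega^*$ with levels the odd numbers. Then $A$ satisfies (i)--(iii), with (iii) vacuous since both exponents are $1$. Yet $S(A')$ is an infinite subset of $\omega$ for every $A'\in[A]^\tau$, so $F(S(A'))$ is undefined. Similarly, for $\tau=\omega^2+\omega^*$ with the $\omega^2$-part totally canonised and all levels of the $\omega^*$-part finite, no subcopy satisfies (iii).

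The paper's fix is to shift rather than thin. For each indecomposable piece $X$, let $\varsigma(X)$ be the supremum of its levels. Order the pieces by $\prec$ (larger type first, then left to right), and translate the levels of $X$ by $\sum_{X'\prec X}\varsigma(X')$. The resulting set $N'(A)$ has type $\beta(\tau)$ wherever the actual levels lie. A subset of type $\omega^\varepsilon$ of a set of type $\omega^\varepsilon$ is cofinal, so the offsets do not change when you pass to the subcopy realising a given $B\in[N'(A)]^{\beta(\tau)}$. With that, your natural-sum argument goes through.

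Finally, your pieces $A_i$ are not well defined at junctions $\beta_i^*+\beta_{i+1}$, where the cut can slide along an interval of type $\omega^*+\omega$. The paper fixes the cut by minimising $\delta$ across it, and restricts to those $A$ for which this minimum cannot be lowered within $[A]^\tau$.
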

We first introduce the following definition, which will be very useful in the proof:
\begin{defn}
    Let $\gamma = \omega^\varepsilon$ be an additively indecomposable ordinal and let $\alpha$ be any ordinal. A set $X \in [\twoalphalex]^\gamma$ is said to be \emph{totally canonised} if whenever $x < y < z$ are elements of $X$,
    \[\delta(x,y) = \delta(x,z).\]
    Symmetrically, for $X \in [\twoalphalex]^{\gamma^*}$, $X$ is totally canonised if the quantity $\delta(x,y)$ is determined by $y$.
\end{defn}
\begin{obs}\label{correspondingsubset:obs}
    Define $N : [\twoalphalex]^\gamma \rightharpoonup [\alpha]^\gamma$ like so: $\dom N = \{X \in [\twoalphalex]^\gamma : X\textnormal{ is totally canonised}\}$, and for $X = \{x_\upsilon : \upsilon < \gamma\} \in \dom N$,
    \[N(X) \coloneqq \{\delta(x_\upsilon,x_{\upsilon +1}) : \upsilon < \gamma\}.\] Then for all $X \in \dom N$,
    \begin{enumerate}
        \item $N(X) \in [\alpha]^\gamma$;
        \item For any $Y \in [N(X)]^\gamma$, there is an $X'_Y \in [X]^\gamma$ with $N(X'_Y) = Y$.
    \end{enumerate}
\end{obs}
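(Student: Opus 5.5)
The plan is to read off the structure of a totally canonised $X$ directly from the definition and then verify the two items in turn. Fix $X = \{x_\upsilon : \upsilon < \gamma\} \in \dom N$, enumerated increasingly. Write $d_\upsilon \coloneqq \delta(x_\upsilon, x_{\upsilon+1})$; total canonisation says exactly that $\delta(x_\upsilon, x_\mu) = d_\upsilon$ for every $\mu > \upsilon$.

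The first step is to show that $\upsilon \mapsto d_\upsilon$ is strictly increasing. Given $\upsilon < \mu$, the points $x_\upsilon, x_\mu, x_{\mu+1}$ all agree below $d_\upsilon$, since $\delta(x_\upsilon,x_\mu) = \delta(x_\upsilon,x_{\mu+1}) = d_\upsilon$. At coordinate $d_\upsilon$ we have $x_\upsilon(d_\upsilon) = 0$ and $x_\mu(d_\upsilon) = x_{\mu+1}(d_\upsilon) = 1$, because $x_\upsilon \lex x_\mu \lex x_{\mu+1}$. Hence $x_\mu$ and $x_{\mu+1}$ agree up to and including $d_\upsilon$, so $d_\mu > d_\upsilon$. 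Two consequences follow. The map $\upsilon \mapsto d_\upsilon$ is injective and order-preserving from $\gamma$ into $\alpha$, so $N(X)$ has order type exactly $\gamma$, which gives (1). Moreover, this order-isomorphism $\gamma \to N(X)$ is the inverse of the tool used in (2).

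For (2), given $Y \in [N(X)]^\gamma$, I set $X'_Y \coloneqq \{x_\upsilon : d_\upsilon \in Y\}$. Since $\upsilon \mapsto d_\upsilon$ is an order-isomorphism onto $N(X)$, the index set $U \coloneqq \{\upsilon : d_\upsilon \in Y\}$ has order type $\gamma$, so $X'_Y \in [X]^\gamma$. It remains to check that $X'_Y$ is totally canonised, so that $N(X'_Y)$ is defined, and that $N(X'_Y) = Y$. For $\upsilon < \mu$ in $U$, total canonisation of $X$ gives $\delta(x_\upsilon, x_\mu) = d_\upsilon$, which is independent of $\mu$; this settles total canonisation of $X'_Y$. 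Taking $\mu$ to be the successor of $\upsilon$ in $U$, which exists because $\gamma$ is a limit ordinal and so has no maximum, the consecutive splitting level is $d_\upsilon$. Therefore $N(X'_Y) = \{d_\upsilon : \upsilon \in U\} = Y$.

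The only step needing genuine care is the monotonicity of $d_\upsilon$. Everything else, including the fact that $N(X)$ has type $\gamma$, is a transfer along an order-isomorphism. This mirrors Claims \ref{kappanincreasing:claim} and \ref{kappaallsubsetsifnincreasing:claim}, with $\kappa$ replaced by $\gamma$ and $\mathscr B$ replaced by the identity. The $\gamma^*$ version, if needed, follows by the symmetric argument.
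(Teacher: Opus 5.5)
Your proposal is correct and follows the paper's proof: both first show that $\upsilon \mapsto \delta(x_\upsilon,x_{\upsilon+1})$ is strictly increasing, and then take $X'_Y \coloneqq \{x_\upsilon : \delta(x_\upsilon,x_{\upsilon+1}) \in Y\}$. You also make explicit two points the paper leaves implicit: why the splitting levels increase strictly (at coordinate $d_\upsilon$ both $x_\mu$ and $x_{\mu+1}$ take the value $1$), and why $X'_Y$ is itself totally canonised, so that $N(X'_Y)$ is defined.
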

\begin{proof}
    Let $X \in \dom N$. Since for $x < y < z$, $\delta(x,z) = \min\{\delta(x,y),\delta(y,z)\}$, the fact that $X$ is totally canonised gives that for any $\upsilon < \upsilon' < \gamma$, $\delta(x_\upsilon,x_{\upsilon'}) = \delta(x_\upsilon, x_{\upsilon + 1})$, and $\delta(x_\upsilon,x_{\upsilon+1}) < \delta(x_{\upsilon'},x_{\upsilon'+1})$. It follows that $N(X)$ consists of an increasing $\gamma$-sequence of ordinals $< \alpha$. Now, if $Y \in [N(X)]^\gamma$, set \[X_Y \coloneqq \{x_\upsilon : \delta(x_\upsilon,x_{\upsilon+1}) \in Y\}.\]
    Since for $\upsilon < \upsilon'$, $\delta(x_\upsilon,x_{\upsilon'})$ is determined by $\upsilon$, it follows that $N(X_Y) = Y$.
    \end{proof}
When $\gamma$ is a regular cardinal, any $X \in [\twoalphalex]^\gamma$ can be reduced to some $X' \in [X]^\gamma$ which is totally canonised; for $\gamma$ an arbitrary indecomposable ordinal, this is not necessarily the case, but we can still make use of the concept. Intuitively, the idea behind the proof of Proposition \ref{affordableconverse:prop} is the following: given a colouring $F : [\kappa]^\beta \to 2$, we define an induced colouring $G_F : [\twokappalex]^\tau \to 2$ such that the only possible homogeneous sets for $G_F$ can be divided into indecomposable pieces which are all totally canonised
, and then use the function $N$ defined above to treat such a set as a proxy for an element of $[\kappa]^\beta$; we then apply $F$ to this.
\begin{proof}[Proof of Proposition \ref{affordableconverse:prop}]
    For simplicity of notation we assume that we are in the case that $\tau$ begins with an infinite ordinal; the proof in the case that $\tau$ begins with an infinite reverse ordinal is identical. Let us write $\tau = \beta_0 + \beta_1^* + \dots + \beta_n^{(*)}$, wlog with every $\beta_i$ infinite. Let $A \in [\twokappalex]^\tau$ and wlog every condensation class of $A$ is canonised. Observe that it is almost the case that $A$ decomposes uniquely into an initial segment ordered as $\beta_0$, followed by an interval ordered as $\beta_1^*$, then one ordered as $\beta_2$, etc.: uniqueness fails between every consecutive pair of a reverse ordinal followed by an ordinal, $\beta_i^*$ and $\beta_{i+1}$, say; here there will be an interval ordered as $\omega^* + \omega$, and any consecutive $x < y$ in this interval can be chosen to be the last element of the piece of the decomposition ordered as $\beta_i^*$ and the first element of the piece ordered as $\beta_{i+1}$, respectively. We fix this problem and ensure uniqueness of our decomposition of $A$ by declaring that the consecutive pair $x < y$ will be chosen to minimise $\delta(x,y)$. In what follows we restrict our attention to those $A \in [\twokappalex]^\tau$ for which these $\delta(x,y)$ are minimal across $[A]^\tau$, i.e.\ none of them can be decreased by reducing to any $A' \in [A]^\tau$; this guarantees that whenever $A' \in [A]^\tau$, the piece of $A'$ corresponding to some $\beta_i^{(*)}$ is a subset of the piece of $A$ corresponding to $\beta_i^{(*)}$. This property is dense.
    
    Now, given this decomposition of $A \in [\twokappalex]^\tau$ into pieces ordered as ordinals or reverse ordinals, we can decompose $A$ further into (additively) indecomposable pieces by dividing the piece of $A$ corresponding to some $\beta_i^{(*)}$ into pieces corresponding to each of the indecomposable summands of the Cantor Normal Form of $\beta_i$; this is unique. Refer to the components of this further decomposition of $A$ as the \emph{indecomposable pieces} of $A$.
    
    Let $F: [\kappa]^\beta \to 2$ be a colouring. We will define an induced colouring $G_F: [\twokappalex]^\tau \rightharpoonup 2$ on a dense subset of $[\twokappalex]^\tau$ by means of a series of cases.

    First we define $G_F$ on those $A \in [\twokappalex]^\tau$ with the property that whenever $A' \in [A]^\tau$, some indecomposable piece of $A'$ is not totally canonised. For such $A$, let $X_A \coloneqq \{x_\xi : \xi < \gamma\}$ be the leftmost indecomposable piece of $A$ which is not totally canonised, and set
    \[G_F(A) \coloneqq \begin{cases*}
        0 & if $\delta(x_0,x_1) > \delta(x_1,x_2)$;\\
        1 & if $\delta(x_0,x_1) < \delta(x_1,x_2).$
    \end{cases*}\]
    We show that any such $A$ cannot be homogeneous for this. By reducing $A$ to some appropriate member of $[A]^\tau$, assume without loss of generality that for all $A' \in [A]^\tau$, the leftmost indecomposable piece of $A'$ which is not totally canonised corresponds to the same indecomposable piece of $\tau$. We can now easily reduce to an $A' \in [A]^\tau$ taking either colour: since $X$ is not totally canonised, it is possible to find $\xi_0 < \xi_1 < \xi_2$ with $\delta(x_{\xi_0},x_{\xi_1}) \neq \delta(x_{\xi_0},x_{\xi_2})$, i.e.\ $\delta(x_{\xi_0},x_{\xi_1}) > \delta(x_{\xi_1},x_{\xi_2})$; then \[A' \coloneqq A \setminus \{x_\upsilon : \upsilon <\xi_2 \text{ and }\upsilon \not \in \{\xi_0,\xi_1\}\}\]
    has $G_F(A') = 1$, as its indecomposable piece under consideration is $X \setminus \{x_\upsilon : \upsilon <\xi_2 \text{ and }\upsilon \not \in \{\xi_0,\xi_1\}\}$; meanwhile, $A'' \in [A]^\tau$ obtained by canonising the first condensation class in $X$ has $G_F(A'') = 0$.

    It follows that even with just this partial definition of $G_F$, we know that any $A \in [\twokappalex]^\tau$ which is homogeneous for $G_F$ must have some $A' \in [A]^\tau$ all of whose indecomposable pieces are totally canonised. For each $X$ an indecomposable piece of $A$, write $\varsigma(X) \coloneqq \sup \{\delta(x_\upsilon,x_{\upsilon+1}) : \upsilon < \gamma_X\}$.\footnote{We can run a similar argument to the above and define $G_F$ in such a way that for any $A$ which is homogeneous for $G_F$, the $\varsigma(X)$ are distinct, but this is unnecessary.} We will use these suprema of the splitting levels as offsets, so that we can read off an element of $[\kappa]^{\beta(\tau)}$ from $A$: for $X, Y$ indecomposable pieces of $A$, write $X \prec Y$ if either $\gamma_X > \gamma_Y$ \emph{or} if $\gamma_X = \gamma_Y$ and $X$ is to the left of $Y$ in $\twokappalex$. Then $\prec$ is a total ordering on the indecomposable pieces of $A$. Now write
    \[N'(X,A) \coloneqq \left\{\sum_{X' \prec X}\varsigma(X') + \delta(x_\vartheta,x_{\vartheta + 1}): \vartheta < \gamma_X\right\},\]
    so $N'(X,A)$ is a copy of $N(X)$ shifted by the sum of the suprema of the splitting levels of the indecomposable pieces of $A$ which come before $X$ in the order $\prec$. Set
    \[N'(A) \coloneqq \bigcup \left\{N'(X,A) : X\text{ an indecomposable piece of }A\right\}.\]
    
    Then $N'(A) \in [\kappa]^{\beta(\tau)}$. Now, finally, we are ready to define $G_F$ on these $A$; set
    \[G_F(A) \coloneqq F(N'(A)).\]
    
    Note that any $B \in [N'(A)]^{\beta(\tau)}$ has, for each $X$ an indecomposable piece of $A$, that
    \[B \cap N'(X,A) \in [N'(X,A)]^{\gamma_X}.\]
    By Observation \ref{correspondingsubset:obs}, there is then some $Y \in [X]^{\gamma_X}$ with $N'(Y,(A\setminus X)\cup Y) = B \cap N'(X,A)$; taking the union of these $Y$, it follows that for any $B \in [N'(A)]^{\beta(\tau)}$, there is $A_B \in [A]^\tau$ with $N'(A_B) = B$.

    By assumption, $\twokappalex \rightarrow (\tau)^\tau$, so there is some $H \in [\twokappalex]^\tau$ homogeneous for $G_F$. As we have shown, we may assume without loss of generality that the indecomposable pieces of $H$ are all totally canonised. We claim that $N'(H) \in [\kappa]^{\beta(\tau)}$ is homogeneous for $F$. Let $B \in [N'(H)]^{\beta(\tau)}$; then by the above, there is some $H' \in [H]^\tau$ with $N'(H') = B$. But now
    \[F(B) = G_F(H') = G_F(H) = F(N'(H)),\]
    by the homogeneity of $H$ for $G_F$. It follows that $\kappa \rightarrow (\beta(\tau))^{\beta(\tau)}$.
\end{proof}
\begin{proof}[Proof of Theorem \ref{affordable:thm}]
    Let $\tau$ be a finite sum of ordinals and reverse ordinals, $\beta_0^{(*)}, \beta_1^{(*)}, \dots \beta_n^{(0)}$, say, with each $\beta_k < \kappa$, and let $\xi_n = \beta_0 + \beta_1 + \dots + \beta_n$ as in Proposition \ref{finitesumAD:prop} and $\beta = \beta(\tau)$ the maximum possible sum of the indecomposable pieces of the $\beta_k$ as in Proposition \ref{affordableconverse:prop}. Then by those two results,
    \[\kappa \rightarrow (\xi_n)^{\xi_n} \implies \twokappalex \rightarrow (\tau)^\tau \implies \kappa \rightarrow (\beta)^\beta.\]
    Now, since $\beta \ge \xi_n$, it is also the case that
    \[\kappa \rightarrow (\beta)^\beta \implies \kappa \rightarrow (\xi_n)^{\xi_n}.\]
    It follows that
    \[\kappa \rightarrow (\xi_n)^{\xi_n} \iff \twokappalex \rightarrow (\tau)^\tau \iff \kappa \rightarrow (\beta)^\beta.\]
\end{proof}
A curious consequence of the proof of Theorem \ref{affordable:thm} is that for any $\gamma, \gamma' < \kappa$ whose Cantor Normal Forms have the same first component, we have \[\kappa \rightarrow (\gamma)^\gamma \iff \kappa \rightarrow (\gamma')^{\gamma'}\]
(letting $\gamma = \omega^{\varepsilon_0}\cdot k_0 + \vartheta$, with $\vartheta < \omega^{\varepsilon_0}$, we may set $\tau = \vartheta^*+ \omega^{\varepsilon_0}\cdot k_0$; then $\xi(\tau) = \omega^{\varepsilon_0} \cdot k_0$, and $\beta(\tau) = \gamma$, so \[\kappa \rightarrow (\omega^{\varepsilon_0}\cdot k_0)^{\omega^{\varepsilon_0}\cdot k_0} \iff \twokappalex \rightarrow (\tau)^\tau \iff \kappa \rightarrow (\gamma)^\gamma,\] and similarly for $\gamma'$).

\section{Summary and further work}\label{summaryfurtherwork:section}
\subsection{The trichotomy}\label{trichotomy:subsection}
We can now use the results of the previous sections to characterise the relation $\twoalphalex \rightarrow (\tau)^\tau$ for $\tau$ countable. Recall:
\trichotomy*
\begin{proof}
    Let $\tau$ be a countably infinite order type. If $\tau$ is of the form $\omega + k$ or $k + \omega^*$ for some $k \in \omega$, then by Theorem \ref{similartor:thm}, for any countable $\alpha \ge \omega$,
    \[\twoalphalex \rightarrow (\tau)^\tau \iff \omega \rightarrow (\omega)^\omega.\]
    If $\tau$ is not of this form but is still a finite sum of ordinals and reverse ordinals, then by Theorem \ref{affordable:thm},
    \[\twoomegaonelex \rightarrow (\tau)^\tau \iff \omega_1 \rightarrow (\beta)^\beta,\]
    where $\beta = \beta(\tau)$ is the maximal ordinal expressible as a sum of the indecomposable pieces of the Cantor Normal Forms of the ordinals constituting $\tau$ as an alternating sum, as in Proposition \ref{affordableconverse:prop}.

    Otherwise, $\tau$ is not a finite sum of ordinals and reverse ordinals, i.e.\ $\omega\omega^* \le \tau$ or $\omega^*\omega \le \tau$. Here we split into two cases: if $\tau$ is scattered, then it follows from Theorem \ref{omegaomegastar:thm} that for all $\alpha$,
    \[\twoalphalex \centernot \rightarrow (\tau)^\tau.\]
    If $\tau$ is non-scattered, then $\eta \le \tau \le \eta$, and so $\tau + \tau \le \tau$; it then follows from Theorem \ref{tauplustau:thm} that for all $\alpha$,
    \[\twoalphalex \centernot \rightarrow (\tau)^\tau.\]
\end{proof}
\subsection{Open questions etc.}\label{questions:subsection}
Theorem \ref{similartor:thm} demonstrates that the partition relations on $\twoalphalex$ for $\alpha$ countable are very similar to those on $\R$; it is natural to ask how far this analogy extends.
\begin{q}\label{analogytorrealtypes:q}
    Is it the case that for all $\tau \le \lambda$ and $\alpha \ge \omega$ countable,
    \[\Rr \rightarrow (\tau)^\tau \iff \twoalphalex \rightarrow (\tau)^\tau?\]
\end{q}
\begin{q}\label{analogytorexact:q}
    Does an analogue of \cite[Theorem 2]{ieprsonr} hold for arbitrary countable $\alpha$? That is, is it the case that if all infinite sets of reals are inexact (e.g.\ in Solovay's model and models of \s{AD}), then for all countable $\alpha$, $\twoalphalex \centernot \rightarrow (\tau)^\tau$ if $\tau$ is uncountable?
\end{q}

We can generalise Question \ref{analogytorrealtypes:q} to the following:
\begin{q}
    If $\left|\alpha\right| = \kappa$ and $\tau \le \otp\langle{}^\kappa 2,\lex\rangle$, is it necessarily the case that
    \[\twoalphalex \rightarrow (\tau)^\tau \iff\langle{}^\kappa 2,\lex\rangle \rightarrow (\tau)^\tau?\]
\end{q}

This paper, much like its prequel, focused on minimal relations $\twoalphalex \rightarrow (\tau)^\tau$. Since we have identified a whole class of order types which can consistently be the exponent in a partition relation on some $\twoalphalex$, namely finite sums of ordinals and reverse ordinals, the following problem (on which we have made some progress) is of interest:
\begin{problem}\label{largerhomog:problem}
    Classify the relation
    \[\twoalphalex \rightarrow (\sigma)^\tau\]
    for $\tau$ a finite sum of ordinals and reverse ordinals.
\end{problem}
The class of linear orders $\twoalphalex$ for $\alpha$ an ordinal is a very natural one to study, and indeed in \s{ZFC} \emph{every} linear order embeds order-preservingly into some $\twoalphalex$; we are, of course, working in \s{ZF} without Choice, by necessity, so it is not necessarily the case that every linear order embeds in some $\twoalphalex$. We therefore arrive at the following, extremely broad, project:
\begin{problem}\label{nontwoalpha:problem}
    Investigate the relation $\L \rightarrow (\sigma)^\tau_\chi$ for linear orders $\L$ which do not embed in any $\twoalphalex$.
\end{problem}
The first and second authors have proved some results in this direction, which will appear in \cite{structural}.

\subsection*{Acknowledgements}
This research was funded in whole or in part by the Austrian Science Fund (FWF) [10.55776/ESP5711024]. For open access purposes, the authors have applied a CC BY public copyright license to any author-accepted manuscript version arising from this submission. The third author was supported by the Italian PRIN 2022 ``Models, sets and classifications'', prot.\ 2022TECZJA.


\begin{thebibliography}{0000000}
        \bibitem[Ga25]{ieprsonr} L.\ A.\ Gardiner, \textit{Infinite-exponent partition relations on the real line}, arXiv:2507.12361 (2025)
        \bibitem[Gi53]{ginsburgremarks} S.\ Ginsburg, \textit{Some remarks on order types and decompositions of sets}, Transactions of the American Mathematical Society, Volume 74, no.\ 3 (1953), pp.\ 514--535
        \bibitem[GS$\infty$]{structural} L.\ A.\ Gardiner, J. Schilhan, \textit{Structural infinite-exponent partition relations and weak choice principles}, in preparation
        \bibitem[Ha08]{hausdorfforders} F.\ Hausdorff, \textit{Grundzüge einer Theorie der geordneten Mengen}, Mathematische Annalen, Volume 65 (1908), pp.\ 435--505
        \bibitem[Je73]{jechaxiomofchoice} T.\ J.\ Jech, \textit{The Axiom of Choice}, Studies in logic and the foundations of mathematics, Volume 75, North-Holland (1973)
        \bibitem[Ju68]{jullienthesis} P.\ Jullien, \textit{Contribution à l’étude des types d’ordres dispersés}, doctoral thesis, Université de Marseille (1968)
        \bibitem[Ka03]{kanamori} A.\ Kanamori, \textit{The Higher Infinite: Large Cardinals in Set Theory from Their Beginnings}, Springer, Berlin (2003)
        \bibitem[Kl77]{kleinberg} E.\ M.\ Kleinberg, \textit{Infinitary Combinatorics and the Axiom of Determinateness}, Springer-Verlag Lecture Notes in Mathematics, Volume 612 (1977)
        \bibitem[Lav71]{laver} R.\ Laver, \textit{On Fraïssé's order type conjecture}, Annals of Mathematics, Volume 93, no.\ 1 (1971), pp.\ 89--111
        \bibitem[LSW17]{crtolo} P.\ Lücke, P.\ Schlicht, T.\ Weinert, \textit{Choiceless Ramsey theory of linear orders}, Order, Volume 34 (2017), pp.\ 369--418
        \bibitem[Ma70]{mathias} A.\ R.\ D.\ Mathias, \textit{On a generalization of Ramsey's theorem}, doctoral thesis, University of Cambridge (1970)
        \bibitem[Mo59]{morelarithmetic} A.\ C.\ Morel, \textit{On the arithmetic of order types} Transactions of the American Mathematical Society, Volume 92, no.\ 1 (1959), pp.\ 48--71.
        \bibitem[Př76]{prikryadr} K.\ Příkrý, \textit{Determinateness and partitions}, Proceedings of the American Mathematical Society, Volume 54, no.\ 1 (1976), pp. 303--306
        \bibitem[Ro82]{rosenstein} J.\ G.\ Rosenstein, \textit{Linear Orderings}, Pure and Applied Mathematics, Academic Press, New York and London, Volume 98 (1982)
        \bibitem[So70]{solovay} R.\ M.\ Solovay, \textit{A model of set-theory in which every set of reals is Lebesgue measurable}, Annals of Mathematics, Volume 92, no.\ 1 (1970), pp.\ 1--56
    \end{thebibliography}
\end{document}